\documentclass[10pt]{article}
\usepackage[nohead,margin=1.0in]{geometry}
\usepackage{amssymb, amsmath, amsthm,amsfonts}
\usepackage{graphicx,epsfig}
\usepackage[tight]{subfigure}
\usepackage{cite}
\usepackage{times}
\usepackage{algpseudocode}
\usepackage{float}
\usepackage{appendix}
\usepackage{color,xcolor}
\usepackage{epstopdf}
\usepackage{mathrsfs}
\usepackage{algorithm}
\usepackage{booktabs}
\usepackage{verbatim}
\usepackage[colorlinks,linktocpage,linkcolor=blue]{hyperref}

\graphicspath{{./pics/}}
\usepackage{booktabs}
\usepackage{threeparttable}

\numberwithin{equation}{section}
\newtheorem{theorem}{Theorem}[section]
\newtheorem{remark}{Remark}[section]
\newtheorem{proposition}{Proposition}[section]
\newtheorem{lemma}{Lemma}[section]

\newtheorem{assumption}[theorem]{Assumption}
\newtheorem{example}{Example}[section]
\allowdisplaybreaks

\definecolor{gray}{RGB}{168,168,168}

\def\d{\mathrm{d} }
\def\II{(\Omega)}

\def\P01{P_\mathcal{A}}

\title{Numerical Analysis of Space-Time Dependent Source Identification in Subdiffusion Equations\thanks{The work of S. C. and Y. K. is supported by the French National Research Agency ANR and Hong Kong RGC Joint Research Scheme for the project IdiAnoDiff (grant ANR-24-CE40-7039). The work of B. J. is supported by Hong Kong RGC General Research Fund (Project
14306423 and Project 14306824), ANR / Hong Kong RGC Joint Research Scheme (A-CUHK402/24) and a start-up fund from The Chinese University of Hong Kong. The work of Z. Z. is supported by by National Natural
Science Foundation of China (Project 12422117 and Project 12426312) and Hong Kong Research
Grants Council (15302323), and an internal grant of Hong Kong Polytechnic
University (Project ID: P0053938, Work Programme: 4-ZZVA).}}
\author{Siyu Cen\thanks{Univ Rouen Normandie, CNRS, Normandie Univ, LMRS UMR 6085, F-76000 Rouen, France  (\texttt{siyu.cen@univ-rouen.fr}, \texttt{yavar.kian@univ-rouen.fr})} \and Bangti Jin\thanks{Department of Mathematics, The Chinese University of Hong Kong, Shatin, New Territories, Hong Kong, P.R. China (\texttt{b.jin@cuhk.edu.hk, bangti.jin@gmail.com})}\and
Yavar Kian\footnotemark[2] \and Zhi Zhou\thanks{Department of Applied Mathematics, The Hong Kong Polytechnic University, Kowloon, Hong Kong, China. (\texttt{zhizhou@polyu.edu.hk})}
}
\date{\today}

\begin{document}

\maketitle
\begin{abstract}
In this work, we investigate the numerical reconstruction of a space-time dependent source in a subdiffusion model from lateral boundary measurements. The proposed scheme combines the Galerkin finite element method for spatial discretization, backward Euler convolution quadrature for temporal discretization, and an easy-to-implement fixed-point iterative algorithm. We prove the linear convergence of the fixed-point iteration and derive an error estimate with an explicit dependence on the discretization parameters and the noise level. The space-time dependence of the source and the use of partial boundary data pose substantial challenges in the numerical analysis of the proposed method. The key tools to address these challenges include suitable stability estimates for the continuous inverse problem, and new refined error estimates, optimal with respect to the problem data, for the associated direct problem. Numerical experiments are provided to corroborate and complement the theoretical results.
\vskip5pt
\noindent\textbf{Key words}:
inverse source problem, subdiffusion,    numerical analysis, error estimate, fixed-point method 
\end{abstract}
\renewcommand{\d}{\mathrm{d}}
\def\II{(\Omega)}
\def\tu{\tilde{u}}

\allowdisplaybreaks[1]

\section{Introduction}\label{sec:intro}
This work is concerned with the numerical recovery of a space-time varying source in a cylindrical domain from the lateral boundary measurement. Let $\Omega=\omega\times(-\ell,\ell)\subset \mathbb{R}^d$ ($d=2,3$) and $\omega\subset \mathbb{R}^{d-1}$ be a bounded convex domain.  Consider the following initial-boundary problem for the subdiffusion model:
\begin{equation}\label{eqn:forward_eq}
 \left\{\begin{aligned}
     \partial_t^\alpha u(t,x',x_d) -\Delta u(t,x',x_d)  &=\mathcal{F}(t,x',x_d), && (t,x',x_d)=(t,x)\in   (0,T]\times\omega\times(-\ell,\ell),\\
    \partial_d u(t,x',\pm\ell)&=0,&&  (t,x')\in   (0,T]\times\omega,\\
    u(t,x',x_d)&=0,&& (t,x',x_d)\in   (0,T]\times\partial\omega\times(-\ell,\ell),\\
    u(0,x)&=0,&& x\in \Omega,
  \end{aligned}\right.
 \end{equation}
where   $\partial^{\alpha}_t u $ denotes the Caputo fractional derivative of order $\alpha\in(0,1)$ in time, defined by \cite{KilbasSrivastavaTrujillo:2006, Jin:2021book}
\begin{equation}
    \partial^{\alpha}_t u(t) = \frac{1}{\Gamma (1 - \alpha)} \int_0^t (t-s)^{-\alpha} u'(s){\rm d}s,
\end{equation}
where $\Gamma(z) = \int_0^{\infty} s^{z-1}e^{-s}\d s$ for $\Re (z)>0$ denotes Euler's Gamma function. The fractional derivative $\partial_t^\alpha u$ reduces to the classical first-order derivative $u'(t)$ as $\alpha\to 1^{-}$, and then the model \eqref{eqn:forward_eq} recovers the standard diffusion model.

The model \eqref{eqn:forward_eq} has received much attention in recent years, due to its excellent ability to describe anomalous diffusion. At the microscopic level, these transport processes are described by continuous-time random walk, with the waiting time between consecutive jumps following a heavy-tailed distribution. The probability density function of a particle's position $x$ at time $t$ satisfies the model \eqref{eqn:forward_eq}. The model has been successfully employed in various fields, including thermal diffusion in media with fractal geometry \cite{Nigmatulin:1986}, dispersion in heterogeneous aquifer \cite{AdamsGelhar:1992} and ion dispersion in column experiments \cite{HatanoHatano:1998}; see the   reviews \cite{MetzlerKlafter:2000,MetzlerJeon:2014} for the motivation and diverse applications in physics and biology. 

The inverse source problem aims to  determine the source term $\mathcal{F}$ from  boundary measurement. Due to the natural obstruction for the full recovery of a general source $\mathcal{F}$ (see, e.g. \cite[Section 1.2]{KianYamamoto:2019} and \cite[Section 1.3.1]{KianEric:2022}), we impose additional structural assumptions on the source $\mathcal{F}$, and focus on separable sources $\mathcal{F}$ of the form:
\begin{equation}\label{eqn:F=fR}
    \mathcal{F}(t,x',x_d)=f(t,x')R(t,x',x_d),\quad (t,x',x_d)\in   (0,T]\times\omega\times(-\ell,\ell),
\end{equation}
where $R$ is   known and  $f$ is unknown. Then the inverse source problem 
\textbf{(ISP)} is to recover the space-time dependent source component $f(t,x')$ from the measurement of the solution $u$ to problem \eqref{eqn:forward_eq} on the sub-boundary   $ (0,T)\times\omega\times\{\ell\} $.
Physically, \textbf{(ISP)} is motivated by source identification in anomalous diffusion processes arising in heterogeneous porous media, layered thermal materials, and biological tissues, where only partial boundary measurements are available and the source profile in one spatial direction is known \textsl{a priori}.

Inverse source problems for subdiffusion models have attracted considerable attention in the community (see, e.g., \cite{JinRundell:2015,LiuLiYamamoto:2019isp} for surveys). The majority of existing studies is devoted to the unique recovery of sources depending only on either the temporal or spatial variable \cite{ChenZhangZou:2022, JiangLi:2017,LiZhang:2020,RundellZhang:2018,Wei:2016}, and the space-time dependent sources \cite{RundellZhang:2020,JK,JannoKian:2023,KianLiuYamamoto:2023}. Also there are a few works on the stability of recovering subdiffusion sources. Early contributions were restricted to sources depending only on the time variable \cite{FujishiroKian:2016,SakamotoYamamoto:2011}, and space-dependent sources \cite{KST}. In the context of cylindrical domains, the stability analysis of \textbf{(ISP)} was first developed in \cite{KianYamamoto:2019}.  
The analysis relies on reformulating the inverse problem as an integral equation and proving its well-posedness via a contraction mapping argument.   
Then the work \cite{JinKianZhou:2021} extended the result of \cite{KianYamamoto:2019} to the model with time-dependent diffusion coefficients via a perturbation argument \cite{JinLiZhou:2019}.

Despite the extensive literature on the uniqueness and stability of \textbf{(ISP)}, relatively few works have investigated the reconstruction algorithm and error analysis. Existing studies mostly employ variational regularization, which minimizes the discrepancy between the state and the measurement, with proper regularization \cite{EnglHankeNeubauer:1996,ItoJin:2015}.  In practice,  one must discretize the continuous formulation, which inevitably incurs discretization errors. It is essential to derive error estimates for discrete reconstructions. This motivates the design of numerical algorithms with rigorous error estimates. To the best of our knowledge, the numerical reconstruction of a space-time varying source component $f(t,x')$ from boundary data with rigorous error analysis has not been addressed. The space-time dependence and boundary measurement require far more technical estimates for both the stability result and the error analysis.

In this work, we  develop a discrete reconstruction scheme for \textbf{(ISP)} and establish an error estimate. In view of the stability analysis for \textbf{(ISP)}   in \cite[Theorem 1]{KianYamamoto:2019}, we define an operator $K:L^2((0,T)\times\omega)\to L^2((0,T)\times\omega)$, 
\begin{equation*}
     Kf(t,x^\prime)= \frac{ \partial_t^\alpha u -\Delta' u  }{R}(t,x^\prime)-\frac{ \partial_{dd}u(f)}{R}(t,x^\prime),
\end{equation*}
where $\Delta'$ is the Laplacian defined on $\omega\subset\mathbb{R}^{d-1}$. The first term involves differentiating the data and the second term involves $x_d$-directional derivatives of the solution to \eqref{eqn:forward_eq}. Given the exact data $u(f^\dagger)|_{(0,T)\times\omega\times\{\ell\}}$, the exact source $f^\dagger$ is the unique fixed point of the map $K$. We prove that the operator $K$ is  contractive in a suitably defined weighted Bochner space, which directly leads to the stability result of \textbf{(ISP)} in Theorem \ref{thm:stab}. Moreover, the contraction property yields an efficient, easy-to-implement algorithm. We employ the Galerkin finite element method (FEM) and backward Euler convolution quadrature  to discretize the operator $K$. We prove the linear convergence of the iterative sequence in the finite dimensional space and establish an \textsl{a priori} error estimate for the reconstruction $f^*$. Specifically, the following error estimate holds (see Theorem \ref{thm:error})
\begin{equation}\label{eqn:err-est-main}
    \| ((f^*)^n-(f^\dagger)^n)_{n=1}^{N} \|_{ \ell^2(L^2(\omega))}\le  c(\tau^{-\alpha}\delta+\tau+ h^{-2}\delta+h^{\min(q,\frac{1}{2})}  ),
\end{equation}
where $h$, $\tau$ and $\delta$ denote the mesh size, time step size, and noise level, respectively, and $q\in(0,1]$ is the data regularity index given in Assumption \ref{assum:noise}. The error estimate \eqref{eqn:err-est-main} provides guidelines to choose the parameters $h$ and $\tau$. Our analysis relies heavily on nonstandard error estimates for the direct problem, which employ  the enhanced regularity in the $x_d$-direction, as well as sharp error bounds for the discrete fractional derivative and spatial Laplacian of low regularity noisy data. By integrating these technical tools with the stability estimate, we establish a rigorous error analysis for \textbf{(ISP)}.
Notably, the argument works for both normal diffusion  ($\alpha=1$) and subdiffusion ($\alpha\in(0,1)$).
The  analysis is supported by extensive numerical experiments in Section \ref{sec:num}.  Throughout, for the ease of the exposition, we have adopted the simplifying assumptions that the initial condition $u_0$ is zero, the elliptic operator is the negative Laplacian, and the source term $f$ satisfies a suitable compatibility condition in  Assumption \ref{assum:noise}\rm{(i)}. However, the arguments extend directly to more general settings, in which the solution may exhibit a weak singularity near $t=0^+$; see Remarks \ref{rmk:dis_noise} and \ref{rmk:error} for further discussions.

The rest of the paper is organized as follows.  In Section \ref{sec:stab}, we review the stability result for \textbf{(ISP)}. In Section \ref{sec:recon}, we propose a discrete reconstruction scheme and analyze the reconstruction error. 
Numerical experiments are presented in Section \ref{sec:num} to validate the theoretical results. 
Throughout, for any real$ s \ge 0$ and $p\ge 1$, we denote the standard Sobolev spaces of order $s$ by $W^{s,p}(\Omega)$, equipped with the norm $\|\cdot\|_{W^{s,p}(\Omega)}$, and write  $H^s(\Omega)=W^{s,2}(\Omega)$. For any set $D$, the notation $(\cdot,\cdot)_D$ denotes the $L^2(D)$ inner product, with $(\cdot,\cdot)_\Omega$ abbreviated to $ (\cdot,\cdot)$.
 For any Banach space $X$, $W^{s,p}(0,T;X)$ denotes  the Bochner space.  We denote by $c$ a generic  constant that may change at each occurrence, but is always independent of  $h$, $\tau$ and $\delta$.
 
\section{Stability estimate}\label{sec:stab} 
In this section, we establish a  stability estimate for \textbf{(ISP)}. This estimate is not only of theoretical interest; it motivates the numerical algorithm and its discretization, and forms the basis for the error analysis. We refer interested readers to the survey \cite{CJQZ:2025} and the references therein for further discussion of using (conditional) stability in the numerical analysis of PDE parameter identifications.

\subsection{Preliminaries}\label{subsec:prelim}
First, we discuss the well-posedness of  problem \eqref{eqn:forward_eq}. 
We associate the operator $-\Delta$ with a zero Dirichlet boundary condition with its domain
$D(-\Delta):=\{v\in H_0^1(\Omega) \,:\,  -\Delta v\in L^2(\Omega) \}$.
Let $-\widetilde{\Delta}$ be the operator with mixed boundary condition
$D(-\widetilde{\Delta}):=\{v\in H^1(\Omega) \,:\,  -\Delta v\in L^2(\Omega),\, \partial_d v|_{\omega\times\{ \pm\ell\} }=0,\, v|_{\partial\omega\times(-\ell,\ell)  }=0 \}$.
Let $\{\lambda_\ell\}_{\ell=1}^\infty$ and $\{\varphi_\ell\}_{\ell=1}^\infty$ (respectively  $\{\widetilde{\lambda}_\ell\}_{\ell=1}^\infty$ and $\{\widetilde{\varphi}_\ell\}_{\ell=1}^\infty$) be eigenvalues (ordered nondecreasingly with multiplicity
counted) and the $L^2(\Omega)$-orthonormal eigenfunctions of $-\Delta$ (respectively $-\widetilde{\Delta}$). For $\beta\ge 0$, we define the fractional powers:
\begin{equation*}
    (-\Delta)^\beta v=\sum_{\ell=1}^{\infty} \lambda_\ell^\beta (v,\varphi_\ell) \varphi_\ell \quad\text{and}\quad (-\widetilde{\Delta})^\beta v=\sum_{\ell=1}^{\infty} \widetilde{\lambda}_\ell^\beta (v,\widetilde{\varphi}_\ell) \widetilde{\varphi}_\ell,
\end{equation*}
with the domains $D((-\Delta)^\beta)=\{v\in L^2(\Omega): (-\Delta)^\beta v\in L^2(\Omega)\}$ and $D((-\widetilde{\Delta})^\beta )=\{v\in L^2(\Omega): (-\widetilde{\Delta})^\beta v\in L^2(\Omega)\}$. Since $\omega\subset\mathbb{R}^{d-1}$ is a convex domain,   $D(-\Delta)$ and $D(-\widetilde{\Delta})$ embed continuously into $H^{2}(\Omega)$. Thus, the domains $D((-\Delta)^\beta )$ and $D((-\widetilde{\Delta})^\beta )$  embed continuously into $H^{2\beta}(\Omega)$, for $\beta\in [0,1]$. The following resolvent estimate will be used extensively 
\begin{equation} \label{eqn:resol}
  \| (z -\Delta)^{-1} \|_{L^2\II \rightarrow L^2\II}+\| (z -\widetilde{\Delta})^{-1} \|_{L^2\II \rightarrow L^2\II}\le c_\theta |z|^{-1},  \quad \forall z \in \Sigma_{\theta},
  \,\,\,\theta\in(0,\pi),
\end{equation}
with $\Sigma_{\theta}:=\{0\neq z\in\mathbb{C}: {\rm arg}(z)\leq\theta\}$.

The solution operators $E(t)$ and $\widetilde{E}(t)$ are defined respectively by \cite[Section 6.2.1]{Jin:2021book} 
\begin{align*}
    E(t):=\frac{1}{2\pi {\rm i}}\int_{\Gamma_{\theta,\sigma}}e^{zt}  (z^\alpha -\Delta)^{-1}\, \d z \quad\mbox{and}\quad
    \widetilde{E}(t):=\frac{1}{2\pi {\rm i}}\int_{\Gamma_{\theta,\sigma}}e^{zt}  (z^\alpha -\widetilde{\Delta})^{-1}\, \d z , 
\end{align*}
with the contour $\Gamma_{\theta,\sigma}\subset \mathbb{C}$
(oriented counterclockwise) defined by
\begin{equation*}
  \Gamma_{\theta,\sigma}=\left\{z\in \mathbb{C}: |z|=\sigma, |\arg z|\le \theta\right\}\cup
  \{z\in \mathbb{C}: z=\rho e^{\pm\mathrm{i}\theta}, \rho\ge \sigma\} .
\end{equation*}
Throughout, fix $\theta \in(\frac{\pi}{2},\pi)$ so that $z^{\alpha} \in \Sigma_{\alpha\theta}
\subset \Sigma_{\theta}:=\{0\neq z\in\mathbb{C}: {\rm arg}(z)\leq\theta\},$ for all $z\in\Sigma_{\theta}$. Then the solution $u$ to problem \eqref{eqn:forward_eq} admits the following representation \cite[Section 6.2.1]{Jin:2021book}
\begin{equation}\label{eqn:solrep-u}
    u(t) =  \int_0^t \widetilde{E}(t-s) (f(s)R(s))\,\d s.
\end{equation}  

The next lemma gives smoothing properties of the solution operators $E(t)$ and $\widetilde{E}(t)$.
\begin{lemma}\label{lem:sol-op}
For all $t>0$ and  $\beta\in[0,1]$, there exists  $c>0$  independent of $t$ and $\beta$ such that 
\begin{equation*}
    t^{1-(1-\beta)\alpha}\|(-\Delta)^\beta E(t)v\|_{L^2(\Omega)}+ t^{1-(1-\beta)\alpha}\|(-\widetilde{\Delta})^\beta \widetilde{E}(t)v\|_{L^2(\Omega)} \le c \|v\|_{L^2(\Omega)}.
\end{equation*}   
\end{lemma}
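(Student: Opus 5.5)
The plan is to work directly from the contour integral representation of $E(t)$, combine the resolvent estimate \eqref{eqn:resol} with interpolation in $\beta$, and then choose the contour radius proportional to $1/t$. The argument for $\widetilde{E}(t)$ is word for word the same, using the resolvent bound for $-\widetilde{\Delta}$ in \eqref{eqn:resol}. So we only describe the case of $E(t)$.

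\emph{Step 1: a resolvent bound for $(-\Delta)^\beta(z^\alpha-\Delta)^{-1}$.} For $z\in\Sigma_\theta$ we have $z^\alpha\in\Sigma_{\alpha\theta}\subset\Sigma_\theta$, so \eqref{eqn:resol} gives
\begin{equation*}
\|(z^\alpha-\Delta)^{-1}\|_{L^2\II\to L^2\II}\le c|z|^{-\alpha}.
\end{equation*}
Writing $-\Delta(z^\alpha-\Delta)^{-1}=I-z^\alpha(z^\alpha-\Delta)^{-1}$ yields
\begin{equation*}
\|(-\Delta)(z^\alpha-\Delta)^{-1}\|_{L^2\II\to L^2\II}\le 1+c.
\end{equation*}
For $\beta\in(0,1)$ we interpolate between these two bounds. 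Since $-\Delta$ is self-adjoint and positive, the eigen-expansion makes this concrete: we need to bound $\sup_\ell \lambda_\ell^\beta|z^\alpha+\lambda_\ell|^{-1}$. Because $|z^\alpha+\lambda|\ge c_\theta(|z|^\alpha+\lambda)$ for $z^\alpha$ in the sector $\Sigma_{\alpha\theta}$, we get
\begin{equation*}
\lambda^\beta|z^\alpha+\lambda|^{-1}\le c\,\lambda^\beta\big(|z|^{\alpha}+\lambda\big)^{-1}\le c\,|z|^{-(1-\beta)\alpha},
\end{equation*}
where the last step uses $\lambda^\beta\le(|z|^\alpha+\lambda)^\beta$. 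The constant here is uniform in $\beta\in[0,1]$. This gives
\begin{equation*}
\|(-\Delta)^\beta(z^\alpha-\Delta)^{-1}\|_{L^2\II\to L^2\II}\le c\,|z|^{-(1-\beta)\alpha}.
\end{equation*}
The same reasoning applies to $-\widetilde{\Delta}$, since the eigen-expansion argument only needs self-adjointness and nonnegativity.

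\emph{Step 2: contour estimate.} Apply $(-\Delta)^\beta$ inside the integral defining $E(t)$. This is justified by the absolute convergence the bound below provides, since $(-\Delta)^\beta$ is closed. Take $\sigma=1/t$. On the rays $z=\rho e^{\pm i\theta}$ with $\rho\ge 1/t$, we have $|e^{zt}|=e^{\rho t\cos\theta}$ with $\cos\theta<0$, so
\begin{equation*}
\int_{1/t}^\infty e^{\rho t\cos\theta}\rho^{-(1-\beta)\alpha}\,\d\rho .
\end{equation*}
Substituting $s=\rho t$ turns this into
\begin{equation*}
t^{(1-\beta)\alpha-1}\int_1^\infty e^{s\cos\theta}s^{-(1-\beta)\alpha}\,\d s\le c\,t^{(1-\beta)\alpha-1}.
\end{equation*}
The remaining integral is bounded by $\int_1^\infty e^{s\cos\theta}\,\d s$, independently of $\beta$.

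On the arc $|z|=1/t$ we have $|e^{zt}|\le e$ and arc length $2\theta/t$. The integral is therefore bounded by
\begin{equation*}
c\,t^{-1}\,t^{(1-\beta)\alpha}.
\end{equation*}
Summing the two contributions gives $\|(-\Delta)^\beta E(t)v\|_{L^2\II}\le c\,t^{(1-\beta)\alpha-1}\|v\|_{L^2\II}$, which is the claim.

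The only delicate point is keeping the constant independent of $\beta$. This is handled by the sector inequality $|z^\alpha+\lambda|\ge c_\theta(|z|^\alpha+\lambda)$ in Step 1, with $c_\theta$ depending only on $\alpha\theta<\pi$, together with the $\beta$-uniform bound on the $s$-integral in Step 2. The validity of the representation of $E(t)$ with $\sigma=1/t$ follows from Cauchy's theorem, since the integrand is analytic off the negative real axis.
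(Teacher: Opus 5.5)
Your proof is correct and follows essentially the paper's route: the contour argument with $\sigma=1/t$ and the sectorial bound \eqref{eqn:resol}, which is what underlies the endpoint estimates the paper cites from \cite[Theorem 6.4]{Jin:2021book}, combined with interpolation in $\beta$. The only difference is the order of the steps: you interpolate at the resolvent level through the spectral bound $\lambda^\beta|z^\alpha+\lambda|^{-1}\le c|z|^{-(1-\beta)\alpha}$ and then integrate over the contour, whereas the paper interpolates between the $\beta=0$ and $\beta=1$ bounds for $E(t)$. Your version is therefore self-contained and makes the $\beta$-independence of $c$ explicit.
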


\begin{proof}
The estimates in the case $(- \Delta )$, with $\beta=0,1$, can be found in \cite[Theorem 6.4]{Jin:2021book}. For $\beta\in (0,1)$, the result follows by interpolation. The  case $(- \widetilde{\Delta} )$ follows similarly from the resolvent estimate \eqref{eqn:resol}. 
\end{proof}

Then we have the following well-posedness result for \eqref{eqn:forward_eq} \cite[Theorem 1.2, Lemma 2.1]{KianYamamoto:2019}. 
\begin{theorem}\label{thm:sol-reg}
Let  $R,\partial_d R \in L^\infty(0,T;L^\infty(\Omega))$ and $f\in L^2(0,T;L^2(\omega))$. Then there exists a unique solution $u$ to problem \eqref{eqn:forward_eq} satisfying $u\in H^3(-\ell,\ell; L^2( (0,T)\times \omega ))$ and $\partial_t^\alpha u,\mathcal{A}u\in H^1(-\ell,\ell; L^2( (0,T)\times \omega ))$.   
\end{theorem}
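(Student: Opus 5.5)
Since $u(0)=0$, the plan is to regard \eqref{eqn:forward_eq} as an abstract evolution problem in $L^2(\Omega)$ driven by $-\widetilde{\Delta}$ and then to gain regularity in $x_d$ by separating variables in the cylinder. \textbf{Existence and uniqueness.} Because $\mathcal{F}=fR\in L^2(0,T;L^2(\Omega))$, the representation \eqref{eqn:solrep-u} defines $u$. Lemma \ref{lem:sol-op} with $\beta=1$ gives $\|\widetilde{\Delta}\widetilde{E}(t)\|\le ct^{-1}$, which alone is not integrable. We therefore invoke the standard maximal $L^2$-regularity of the subdiffusion problem, which follows from the resolvent estimate \eqref{eqn:resol} and the Laplace transform/Plancherel argument. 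This yields $\partial_t^\alpha u,\ \widetilde{\Delta}u\in L^2(0,T;L^2(\Omega))$. Uniqueness follows because the homogeneous problem has only the trivial solution, as seen from the eigenfunction expansion in $\widetilde{\varphi}_\ell$.

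\textbf{Regularity in $x_d$.} The eigenfunctions of $-\widetilde{\Delta}$ factor as $\phi_k(x')\cos(\tfrac{m\pi}{2\ell}(x_d+\ell))$, where $\phi_k$ are the Dirichlet eigenfunctions on $\omega$. Writing $\psi_m(x_d)=\cos(\tfrac{m\pi}{2\ell}(x_d+\ell))$, we expand $u=\sum_m u_m(t,x')\psi_m(x_d)$, and each $u_m$ solves $\partial_t^\alpha u_m-\Delta' u_m+\mu_m u_m=F_m$ with $\mu_m=(m\pi/2\ell)^2$ and $F_m=(fR,\psi_m)_{(-\ell,\ell)}$. Maximal regularity on $\omega$ is uniform in $m$ because $\mu_m\ge0$, giving $\|\partial_t^\alpha u_m\|+\|\Delta' u_m\|+\mu_m\|u_m\|\le c\|F_m\|$ in $L^2((0,T)\times\omega)$. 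The key point is that $f$ does not depend on $x_d$ and $\partial_dR\in L^\infty$. Hence $\partial_d(fR)=f\partial_dR\in L^2$, and an integration by parts gives $F_m=-\mu_m^{-1/2}(f\partial_dR,\tilde\psi_m)$ with the sine basis $\tilde\psi_m$. Bessel's inequality then gives $\sum_m\mu_m\|F_m\|^2\le c\|f\|^2_{L^2}\|\partial_dR\|^2_{L^\infty}$. Multiplying the estimate for $u_m$ by $\mu_m^{1/2}$ and summing, we obtain $\sum_m \mu_m^3\|u_m\|^2<\infty$, which is the statement $u\in H^3(-\ell,\ell;L^2((0,T)\times\omega))$. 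The same summation gives $\sum_m\mu_m(\|\partial_t^\alpha u_m\|^2+\|\Delta'u_m\|^2)<\infty$, i.e.\ $\partial_t^\alpha u$ and $\mathcal{A}u$ (which I read as the $x'$-part $-\Delta'u$, or the full operator) lie in $H^1(-\ell,\ell;L^2((0,T)\times\omega))$.

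\textbf{Main obstacle.} The hardest step is justifying that $H^3$ in $x_d$ is correctly captured by the cosine coefficients. Here one must verify that the odd derivatives satisfy the appropriate compatibility: $\partial_du$ vanishes at $\pm\ell$, so $\partial_d u$ expands in sines and $\partial_{dd}u$ again in cosines, and $\sum_m\mu_m^3\|u_m\|^2$ controls $\|\partial_d^3u\|$. The integration by parts for $F_m$ must also be justified, since it produces no boundary terms because $\tilde\psi_m(\pm\ell)=0$. Apart from this bookkeeping, the argument reduces to uniform-in-$m$ maximal regularity, which follows from \eqref{eqn:resol} applied to the shifted operators $-\Delta'+\mu_m$.
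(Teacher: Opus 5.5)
Your argument is correct, but it takes a different route from the one behind the paper's statement.

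The paper gives no proof here and defers to \cite[Theorem 1.2, Lemma 2.1]{KianYamamoto:2019}. Within the paper's framework, the natural argument works with $w=\partial_d u$ directly. The function $w$ vanishes on all of $\partial\Omega$: on $\omega\times\{\pm\ell\}$ by the Neumann condition, and on $\partial\omega\times(-\ell,\ell)$ as a tangential derivative of the Dirichlet data. It solves \eqref{eqn:dudx} with source $f\partial_dR\in L^2$. Maximal $L^2$-regularity together with $H^2$-elliptic regularity on the convex domain $\Omega$ then gives $w\in L^2(0,T;H^2(\Omega))$ and $\partial_t^\alpha w\in L^2$. Hence $\partial_d^3u=\partial_d^2w\in L^2$ and $\partial_d\partial_t^\alpha u=\partial_t^\alpha w\in L^2$.

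Your partial cosine expansion is the spectral counterpart of this. The integration by parts $F_m=-\mu_m^{-1/2}(f\partial_dR,\tilde\psi_m)$ says exactly that the sine coefficients of $\partial_du$ are driven by those of $f\partial_dR$. Bessel's inequality then plays the role of ``$f\partial_dR\in L^2$''.

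\textbf{What your version buys.} It is more self-contained. It sidesteps a point any $w$-based proof must settle: that $\partial_du$ is regular enough to be identified with the unique solution of \eqref{eqn:dudx}, or that $u$ can be recovered from $w$. It needs no elliptic regularity of $\Omega$. The uniformity in $\mu_m\ge0$ is transparent, since $-\Delta'+\mu_m$ is self-adjoint and nonnegative.

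\textbf{What the $w$-route buys.} It is shorter once maximal regularity is granted. It also delivers the full bound $w\in L^2(0,T;H^2(\Omega))$, including mixed derivatives. That is the form the paper actually uses afterwards, in Lemma \ref{lem:dudx} and in the $H^{2\gamma}(\Omega)$ trace bound on $\partial_dw$ in Theorem \ref{thm:stab}.

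Finally, the step you flag as the main obstacle is easier than you suggest. For the direction you need, $\sum_m\mu_m^3\|u_m\|^2<\infty$ already implies $u\in H^3(-\ell,\ell;L^2((0,T)\times\omega))$ by term-by-term differentiation of the cosine and sine series. No compatibility condition has to be verified; the relation $\partial_du(\pm\ell)=0$ is an output, not an input. Both readings of $\mathcal{A}u$ are also covered, because your estimate controls $(-\Delta'+\mu_m)u_m$ directly.
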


\subsection{Stability estimate}\label{subsec:stab}
Now we present a stability result for \textbf{(ISP)}, which forms the basis for the numerical analysis in Section~\ref{sec:recon}. A related stability estimate was proved in \cite[Theorem~1.2]{KianYamamoto:2019}. We provide a substantially different proof based on a contraction argument in the weighted Bochner space
$L^2_\lambda(0,T;L^2(\Omega)):=L^2\!\bigl((0,T),e^{-\lambda t}\,\d t;L^2(\Omega)\bigr)$.
The analysis not only motivates the proposed iterative solver and numerical scheme, but also informs the convergence analysis. Note that the  map is not necessarily a contraction in the space $L^2(0,T;L^2(\Omega))$, and thus the proof strategy of \cite[Theorem~1.2]{KianYamamoto:2019} cannot be directly adapted to the convergence and error analysis in Section~\ref{sec:recon}.

\begin{assumption}\label{assum:reg} 
  $f  \in  L^{2}((0,T)\times\omega)$ and $R,\,\partial_d R\in L^\infty((0,T)\times\Omega)$.   Further, there exists $c_R>0$ such that $ |R(t,x',\ell)| \ge c_R$, for all $(t,x')\in (0,T)\times\omega$.
\end{assumption}

The non-vanishing condition in Assumption \ref{assum:reg} is crucial for deriving the stability estimate. By Theorem \ref{thm:sol-reg},  the solution $ u\in H^3(-\ell,\ell; L^2((0,T)\times \omega))$. Thus the solution $u$ admits higher regularity in the $x_d$-direction. This motivates the $x_d$-directional derivative $w:=\partial_d u$, which satisfies
\begin{equation}\label{eqn:dudx}
        \left\{\begin{aligned}
            \partial_t^\alpha w -\Delta w  &=f\partial_d R , &&  \mbox{in }   (0,T]\times\Omega,\\
            w(t,x)&=0,&&   \mbox{on }   (0,T]\times\partial \Omega,\\ 
            w(0,x)&=0,&&  \mbox{on } \Omega.
        \end{aligned}\right.
\end{equation}

The next lemma provides pointwise-in-time regularity estimates of $u$ and $w$.
\begin{lemma}\label{lem:dudx}
    Let  Assumption \ref{assum:reg} hold, $u$ be the solution to \eqref{eqn:forward_eq} and $w:=\partial_d u$. Then there exists $c>0$ depending only on $R$ and $T$ such that  
    \begin{equation*}
        \|u(t)\|_{H^1(\Omega)}+\|w(t)\|_{H^1(\Omega)}\le c\int_0^t (t-s)^{\frac{\alpha}{2}-1} \|f(s)\|_{L^2(\Omega)}   \d s.
    \end{equation*}  
\end{lemma}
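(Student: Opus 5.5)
We use the solution representations together with Lemma \ref{lem:sol-op} at $\beta=\tfrac12$, and then pass from the fractional-power norms to the $H^1(\Omega)$ norm. The function $u$ is given by \eqref{eqn:solrep-u}. The function $w=\partial_d u$ solves \eqref{eqn:dudx}, a homogeneous Dirichlet problem with source $f\partial_d R$ and zero initial data, so by the same Laplace transform argument
\begin{equation*}
  w(t)=\int_0^t E(t-s)\bigl(f(s)\partial_d R(s)\bigr)\,\d s .
\end{equation*}
Before using this formula we will check that $w$ vanishes on the whole boundary. On $\partial\omega\times(-\ell,\ell)$ the condition $u=0$ gives $\partial_d u=0$, since $\partial_d$ is a tangential derivative there. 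On $\omega\times\{\pm\ell\}$ the Neumann condition gives $w=0$ directly. The equation for $w$ follows by differentiating \eqref{eqn:forward_eq} in $x_d$, using that $f$ does not depend on $x_d$. This is justified by the $x_d$-regularity in Theorem \ref{thm:sol-reg}, and uniqueness for the Dirichlet problem then identifies $w$ with the Duhamel formula above.

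Next we bound the $H^1(\Omega)$ norms by the fractional powers. Since $D((-\Delta)^{1/2})=H_0^1(\Omega)$ with equivalent norms,
\begin{equation*}
  \|v\|_{H^1(\Omega)}\le c\|(-\Delta)^{1/2}v\|_{L^2(\Omega)} \quad\text{for } v\in H^1_0(\Omega).
\end{equation*}
Similarly, $D((-\widetilde\Delta)^{1/2})$ is the form domain $\{v\in H^1(\Omega): v|_{\partial\omega\times(-\ell,\ell)}=0\}$. On this space Poincaré's inequality holds because of the Dirichlet part of the boundary, so $\|v\|_{H^1(\Omega)}\le c\|(-\widetilde\Delta)^{1/2}v\|_{L^2(\Omega)}$; equivalently, $\widetilde\lambda_1>0$. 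We then take norms inside the integrals and apply Lemma \ref{lem:sol-op} with $\beta=\tfrac12$, which gives $\|(-\Delta)^{1/2}E(t)\|\le c\,t^{\alpha/2-1}$ and the same bound for $\widetilde E$. This yields
\begin{equation*}
\|u(t)\|_{H^1(\Omega)}\le c\int_0^t (t-s)^{\frac\alpha2-1}\|f(s)R(s)\|_{L^2(\Omega)}\,\d s\le c\|R\|_{L^\infty}\int_0^t (t-s)^{\frac\alpha2-1}\|f(s)\|_{L^2(\Omega)}\,\d s,
\end{equation*}
and the same estimate for $w$ with $\|\partial_d R\|_{L^\infty}$ in place of $\|R\|_{L^\infty}$. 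Here $\|f(s)\|_{L^2(\Omega)}$ means the norm of $f$ extended constantly in $x_d$, which equals $(2\ell)^{1/2}\|f(s)\|_{L^2(\omega)}$. All constants depend only on $R$, $T$ and $\Omega$.

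The main obstacle is making sure the Bochner integral $\int_0^t (-\Delta)^{1/2}E(t-s)(\cdot)\,\d s$ converges and commutes with the closed operator $(-\Delta)^{1/2}$. The kernel $(t-s)^{\alpha/2-1}$ is integrable, so for $f\in L^2(0,T;L^2(\omega))$ the right-hand side is finite for almost every $t$, by Young's inequality for convolutions. Closedness of $(-\Delta)^{1/2}$ then allows the interchange, first for smooth $f$ and then for general $f$ by density, using the well-posedness in Theorem \ref{thm:sol-reg}.
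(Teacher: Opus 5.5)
Your proposal is correct and follows the paper's route. You write $u$ and $w=\partial_d u$ as Duhamel integrals with $\widetilde{E}$ and $E$, apply Lemma \ref{lem:sol-op} with $\beta=\frac12$, and use that $\|(-\Delta)^{1/2}\cdot\|_{L^2(\Omega)}$ and $\|(-\widetilde{\Delta})^{1/2}\cdot\|_{L^2(\Omega)}$ are equivalent to the $H^1(\Omega)$ norm on the respective form domains; the paper states this as ``direct from Lemma \ref{lem:sol-op}'', and your write-up supplies the details it omits.
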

\begin{proof}
    The estimate is direct from Lemma \ref{lem:sol-op}; see also \cite[Lemma 2.1]{KianYamamoto:2019}.
\end{proof}

Next, we establish the stability for \textbf{(ISP)}. For $\lambda>0$ and $p\in [1,\infty]$, we define a weighted norm in $L^2(0,T)$ by 
\begin{equation}\label{eqn:weighted_norm}
\|v\|_{L^2_\lambda(0,T)} =
\Big(\int_0^T |e^{-\lambda t } v(t)|^2 \d t\Big)^{\frac{1}{2}}.
\end{equation}
The  $L_\lambda^2(0,T)$ norm is equivalent to the standard $L^2(0,T)$ norm. For a Banach space $X$, we denote by $\|v\|_{L^2_\lambda(0,T;X)} $ the weighted norm for the vector-valued space $L^2_\lambda(0,T;X)$.
\begin{theorem}\label{thm:stab} 
Let  $R$ satisfy Assumption \ref{assum:reg} and fix $f_i\in L^2(0,T;L^2(\omega)),\,i=1,2$. Let $u_i$ be the solution to problem \eqref{eqn:forward_eq} with $f=f_i$. Then there exists  $c=c(\alpha,\gamma,c_R,R,T)>0$ such that  
    \begin{equation*}
        \|f_1-f_2\|_{L^2(0,T;L^2(\omega))}\le c\|u_1-u_2\|_{L^2(0,T;H^2(\omega))}+ c\|\partial_t^\alpha (u_1-u_2)\|_{L^2(0,T;L^2(\omega))}.
    \end{equation*} 
\end{theorem}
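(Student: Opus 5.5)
By linearity we set $f=f_1-f_2$, $u=u_1-u_2$ and $w=\partial_d u$. Then $u$ solves \eqref{eqn:forward_eq} with source $fR$, and $w$ solves \eqref{eqn:dudx}. The norms on the right-hand side are taken on the trace $x_d=\ell$, i.e.\ on $(0,T)\times\omega\times\{\ell\}$.

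\textbf{Step 1: a pointwise identity for $f$.} Evaluate the equation at $x_d=\ell$:
\begin{equation*}
f(t,x')R(t,x',\ell)=\partial_t^\alpha u(t,x',\ell)-\Delta' u(t,x',\ell)-\partial_{dd}u(t,x',\ell).
\end{equation*}
This trace is legitimate because $u\in H^3(-\ell,\ell;L^2((0,T)\times\omega))$ and $\partial_t^\alpha u\in H^1(-\ell,\ell;\cdot)$ by Theorem \ref{thm:sol-reg}. Since $\partial_d u(\cdot,\pm\ell)=0$, we have $w(t,x',-\ell)=0$, and hence
\begin{equation*}
\partial_{dd}u(t,x',\ell)=\partial_d w(t,x',\ell)=\int_{-\ell}^{\ell}\partial_{dd}w(t,x',s)\,\d s .
\end{equation*}
The Neumann data give $w(\pm\ell)=0$, and the equation for $w$ gives $\partial_{dd}w=\partial_t^\alpha w-\Delta' w-f\partial_dR$. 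Using the bound $|R(\cdot,\ell)|\ge c_R$ from Assumption \ref{assum:reg}, we get the fixed-point form $f=Kf$. Here $Kf=\text{data term}-\partial_{dd}u(f)(\ell)/R$, and the data term is bounded in $L^2$ by the right-hand side of the theorem.

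\textbf{Step 2: contraction in $L^2_\lambda$.} It remains to bound $\|\partial_d w(\cdot,\ell)\|_{L^2_\lambda(0,T;L^2(\omega))}$ by $\epsilon(\lambda)\|f\|_{L^2_\lambda(0,T;L^2(\omega))}$ with $\epsilon(\lambda)\to0$ as $\lambda\to\infty$. The trace $\partial_d w(\ell)$ requires more than $H^1$ regularity. I would use a trace estimate in $x_d$ with intermediate smoothness:
\begin{equation*}
\|\partial_d w(t,\cdot,\ell)\|_{L^2(\omega)}\le c\|w(t)\|_{H^{3/2+\epsilon}(\Omega)}\le c\|(-\Delta)^{\beta}w(t)\|_{L^2(\Omega)},\qquad \beta=\tfrac34+\tfrac{\epsilon}{2}<1.
\end{equation*}
I would then apply Lemma \ref{lem:sol-op} to the representation $w(t)=\int_0^tE(t-s)f(s)\partial_dR(s)\,\d s$. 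This gives
\begin{equation*}
\|\partial_d w(t)(\ell)\|_{L^2(\omega)}\le c\int_0^t(t-s)^{(1-\beta)\alpha-1}\|f(s)\|_{L^2(\omega)}\,\d s .
\end{equation*}
Here $\|f(s)\partial_d R(s)\|_{L^2(\Omega)}\le c\|f(s)\|_{L^2(\omega)}$ because $f$ is independent of $x_d$. The kernel $k(t)=t^{\gamma-1}$ with $\gamma=(1-\beta)\alpha>0$ is integrable.

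Multiplying by $e^{-\lambda t}$ and applying Young's convolution inequality gives
\begin{equation*}
\|\partial_d w(\ell)\|_{L^2_\lambda}\le c\,\|e^{-\lambda t}t^{\gamma-1}\|_{L^1(0,T)}\,\|f\|_{L^2_\lambda}\le c\,\Gamma(\gamma)\lambda^{-\gamma}\|f\|_{L^2_\lambda}.
\end{equation*}
We choose $\lambda$ so large that $c\,c_R^{-1}\Gamma(\gamma)\lambda^{-\gamma}\le\frac12$. Absorbing this term into the left-hand side yields
\begin{equation*}
\|f\|_{L^2_\lambda}\le 2c_R^{-1}\bigl(\|\partial_t^\alpha u(\ell)\|_{L^2}+\|\Delta'u(\ell)\|_{L^2}\bigr).
\end{equation*}
The equivalence of the $L^2_\lambda$ and $L^2$ norms, with constants depending on $\lambda$ and $T$, then gives the claim. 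The constant depends on $\gamma$ through the choice of $\beta$.

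\textbf{Main obstacle.} The delicate point is justifying the trace bound for $\partial_d w$ at $x_d=\ell$ via $D((-\Delta)^\beta)\hookrightarrow H^{2\beta}(\Omega)$ with $2\beta>3/2$. A cleaner way around this is to write $\partial_d w(\ell)=\int_{-\ell}^{\ell}\partial_{dd}w\,\d s$ together with an interpolation in $x_d$. Alternatively, one can use $\|\partial_d w(\ell)\|^2\le c\|\partial_d w\|_{L^2}\|\partial_d w\|_{H^1}$ and split the kernel singularities as $t^{\alpha/2-1}$ and $t^{-1}$. The latter singularity is non-integrable, so I would prefer the fractional-power route, which keeps the exponent $\gamma>0$.
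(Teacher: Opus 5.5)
Your proposal is correct and follows essentially the same route as the paper. You evaluate the equation at $x_d=\ell$ and bound the trace of $\partial_d w=\partial_{dd}u$ by $\|(-\Delta)^{\beta}w(t)\|_{L^2(\Omega)}$ with $2\beta>\frac32$; your $\beta$ is the paper's $\gamma\in(\frac34,1)$. You then apply Lemma~\ref{lem:sol-op} to get the kernel $(t-s)^{(1-\beta)\alpha-1}$ and absorb via Young's inequality in $L^2_\lambda$ with $\lambda$ large. The one slip is the side identity $\partial_d w(\ell)=\int_{-\ell}^{\ell}\partial_{dd}w\,\d s$: it would require $\partial_d w(-\ell)=0$, not $w(-\ell)=0$, but your main argument never uses it.
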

\begin{proof}
   First, we reformulate problem \eqref{eqn:forward_eq} as 
\begin{equation}\label{eqn:f-repre}
f_i=\frac{\partial_t^\alpha u_i-\Delta u_i}{R}=\frac{\partial_t^\alpha u_i-\Delta' u_i-\partial_dw_i}{R},\quad i=1,2,
\end{equation}
where $\Delta'$ denotes the Laplacian defined on  $\omega\subset\mathbb{R}^{d-1}$ and $w_i=\partial_d u_i$ solves \eqref{eqn:dudx}.
Let $f=f_1-f_2$, $u=u_1-u_2$ and $w=w_1-w_2$. Restricting the identity \eqref{eqn:f-repre} to the boundary $(0,T)\times\omega\times\{\ell\}$ and taking difference of the equation for $i=1,2$ lead to  
    \begin{equation*}
        \|f(t)\|_{L^2(\omega)}\le c\|\partial_t^\alpha u(t)\|_{L^2(\omega)}+ c\| u(t)\|_{H^2(\omega)}+ c\|\partial_d w(t)\|_{L^2(\omega)}
    \end{equation*}
Meanwhile, for $\gamma\in (\frac{3}{4},1)$, the trace inequality \cite[Theorem 1.5.1.7]{Grisvard:1985}, and Lemmas \ref{lem:sol-op} and \ref{lem:dudx} imply  
    \begin{align*}
        \|\partial_d w(t)\|_{L^2(\omega)}\le& \int_0^t \| \partial_d E(t-s)( fR ) (s)\|_{L^2(\omega)} \d s \\
        \le& c\int_0^t \| E(t-s) \|_{L^2(\Omega)\to H^{2\gamma}(\Omega)} \|f(s)\|_{L^2(\omega)}\d s\\
        \le & c\int_0^t (t-s)^{(1-\gamma)\alpha-1} \|f(s)\|_{L^2(\omega)} \d s.
    \end{align*}
    Consequently,
    \begin{align*}
        \|f(t)\|_{L^2(\omega)}\le  c\|  u(t) \|_{H^2(\omega)}+c\|  \partial_t^\alpha u(t) \|_{L^2(\omega)}   + c \int_0^t (t-s)^{(1-\gamma)\alpha-1} \|f(s)\|_{L^2(\omega)}  \d s.
    \end{align*}
    Taking the weighted norm $\|\cdot\|_{L^2_\lambda(0,T;L^2(\omega))}$ on both sides gives
    \begin{align*}
        \|f\|_{L^2_\lambda(0,T;L^2(\omega))}^2\le& c\|u\|_{L^2_\lambda(0,T;H^2(\omega))}^2+ c\|\partial_t^\alpha u\|_{L^2_\lambda(0,T;L^2(\omega))}^2\\
        &+c\int_0^T e^{-2\lambda t}\Big(\int_0^t (t-s)^{(1-\gamma)\alpha-1} \|f(s)\|_{L^2(\omega)}  \d s\Big)^2 \d t.
    \end{align*}
    Now Young's inequality for convolution leads to 
    \begin{align*}
        &\int_0^T e^{-2\lambda t}\left(\int_0^t (t-s)^{(1-\gamma)\alpha-1} \|f(s)\|_{L^2(\omega)}  \d s\right)^2 \d t\\
        =&\int_0^T \left(  \int_0^t e^{- \lambda (t-s)}(t-s)^{(1-\gamma)\alpha-1}  e^{-\lambda s}\|f(s)\|_{L^2(\omega)}  \d s\right)^2 \d t\\
        \le & c \left(\int_0^T e^{-\lambda t} t^{(1-\gamma)\alpha-1} \d t\right)^2\int_0^T |e^{-\lambda t}\|f(t)\|_{L^2(\omega)} |^2\d t\\
        \le & c \lambda^{-2(1-\gamma)\alpha}\|f\|_{L^2_\lambda(0,T;L^2(\omega))}^2,
    \end{align*}
   using the inequality
    \begin{equation}\label{eqn:int_ett}
        \int_0^T e^{-\lambda t} t^{(1-\gamma)\alpha-1} \d t= \lambda^{-(1-\gamma)\alpha}\int_0^{\lambda T} e^{-y} y^{(1-\gamma)\alpha-1} \d y\le  \lambda^{-(1-\gamma)\alpha} \Gamma((1-\gamma)\alpha).
    \end{equation}
    Hence, there exists $c>0$ independent of $\lambda$ such that 
\begin{equation*}
\|f\|_{L^2_\lambda(0,T;L^2(\omega))}
 \le c\|u\|_{L^2_\lambda(0,T;H^2(\omega))}+ c\|\partial_t^\alpha u\|_{L^2_\lambda(0,T;L^2(\omega))}  +c\lambda^{-(1-\gamma)\alpha}\|f\|_{L^2_\lambda(0,T;L^2(\omega))}.
\end{equation*} 
    By taking a sufficiently  large $\lambda$ such that $c\lambda^{-(1-\gamma)\alpha}\le \frac{1}{2}$, we arrive at 
    \begin{equation*}
        \|f\|_{L^2_\lambda(0,T;L^2(\omega))}\le c\|u\|_{L^2_\lambda(0,T;H^2(\omega))}+ c\|\partial_t^\alpha u\|_{L^2_\lambda(0,T;L^2(\omega))}.
\end{equation*}
This and the equivalence of the norms $\|\cdot\|_{L^2(0,T;L^2(\Omega))}$ and $\|\cdot\|_{L_\lambda^2(0,T;L^2(\Omega))}$ give the desired estimate.
\end{proof}

Theorem \ref{thm:stab} motivates an iterative scheme for recovering the space-time dependent source:
 \begin{equation}\label{eqn:recon}
        f^{k+1}(t,x^\prime)= \frac{ \partial_t^\alpha u -\Delta' u  }{R}(t,x^\prime) - \frac{ \partial_{d}w(f^k)}{R}(t,x^\prime),
    \end{equation}
where  $w(f^k)=\partial_d u(f^k)$ satisfies \eqref{eqn:dudx}. The first term of \eqref{eqn:recon} can be computed explicitly from the measured data $z^\delta$, and the second term requires solving  equation \eqref{eqn:dudx} iteratively. 
Next, we prove a convergence rate for the iteration scheme \eqref{eqn:recon}  in the $L^2_\lambda(0,T;L^2(\omega))$ norm. Throughout, we denote by $f^\dagger$ the exact source and by $u^\dagger=u(f^\dagger)$ the exact state.
\begin{proposition}\label{prop:recon}
Let  $f^\dagger$ and $R$ satisfy Assumption \ref{assum:reg}. Fix $f^0\in L^2(0,T;L^2(\omega))$. Then, for any sufficiently large $\lambda$, the sequence $\{f^k\}_{k\ge0}$ generated by the scheme \eqref{eqn:recon} converges to $f^\dagger$ in $L_\lambda^2(0,T;L^2(\omega))$ and there exists $c>0$ depending only on $\alpha$, $\gamma$, $c_R$, $R$ and $T$ such that
    \begin{equation*}
        \|f^k-f^\dagger\|_{L_\lambda^2(0,T;L^2(\omega))}\le \big(c\lambda^{-(1-\gamma)\alpha}\big)^k  \|f^0-f^\dagger\|_{L_\lambda^2(0,T;L^2(\omega))}.
    \end{equation*}
\end{proposition}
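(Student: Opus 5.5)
The plan is to read \eqref{eqn:recon} as a fixed-point iteration for an affine map and to reuse the contraction estimate from the proof of Theorem \ref{thm:stab}. Since the data are exact, $u=u^\dagger=u(f^\dagger)$ on $(0,T)\times\omega\times\{\ell\}$. Restricting the identity \eqref{eqn:f-repre} for $f^\dagger$ to $x_d=\ell$ gives
\begin{equation*}
f^\dagger=\frac{\partial_t^\alpha u^\dagger-\Delta' u^\dagger}{R}-\frac{\partial_d w(f^\dagger)}{R}.
\end{equation*}
So $f^\dagger$ is a fixed point of the iteration. Subtracting this from \eqref{eqn:recon}, the data terms cancel. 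By linearity of the map $f\mapsto w(f)$ (problem \eqref{eqn:dudx} is linear in its source), we obtain
\begin{equation*}
f^{k+1}-f^\dagger=-\frac{\partial_d w(f^k-f^\dagger)}{R}\Big|_{x_d=\ell}.
\end{equation*}

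Next, set $e^k=f^k-f^\dagger$. Using $|R(t,x',\ell)|\ge c_R$ from Assumption \ref{assum:reg}, we get $\|e^{k+1}(t)\|_{L^2(\omega)}\le c_R^{-1}\|\partial_d w(e^k)(t)\|_{L^2(\omega)}$. We then repeat the argument in the proof of Theorem \ref{thm:stab} verbatim, with $\gamma\in(\frac34,1)$ fixed. The ingredients are the representation $w(t)=\int_0^t E(t-s)(e^k\partial_dR)(s)\,\d s$, the trace inequality from $H^{2\gamma}(\Omega)$ (with $2\gamma-1>\frac12$) for $\partial_d$ onto $\omega\times\{\ell\}$, and Lemma \ref{lem:sol-op} with $\beta=\gamma$. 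Together with $\partial_dR\in L^\infty$ and $\|e^k(s)\|_{L^2(\Omega)}\le c\|e^k(s)\|_{L^2(\omega)}$ (the cylinder has finite height), these give
\begin{equation*}
\|e^{k+1}(t)\|_{L^2(\omega)}\le c\int_0^t (t-s)^{(1-\gamma)\alpha-1}\|e^k(s)\|_{L^2(\omega)}\,\d s.
\end{equation*}

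Then we take the weighted norm. Writing $e^{-\lambda t}=e^{-\lambda(t-s)}e^{-\lambda s}$, Young's convolution inequality and \eqref{eqn:int_ett} yield
\begin{equation*}
\|e^{k+1}\|_{L^2_\lambda(0,T;L^2(\omega))}\le c\,\Gamma((1-\gamma)\alpha)\lambda^{-(1-\gamma)\alpha}\|e^k\|_{L^2_\lambda(0,T;L^2(\omega))}.
\end{equation*}
Here $c$ depends only on $\alpha,\gamma,c_R,R,T$. Induction on $k$ gives the stated bound. Choosing $\lambda$ large so that $c\lambda^{-(1-\gamma)\alpha}<1$ makes the factor less than one, so $f^k\to f^\dagger$ in $L^2_\lambda$, and hence in $L^2$ by equivalence of the norms.

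The only delicate step is the trace and smoothing bound for $\partial_d w$ on $\omega\times\{\ell\}$, since it needs an integrable singularity $(t-s)^{(1-\gamma)\alpha-1}$. This works precisely because $\gamma<1$, while $\gamma>\frac34$ keeps the trace of $\partial_d w$ well defined. The bound was already established within the proof of Theorem \ref{thm:stab}, so I would cite it rather than reprove it. One should also check that each iterate lies in $L^2(0,T;L^2(\omega))$; this follows from the same bound by induction, starting from $f^0\in L^2$.
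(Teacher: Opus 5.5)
Your proposal is correct and follows essentially the same route as the paper. The paper defines the affine map $K$ and uses the pointwise-in-time bound from the proof of Theorem~\ref{thm:stab}, together with Young's convolution inequality, to show that $K$ is a contraction in $L^2_\lambda(0,T;L^2(\omega))$ with factor $c\lambda^{-(1-\gamma)\alpha}$; it then applies this with $f_1=f^\dagger$ and $f_2=f^k$, so your use of the linearity of $f\mapsto w(f)$ to write $e^{k+1}=-\partial_d w(e^k)/R$ directly is simply a restatement of that contraction step.
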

\begin{proof}
    We define an operator $K\colon  L^2(0,T;L^2(\omega))\to  L^2(0,T;L^2(\omega))$ by 
    \begin{equation*}
        Kf(t,x^\prime)= \frac{ \partial_t^\alpha u -\Delta' u  }{R}(t,x^\prime)-\frac{ \partial_{d}w(f)}{R}(t,x^\prime).
    \end{equation*}
By Assumption \ref{assum:reg} and Theorem \ref{thm:sol-reg}, the operator $K$ is well-defined. We claim that $K$ is a contraction map in the norm $\|\cdot\|_{L^2_\lambda(0,T;L^2(\omega)) }$ for any large $\lambda$. Indeed, for $f_1,f_2\in  L^2(0,T;L^2(\omega))$,  the argument in Theorem \ref{thm:stab} implies 
    \begin{align*}
        \|(Kf_1-Kf_2)(t)\|_{L^2(\omega)}\le c \int_0^t (t-s)^{(1-\gamma)\alpha-1}\|(f_1-f_2)(s)\|_{L^2(\omega)}\d s.
    \end{align*}
    By multiplying the weight function $e^{-\lambda t}$ and integrating over the interval $(0,T)$, we arrive at
    \begin{align*}
        \|Kf_1-Kf_2\|_{L^2_\lambda(0,T;L^2(\omega))}^2
        \le & c\int_0^T e^{-2\lambda t}\Big(\int_0^t (t-s)^{(1-\gamma)\alpha-1} \|(f_1-f_2)(s)\|_{L^2(\omega)}  \d s\Big)^2 \d t\\
        \le &  c \lambda^{-2(1-\gamma)\alpha}\|f_1-f_2\|_{L^2_\lambda(0,T;L^2(\omega))}^2,
    \end{align*}
where $c>0$ is independent of $\lambda$. Clearly, $f^\dagger$ is a fixed point of $K$ in $\mathcal{Q}$. By taking $f_1=f^\dagger$ and $f_2=f^k$, we obtain
    \begin{equation*}
        \|f^\dagger-f^{k+1}\|_{L^2_\lambda(0,T;L^2(\omega))}\le c \lambda^{-(1-\gamma)\alpha}\|f^\dagger-f^{k}\|_{L^2_\lambda(0,T;L^2(\omega))}.
    \end{equation*}
    Now taking $\lambda$ sufficiently large such that $c\lambda^{-(1-\gamma)\alpha}<1$ gives the desired result.    
\end{proof}
\begin{remark}\label{rmk:varied_conductivity}
Theorem \ref{thm:stab} and  Proposition \ref{prop:recon} rely on the cylindrical  geometry of the domain $\Omega$ and the relation $\Delta=\Delta'+\partial_{dd}$. The argument can be extended to elliptic operators of the form $\mathcal{A}u(x)=-\sum_{i,j=1}^d \partial_i(a_{ij}(x)\partial_j u(x))+q(x)u(x)$, with mild additional structural assumptions on  $a_{ij}$ \cite{KianYamamoto:2019,JinKianZhou:2021}. Since the proofs of Theorem \ref{thm:stab} and Proposition \ref{prop:recon} mainly exploit the smoothing property of the solution operator $E(t)$, the same argument also extends readily to the case $\alpha=1$. 
\end{remark}

\section{Numerical scheme and error estimate}\label{sec:recon}
In this section, we propose a practical iterative algorithm and provide an error analysis of the scheme. 

\subsection{Numerical scheme for the forward problem}\label{subsec:forward_err}
First, we describe a fully discrete scheme for problem \eqref{eqn:forward_eq}. For the time discretization,  we divide the interval $[0, T]$ into $N$ uniform subintervals with a time step size $\tau = T/N$ and let $\{t_n=n\tau\}_{n=0}^{N}$ be the time grids. We employ convolution quadrature generated by the backward Euler scheme (BECQ) \cite[Chapter 3]{JinZhou:2023book} to approximate the fractional derivative $\partial_t^\alpha u(t_n)$ (with $u^n:=u(t_n)$):
\begin{equation}\label{eqn:BECQ}
    \bar{\partial}_\tau^\alpha u^n:=\tau^{-\alpha}\sum_{j=0}^{n} \omega_j^{(\alpha)}(u^{n-j}-u^0), \quad \text{with }(1-\xi)^\alpha=\sum_{j=0}^{\infty}\omega_j^{(\alpha)}.
\end{equation}
For the space discretization, we employ the Galerkin finite element method (FEM) \cite[Chapter 2]{JinZhou:2023book}. Let $\mathcal{T}_h$ be a quasi-uniform simplicial triangulation of the domain $\Omega$ with a mesh size $h$. Over the triangulation $\mathcal{T}_h$, we define a continuous piecewise linear finite element space $V_h$ by 
$V_h=\{\varphi_h\in H^1(\Omega): \varphi_h|_{T} \text{ is a linear function }\forall T\in \mathcal{T}_h\}$.
Let $\widetilde{V}_h^0=\{\varphi_h\in V_h: \varphi_h=0 \text{ on } \partial\omega\times(-\ell,\ell) \}$ and $V_{h}^{0}=\{\varphi_h\in V_h: \varphi_h=0 \text{ on } \partial\Omega  \}$.
The following inverse inequality holds on the space $V_h$ \cite[Lemma 4.5.3]{Brenner:2008}: for $0\le  s_2\le s_1 \le 1$ and $1\le p_1,p_2\le \infty$, we have
\begin{align}\label{eqn:inverse_ineq}
	\|\varphi_h\|_{W^{s_1,p_1}\II}\le ch^{s_2-s_1+\frac{d}{p_1}-\frac{d}{p_2}}\| \varphi_h\|_{W^{s_2,p_2}\II},\quad \forall\varphi_h\in V_h.
\end{align}
For all $T\in \mathcal{T}_h$, the following trace inequalities hold \cite[Lemmas 1.46 and 1.49]{Pietro:2012book}
\begin{align}\label{eqn:dis_trace_ineq}
\|\varphi_h\|_{L^2(\partial T)}&\le c h^{-\frac{1}{2}}\|\varphi_h\|_{L^2(T)},\quad \forall \varphi_h\in V_h,\\
\label{eqn:cts_trace_ineq}
\|v\|_{L^2(\partial T)} & \le   c(h^{\frac12}\|\nabla v\|_{L^2(T)}+ h^{-\frac12}\|v\|_{L^2(  T)})  ,\quad \forall v\in H^1(T),
\end{align}
with $c$ independent of $h$ and $T$. Let $\mathcal{I}_h\colon C(\overline{\Omega})\rightarrow V_h$ be the Lagrange nodal interpolation operator. Then for $s=1,2$ and $1\le p\le \infty $ (with $sp>d$ if $p>1$ and $sp\ge d $ if $p=1$)  \cite[Corollary 4.4.20]{Brenner:2008}: 
\begin{align}\label{eqn:error_I_h}
	\| v-\mathcal{I}_h v \|_{L^p\II}+h\|\nabla(v-\mathcal{I}_h v)\|_{L^p\II}\le ch^s\| v \|_{W^{s,p}\II},\quad\forall v\in W^{s,p}\II.
\end{align}
We define the $L^2(\Omega)$-projection $P_h\colon L^2\II\rightarrow  V_h$ by
\begin{align*}
	(P_h v,\varphi_h)=( v,\varphi_h),\quad\forall\varphi_h\in V_h.
\end{align*}
Then for every $s\in [1,2]$,  we have  \cite[p. 32]{Thomee:2006}
\begin{align}\label{eqn:error_P_h}
	\| v-P_h v\|_{L^2\II}+h\|\nabla(v-P_h v)\|_{L^2\II}\le ch^s\| v\|_{H^s\II},\quad\forall v \in H^s\II.
\end{align}

Next, we define the discrete Laplacians $-\Delta_h\colon V_h^{0}\to V_h^{0}$ and $-\widetilde{\Delta}_h\colon \widetilde{V}_h^{0}\to \widetilde{V}_h^{0}$ by 
\begin{align*}
    (-\Delta_h v_h,\varphi_h)&= (\nabla v_h, \nabla \varphi_h),\quad \forall v_h,\varphi_h\in V_h^{0},\\
    (-\widetilde{\Delta}_h v_h,\varphi_h)&=-(\partial_n v_h,\varphi_h)_{ \omega\times\{\pm\ell\} }+(\nabla v_h, \nabla \varphi_h),\quad \forall v_h,\varphi_h\in \widetilde{V}_h^{0},
\end{align*} 
where $n$ denotes the unit outward normal vector to the boundary $\partial\Omega$ and $(\cdot,\cdot)_{ \omega\times\{\pm\ell\} }$  denotes the $L^2( \omega\times\{\pm\ell\} )$ inner product. 

Let $R^n=R(t_n,\cdot)$. Then a fully discrete scheme for  problem \eqref{eqn:forward_eq} reads: given $u_h^0=0$, find $u_h^n\in \widetilde{V}_h^0 $ for $n=1,\dots,N$ such that 
\begin{equation}\label{eqn:fully-dis-u}
    (\bar{\partial}_\tau^\alpha u_h^n,\varphi_h)+ (  \nabla  u_h^n,\nabla \varphi_h) - (\partial_n u_h,\varphi_h)_{ \omega\times\{\pm\ell\} }=(f^n R^n ,\varphi_h),\quad\forall \varphi_h\in \widetilde{V}_h^0.
\end{equation} 
Using the $L^2(\Omega)$-projection $P_h$ and the discrete  operator $-\widetilde{\Delta}_h$,  the scheme \eqref{eqn:fully-dis-u} can be written as 
\begin{equation*}
    \bar{\partial}_\tau^\alpha u_h^n -\widetilde{\Delta}_hu_h^n =P_h(f^n R^n).
\end{equation*}
The fully discrete scheme for \eqref{eqn:dudx} reads: given $w_h^0=0$, find $w_h^n\in V_h^{0}$ for $n=1,\dots,N$ such that 
\begin{equation}\label{eqn:fully-dis-w}
    \bar{\partial}_\tau^\alpha w_h^n -\Delta_hw_h^n = P_h(f^n\partial_d R^n).
\end{equation} 
Using the discrete Laplace transform, $u_h^n$ and $w_h^n$ can be represented by  \cite[Chapter 3]{JinZhou:2023book}:
\begin{equation}\label{eqn:dis-solrep}
    u_h^n=\tau \sum _{j=1}^{n} \widetilde{E}_{h,\tau}^{n-j}P_h (f^j R^j) \quad\text{and}\quad w_h^n=\tau \sum _{j=1}^{n} E_{h,\tau}^{n-j}P_h (f^j \partial_d R^j),
\end{equation}
where the discrete solution operators $E_{h,\tau}^n$ and $\widetilde{E}_{h,\tau}^n$ are respectively defined by 
\begin{equation*} 
    E_{h,\tau}^n=\frac{1}{2\pi \mathrm{i}}\int_{\Gamma_{\theta,\sigma}^\tau}e^{z t_n}\big( \delta_\tau( e^{-z \tau})^\alpha -\Delta_h \big)^{-1}\d z\quad \text{and}\quad \widetilde{E}_{h,\tau}^n=\frac{1}{2\pi \mathrm{i}}\int_{\Gamma_{\theta,\sigma}^\tau}e^{z t_n}\big( \delta_\tau( e^{-z \tau})^\alpha -\widetilde{\Delta}_h \big)^{-1}\d z,
\end{equation*}
with the kernel function $\delta_\tau(\xi)=(1-\xi)/\tau$ and the contour $\Gamma_{\theta,\sigma}^\tau:=\{z\in \Gamma_{\theta,\sigma} : |\Im (z)|\le \pi /\tau \} $ with $\theta\in (\frac{\pi}{2},\pi)$ close to $\frac{\pi}{2}$ (oriented counterclockwise). Note that for any $\theta\in (\frac{\pi}{2},\pi)$, there exists $\theta^\prime\in (\frac{\pi}{2},\pi)$ such that for all  $z\in \Gamma_{\theta,\sigma}^\tau$  \cite[Lemma 3.1]{JinZhou:2023book}
\begin{equation}\label{eqn:delta_tau}
    \delta_\tau(e^{-z\tau})\in \Sigma_{\theta^\prime },\quad  c_1|z|\le |\delta_\tau(e^{-z\tau})|\le c_2|z|,\quad |\delta_\tau(e^{-z\tau})^\alpha-z^\alpha|\le c_3 \tau|z|^{1+\alpha},
\end{equation}
with $c_1,c_2,c_3>0$ independent of $\tau$. 
By the coercivity of $(-\Delta_h)$,  \eqref{eqn:delta_tau} and interpolation inequality,  we have for $\beta\in[0,1]$,
\begin{equation} \label{eqn:dis-resol}
  \| (-\Delta_h)^{\beta}(\delta_\tau(e^{-z\tau}) -\Delta_h)^{-1} \|_{L^2\II \rightarrow L^2(\Omega)} \le c |z|^{\beta-1},  \quad \forall z \in  \Gamma_{\theta,\sigma}^\tau,
\end{equation}
where $(-\Delta_h)^\beta$ is defined by spectral decomposition. Now, we describe smoothing properties for the discrete  operators $ E_{h,\tau}^n$ and $\widetilde{E}_{h,\tau}^n$ \cite[Lemma 4.6]{ZhangZhangZhou:2022}.
\begin{lemma}\label{lem:dis-sol-op}
For the discrete solution operators $E_{h,\tau}^n$ and $\widetilde{E}_{h,\tau}^n$, there exists  $c>0$ independent of $h$, $\tau$ and $t_n$ such that
\begin{equation*}
    t_{n+1}^{1-(1-\beta)\alpha}\|(-\Delta_h)^\beta E_{h,\tau}^n \varphi_h\|_{L^2(\Omega)}+t_{n+1}^{1-(1-\beta)\alpha}\|(-\widetilde{\Delta}_h)^\beta \widetilde{E}_{h,\tau}^n \varphi_h\|_{L^2(\Omega)} \le c \|\varphi_h\|_{L^2(\Omega)}.
\end{equation*} 
\end{lemma}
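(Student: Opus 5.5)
The plan is to work directly from the contour-integral representation of $E_{h,\tau}^n$ and bound the integrand with the discrete resolvent estimate \eqref{eqn:dis-resol} together with the properties \eqref{eqn:delta_tau} of the kernel $\delta_\tau(e^{-z\tau})$. The first step is to replace the Laplacian by its discrete version. Since $z\in\Gamma^\tau_{\theta,\sigma}$ gives $\delta_\tau(e^{-z\tau})\in\Sigma_{\theta'}$ and $|\delta_\tau(e^{-z\tau})|\sim|z|$, we get $\delta_\tau(e^{-z\tau})^\alpha\in\Sigma_{\alpha\theta'}$ with modulus comparable to $|z|^\alpha$. The estimate \eqref{eqn:dis-resol} was written with $\delta_\tau$ in place of $\delta_\tau^\alpha$; applying the same coercivity/spectral argument to $-\Delta_h$ (self-adjoint, positive definite) with the spectral parameter $\delta_\tau(e^{-z\tau})^\alpha$ gives
\begin{equation*}
\|(-\Delta_h)^\beta(\delta_\tau(e^{-z\tau})^\alpha-\Delta_h)^{-1}\|_{L^2(\Omega)\to L^2(\Omega)}\le c|z|^{-(1-\beta)\alpha},\qquad z\in\Gamma^\tau_{\theta,\sigma},\ \beta\in[0,1].
\end{equation*}
This follows from $\sup_{\mu>0}\mu^\beta|w+\mu|^{-1}\le c|w|^{\beta-1}$ for $w\in\Sigma_{\alpha\theta'}$, which holds because $\alpha\theta'<\pi$. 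The same bound holds for $-\widetilde\Delta_h$. That operator is also symmetric and coercive on $\widetilde V_h^0$: the boundary term in its definition vanishes for discrete functions in the Galerkin sense, and the Dirichlet condition on the lateral boundary gives a Poincaré inequality.

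The second step is to insert this bound into the contour integral, choosing $\sigma=t_{n+1}^{-1}$ (legitimate, since $\Gamma^\tau_{\theta,\sigma}$ can be deformed by analyticity and the $2\pi\mathrm{i}/\tau$-periodicity in $z$, as in \cite[Chapter 3]{JinZhou:2023book}). On the circular arc, $|e^{zt_n}|\le e$ and the arc length is $\sim\sigma$, so the arc contributes $c\,\sigma^{1-(1-\beta)\alpha}=c\,t_{n+1}^{(1-\beta)\alpha-1}$. On the rays $z=\rho e^{\pm\mathrm{i}\theta}$ with $\sigma\le\rho\le \pi/(\tau\sin\theta)$, we have $|e^{zt_n}|=e^{\rho t_n\cos\theta}$ with $\cos\theta<0$. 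This gives
\begin{equation*}
\int_\sigma^{c/\tau} e^{-c\rho t_n}\rho^{-(1-\beta)\alpha}\,\d\rho .
\end{equation*}

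The main obstacle is that $t_n$ rather than $t_{n+1}$ appears in the exponent, which matters for $n=0$. For $n\ge1$ we have $t_n\ge t_{n+1}/2$, and the substitution $s=\rho t_{n+1}$ bounds the integral by $c\,t_{n+1}^{(1-\beta)\alpha-1}\Gamma(1-(1-\beta)\alpha)$. This gamma factor stays bounded uniformly in $\beta\in[0,1]$ as long as $(1-\beta)\alpha\le\alpha<1$; for the case $\alpha=1$, $\beta=0$ the truncation $\rho\le c/\tau$ handles it. For $n=0$ there is no exponential decay, but the contour is truncated at $\rho\sim 1/\tau$, so the integral is at most $c\,\tau^{(1-\beta)\alpha-1}=c\,t_1^{(1-\beta)\alpha-1}$ (when $(1-\beta)\alpha<1$), which is the desired bound.

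Multiplying through by $t_{n+1}^{1-(1-\beta)\alpha}$ gives the stated estimate, with a constant independent of $h$, $\tau$, $n$ (and of $\beta$, given the uniform gamma bound). Alternatively, one could prove the cases $\beta=0,1$ and interpolate, exactly as in the proof of Lemma~\ref{lem:sol-op}.
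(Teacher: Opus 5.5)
The paper does not prove this lemma; it cites \cite[Lemma 4.6]{ZhangZhangZhou:2022}. Your direct contour-integral argument is correct, and it is the standard proof behind such estimates. It bounds $(-\Delta_h)^\beta(\delta_\tau(e^{-z\tau})^\alpha-\Delta_h)^{-1}$ by $c|z|^{-(1-\beta)\alpha}$ using the sector containing $\delta_\tau(e^{-z\tau})^\alpha$, deforms to $\sigma=t_{n+1}^{-1}$ by periodicity, and treats $n=0$ through the truncation at $|\Im z|=\pi/\tau$. What it buys over the citation is self-containedness. You also rightly use the spectral parameter $\delta_\tau(e^{-z\tau})^\alpha$, whereas \eqref{eqn:dis-resol} is written with $\delta_\tau(e^{-z\tau})$. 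For $n=0$ the generating function gives the cross-check $E_{h,\tau}^0=\tau^{-1}(\tau^{-\alpha}-\Delta_h)^{-1}$ directly.

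There are two small corrections.

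First, the ray estimate does not need the Gamma function. Since $\rho\ge\sigma=t_{n+1}^{-1}$, the substitution $s=\rho t_{n+1}$ gives $s\ge 1$. The ray integral is therefore at most $c\,t_{n+1}^{(1-\beta)\alpha-1}\int_1^\infty e^{-cs}\,\d s$, uniformly in $\beta\in[0,1]$ and also for $\alpha=1$. Your claim that the upper truncation $\rho\le c/\tau$ rescues the case $\alpha=1$, $\beta=0$ gives the wrong reason. The blow-up of $\Gamma(1-(1-\beta)\alpha)$ comes from $s\to 0$, and that region is excluded by the lower endpoint, not the upper one.

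Second, the boundary term $-(\partial_n v_h,\varphi_h)_{\omega\times\{\pm\ell\}}$ in the paper's definition of $-\widetilde\Delta_h$ does not vanish for piecewise linear $v_h$. If it is kept, the operator is in general not symmetric, so your $\sup_{\mu>0}$ spectral bound would not apply. The same goes for the spectral definition of $(-\widetilde\Delta_h)^\beta$. What your argument actually uses is the standard Galerkin operator $(-\widetilde\Delta_h v_h,\varphi_h)=(\nabla v_h,\nabla\varphi_h)$ on $\widetilde V_h^0$, with the Neumann condition imposed naturally. That operator is self-adjoint, and positive definite by the Poincar\'e inequality coming from the lateral Dirichlet condition. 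State explicitly that this is the operator you work with, rather than asserting that the extra term vanishes.
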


The next lemma gives a  discrete Sobolev embedding.
\begin{lemma}\label{lem:dis_embedding}
For $\gamma\in(\frac{3}{4},1)$, there exists $c>0$ independent of $h$ such that  the element-wise derivative $\partial_d v_h$ of $v_h$ satisfies
\begin{equation*}
    \|\partial_d v_h\|_{L^2(\omega)}\le c\|(-\Delta_h)^{\gamma}v_h\|_{L^2(\Omega)}, \quad \forall v_h\in V_h^0.
\end{equation*}
\end{lemma}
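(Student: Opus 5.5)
The idea is to compare $v_h$ with a continuous function that has $H^{2\gamma}$ regularity, and to handle the remainder with elementwise trace and inverse inequalities. The threshold $\gamma>\frac34$ is exactly what makes $2\gamma-\frac32>0$, so that the $h$-powers balance.

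\textbf{Step 1 (continuous comparison function).} Fix $v_h\in V_h^0$ and set $g:=(-\Delta_h)^\gamma v_h\in V_h^0$. Define
\begin{equation*}
v:=(-\Delta)^{-\gamma}g,
\end{equation*}
so that $\|v\|_{H^{2\gamma}(\Omega)}\le c\|(-\Delta)^\gamma v\|_{L^2(\Omega)}=c\|g\|_{L^2(\Omega)}$, using $D((-\Delta)^\gamma)\hookrightarrow H^{2\gamma}(\Omega)$ as recalled in Section~\ref{subsec:prelim}. Since $2\gamma>\frac32$, the trace theorem \cite[Theorem 1.5.1.7]{Grisvard:1985} gives
\begin{equation*}
\|\partial_d v\|_{L^2(\omega\times\{\ell\})}\le c\|v\|_{H^{2\gamma}(\Omega)}\le c\|g\|_{L^2(\Omega)}.
\end{equation*}

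\textbf{Step 2 (splitting and trace/inverse estimates).} Split
\begin{equation*}
\partial_d v_h=\partial_d(v_h-P_hv)+\partial_d(P_hv-v)+\partial_d v .
\end{equation*}
The last term is handled by Step~1. The first two terms are controlled elementwise on the layer of elements $T$ with a face on $\omega\times\{\ell\}$.

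For the discrete piece, apply \eqref{eqn:dis_trace_ineq} to the piecewise constant $\partial_d(v_h-P_hv)$, then the inverse inequality \eqref{eqn:inverse_ineq}. This gives
\begin{equation*}
\|\partial_d(v_h-P_hv)\|_{L^2(\omega)}\le ch^{-\frac12}\|\nabla(v_h-P_hv)\|_{L^2(\Omega)}\le ch^{-\frac32}\|v_h-P_hv\|_{L^2(\Omega)}.
\end{equation*}

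For the projection error, apply \eqref{eqn:cts_trace_ineq} to $\nabla(P_hv-v)$ on each such element. Here I would first replace $P_hv$ by a quasi-interpolant, e.g.\ Scott--Zhang, with local $H^{2\gamma}$ approximation. The contributions from the elementwise $H^1$ seminorm of $\nabla(P_hv-v)$ and from $h^{-\frac12}\|\nabla(P_hv-v)\|$ are then bounded by $ch^{2\gamma-\frac32}\|v\|_{H^{2\gamma}}$ through fractional-order local approximation, using \eqref{eqn:error_P_h} interpolated between $s=1$ and $s=2$. The difference between the quasi-interpolant and $P_hv$ lies in $V_h$ and is treated like the discrete piece above. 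So this term is at most $c\|g\|$ whenever $h\le1$.

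\textbf{Step 3 (key estimate and expected obstacle).} It remains to show
\begin{equation*}
\|v_h-P_hv\|_{L^2(\Omega)}=\|\bigl((-\Delta_h)^{-\gamma}P_h-P_h(-\Delta)^{-\gamma}\bigr)g\|_{L^2(\Omega)}\le ch^{2\gamma}\|g\|_{L^2(\Omega)},
\end{equation*}
which after multiplication by $h^{-\frac32}$ is bounded since $2\gamma-\frac32>0$. This is the main obstacle, because fractional powers are not directly comparable.

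The first thing I would try is the Balakrishnan formula
\begin{equation*}
A^{-\gamma}=\frac{\sin(\pi\gamma)}{\pi}\int_0^\infty s^{-\gamma}(s+A)^{-1}\,\d s,
\end{equation*}
applied to both $A=-\Delta$ and $A=-\Delta_h$. This reduces the claim to the resolvent error bound
\begin{equation*}
\|(s-\Delta_h)^{-1}P_h-P_h(s-\Delta)^{-1}\|_{L^2\to L^2}\le c\min(h^2,s^{-1}),\qquad s>0.
\end{equation*}
The $h^2$ part is the standard duality (Aubin--Nitsche) estimate for the shifted elliptic problem on the convex cylinder, uniform in $s\ge0$. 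The $s^{-1}$ part follows from the resolvent bounds, namely \eqref{eqn:resol} and its discrete analogue. Integrating,
\begin{equation*}
\int_0^\infty s^{-\gamma}\min(h^2,s^{-1})\,\d s\le ch^{2\gamma},
\end{equation*}
by splitting at $s=h^{-2}$. Combining Steps 1--3 yields the lemma with $c$ independent of $h$.
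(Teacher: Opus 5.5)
Steps 1 and 3, and the discrete piece of Step 2, are sound. In particular, the Balakrishnan argument with $\|(s-\Delta_h)^{-1}P_h-P_h(s-\Delta)^{-1}\|\le c\min(h^2,s^{-1})$ correctly gives $\|v_h-P_hv\|_{L^2(\Omega)}\le ch^{2\gamma}\|g\|_{L^2(\Omega)}$.

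The gap is in Step 2: the bound on the projection-error piece $\|\partial_d(P_hv-v)\|_{L^2(\omega)}$ does not follow from what you cite. Applying \eqref{eqn:cts_trace_ineq} to $w=\nabla(P_hv-v)$ on a boundary element $T$ requires $w\in H^1(T)$. It produces the term $h^{1/2}|w|_{H^1(T)}=h^{1/2}|v|_{H^2(T)}$, because $P_hv$ (or any Scott--Zhang interpolant) is affine on $T$. This is a second derivative of $v$ itself, not an approximation error. Neither interpolating \eqref{eqn:error_P_h} between $s=1$ and $s=2$ nor any fractional local approximation result can bound it by $h^{2\gamma-3/2}\|v\|_{H^{2\gamma}(\Omega)}$. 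For a general $v\in H^{2\gamma}(\Omega)$ with $2\gamma<2$ it need not even be finite.

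You can repair the step in either of two ways.

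(a) Use the scaled fractional trace inequality $\|w\|_{L^2(\partial T)}\le c(h^{-1/2}\|w\|_{L^2(T)}+h^{s-1/2}|w|_{H^s(T)})$ with $s=2\gamma-1\in(\frac12,1)$ and the Slobodeckij seminorm; it follows by scaling from the reference element. Since $\nabla P_hv$ is constant on $T$, $|w|_{H^s(T)}=|\nabla v|_{H^s(T)}$, and $\sum_T|\nabla v|_{H^s(T)}^2\le|\nabla v|_{H^s(\Omega)}^2$. Combined with $\|\nabla(v-P_hv)\|_{L^2(\Omega)}\le ch^{2\gamma-1}\|v\|_{H^{2\gamma}(\Omega)}$, this gives $ch^{2\gamma-3/2}\|g\|_{L^2(\Omega)}$.

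(b) Observe that your $v$ is actually in $H^2(\Omega)$. Since $g\in V_h^0\subset H_0^1(\Omega)=D((-\Delta)^{1/2})$, the moment inequality and the inverse inequality give $\|v\|_{H^2(\Omega)}\le c\|(-\Delta)^{1-\gamma}g\|_{L^2(\Omega)}\le c\|\nabla g\|_{L^2(\Omega)}^{2-2\gamma}\|g\|_{L^2(\Omega)}^{2\gamma-1}\le ch^{2\gamma-2}\|g\|_{L^2(\Omega)}$. Then \eqref{eqn:cts_trace_ineq} and \eqref{eqn:error_I_h} give $ch^{1/2}\|v\|_{H^2(\Omega)}\le ch^{2\gamma-3/2}\|g\|_{L^2(\Omega)}$. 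This is the same device the paper uses: pay for full $H^2$ regularity with $h^{2\gamma-2}$ via an inverse inequality, which the factor $h^{1/2}$ from the trace inequality absorbs because $\gamma>\frac34$.

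With either repair your proof is correct, and it takes a genuinely different route from the paper's. The paper sets $v=(-\Delta)^{-1}(-\Delta_h)v_h$, so $v_h$ is the Ritz projection of $v$. The whole trace error is then $O(h^{1/2}\|v\|_{H^2(\Omega)})$ by standard $H^2$ elliptic estimates, converted by $\|(-\Delta_h)v_h\|_{L^2(\Omega)}\le ch^{2\gamma-2}\|(-\Delta_h)^\gamma v_h\|_{L^2(\Omega)}$. The continuous term $\|\partial_d v\|_{L^2(\omega)}$ is handled through $\|(-\Delta_h)v_h\|_{H^{2\gamma-2}(\Omega)}\le c\|(-\Delta_h)^\gamma v_h\|_{L^2(\Omega)}$, which is an equivalence between the continuous negative Sobolev norm and the discrete one on $V_h^0$.

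Your choice $v=(-\Delta)^{-\gamma}(-\Delta_h)^\gamma v_h$ makes the continuous term immediate and avoids that norm equivalence. The cost is the fractional-power error estimate of Step 3. The paper's route, by contrast, needs only classical $H^2$ Galerkin error theory.
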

\begin{proof}
Let $g_h:=(-\Delta_h)v_h\in V_h^{0}$ and let $v\in H_0^1(\Omega)$ solve $-\Delta v=g_h$ in $\Omega$. 
By the triangular inequality, we have 
    \begin{equation*}
         \|\partial_d v_h-\partial_d v\|_{L^2(\omega)}\le  \|\nabla v_h-\nabla v\|_{L^2(\omega)}
            \le \|\nabla v_h-\nabla \mathcal{I}_hv\|_{L^2(\omega)}+ \|\nabla \mathcal{I}_h v-\nabla v\|_{L^2(\omega)}.
    \end{equation*}
    The  trace inequality \eqref{eqn:cts_trace_ineq} and the  estimate \eqref{eqn:error_I_h}    imply 
\begin{equation*}
    \|\nabla \mathcal{I}_h v-\nabla v\|_{L^2(\omega)}
    \le  
    c\sum_{T \in \mathcal{T}_h} \big(h^\frac12 |v|_{H^2(T)} + h^{-\frac12} \|\nabla \mathcal{I}_h v-\nabla v\|_{L^2(T)}\big) \le ch^\frac12 \|v\|_{H^2(\Omega)}.
\end{equation*}
    Meanwhile, the trace inequality \eqref{eqn:dis_trace_ineq}, \eqref{eqn:error_I_h} and the energy estimate lead to 
    \begin{align*}
        &\|\nabla v_h-\nabla \mathcal{I}_hv\|_{L^2(\omega)}\le ch^{-\frac{1}{2}}\|\nabla v_h-\nabla \mathcal{I}_hv\|_{L^2(\Omega)}\\
        \le& ch^{-\frac{1}{2}}\|\nabla v-\nabla \mathcal{I}_hv\|_{L^2(\Omega)}+ch^{-\frac{1}{2}}\|\nabla v-\nabla v_h\|_{L^2(\Omega)}\le ch^{\frac{1}{2}}\|v\|_{H^2(\Omega)}.
    \end{align*}
    Thus, by combining the preceding estimates, we arrive at 
\begin{equation}\label{eqn:elliptic_err}
    \|\partial_d v_h-\partial_d v\|_{L^2(\omega)} \le ch^{\frac{1}{2}}\|v\|_{H^2(\Omega)}.
\end{equation}
Since $v=0$ on the boundary $\partial\Omega$, we have   $ \|v\|_{H^2(\Omega)}\le c\|(-\Delta) v\|_{L^2(\Omega)} $ \cite[Lemma 3.1]{Thomee:2006}. By the relation $ (-\Delta)v=g_h=(-\Delta_h)v_h$ and the  inverse inequality \eqref{eqn:inverse_ineq}, we derive 
    \begin{align*}
        \|\partial_d v_h-\partial_d v\|_{L^2(\omega)}\le& ch^{\frac{1}{2}}\|(-\Delta) v\|_{L^2(\Omega)}= ch^{\frac{1}{2}}\|(-\Delta_h)v_h\|_{L^2(\Omega)}\\
        \le& ch^{\frac{1}{2}-2(1-\gamma)}\|(-\Delta_h)^{\gamma}v_h \|_{L^2(\Omega)}\le c \|(-\Delta_h)^{\gamma}v_h \|_{L^2(\Omega)} .
    \end{align*}
    Moreover, the Sobolev embedding, elliptic regularity theory and the norm equivalence imply
    \begin{align*}
        \|\partial_d   v\|_{L^2(\omega)}\le c\| v\|_{H^{2\gamma}(\Omega)}\le c\| g_h\|_{H^{2\gamma-2}(\Omega)}= c\| (-\Delta_h)v_h\|_{H^{2\gamma-2}(\Omega)}\le  c\|(-\Delta_h)^{\gamma}v_h \|_{L^2(\Omega)}.
    \end{align*}
\end{proof}

Now we present an error analysis of the scheme   \eqref{eqn:fully-dis-w}, under the following assumption. 
\begin{assumption}\label{assum:reg-num}
$f^\dagger\in  W^{1,p}(0,T;L^\infty(\omega))$ with $p>1/(1-\gamma)\alpha$, and satisfies   $ \|f^\dagger\|_{W^{1,p}(0,T;L^\infty(\omega))}\le c_f$.  Moreover, we assume that $R,\partial_d R\in    W^{1,\infty}(0,T;L^2(\Omega)) $.
\end{assumption}

Under Assumption \ref{assum:reg-num},  $\mathcal{F}=fR$ satisfies $\mathcal{F},\partial_d \mathcal{F}\in C([0,T];L^2(\Omega))\subset W^{1,p}(0,T;L^2(\Omega))$ and by Young's inequality for convolution, with $c=c(\alpha, \gamma, c_f,T,R)$, we have
\begin{equation*}
    \int_0^t (t-s)^{(1-\gamma)\alpha-1}\|\mathcal{F}'(s)\|_{L^2(\Omega)}\d s+\int_0^t (t-s)^{(1-\gamma)\alpha-1}\|\partial_d\mathcal{F}'(s)\|_{L^2(\Omega)}\d s\le c.
\end{equation*} 
The next lemma provides an error estimate of the   scheme \eqref{eqn:fully-dis-w}. 
\begin{lemma}\label{lem:wnh-wdag}
Fix $f^\dagger$. Let $w^\dagger=\partial_d u(f^\dagger)$ and $w_h^n=w_h^n(f^\dagger)$ solve  \eqref{eqn:dudx} and \eqref{eqn:fully-dis-w}, respectively. Then under Assumption \ref{assum:reg-num}, for $\gamma\in(\frac{3}{4},1)$, there exists $c>0$ independent of $h$, $\tau$ and $t_n$ such that
    \begin{equation*}
        \|\partial_d w_h^n-\partial_d w^\dagger(t_n) \|_{L^2(\omega)}\le c(h^{\frac{1}{2}}+  \tau t_n^{(1-\gamma)\alpha-1}).
    \end{equation*} 

\end{lemma}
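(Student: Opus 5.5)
We split the error through the semidiscrete Galerkin solution. Let $w_h(t)\in V_h^0$ solve $\partial_t^\alpha w_h-\Delta_h w_h=P_h(f^\dagger\partial_d R)$ with $w_h(0)=0$. Then
\begin{equation*}
\partial_d w_h^n-\partial_d w^\dagger(t_n)=\big(\partial_d w_h^n-\partial_d w_h(t_n)\big)+\big(\partial_d w_h(t_n)-\partial_d w^\dagger(t_n)\big)=:e_\tau^n+e_h(t_n).
\end{equation*}
We bound $e_\tau^n$ by $c\tau t_n^{(1-\gamma)\alpha-1}$ and $e_h(t_n)$ by $ch^{1/2}$. Throughout, the trace of the element-wise derivative on $\omega\times\{\ell\}$ is controlled via Lemma \ref{lem:dis_embedding}, and all sums in $n$ are kept pointwise in time.

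\emph{Temporal error.} Write $\mathcal{G}=f^\dagger\partial_d R$ and $\mathcal{G}(t)=\mathcal{G}(0)+\int_0^t\mathcal{G}'(s)\,\d s$. Under Assumption \ref{assum:reg-num}, $\mathcal{G}(0)\in L^2$ and $\mathcal{G}'\in L^p(0,T;L^2(\Omega))$.

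For the constant part $P_h\mathcal{G}(0)$ we use the standard BECQ Laplace-transform argument. The difference of the discrete and continuous solution operators is written as a contour integral of $(\delta_\tau(e^{-z\tau})^\alpha-\Delta_h)^{-1}\delta_\tau(e^{-z\tau})^{-1}-(z^\alpha-\Delta_h)^{-1}z^{-1}$. Apply $(-\Delta_h)^\gamma$ and use \eqref{eqn:delta_tau} and \eqref{eqn:dis-resol}. The integrand is then bounded by $c\tau|z|^{\gamma\alpha-\alpha}$, which gives
\begin{equation*}
\|(-\Delta_h)^\gamma(\cdot)\|_{L^2(\Omega)}\le c\tau t_n^{(1-\gamma)\alpha-1}.
\end{equation*}
For the $\mathcal{G}'$ part, both the semidiscrete and the fully discrete solutions are convolutions of $\mathcal{G}'$ with the respective integrated solution operators. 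The kernel error has the form $c\tau (t-s)^{(1-\gamma)\alpha-1}$ in the $(-\Delta_h)^\gamma$ norm. Young's inequality, with the bound on $\int_0^t (t-s)^{(1-\gamma)\alpha-1}\|\mathcal{G}'\|\,\d s$ stated after Assumption \ref{assum:reg-num}, then gives $c\tau$. Lemma \ref{lem:dis_embedding} finally converts both bounds into $L^2(\omega)$ bounds for $\partial_d$.

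\emph{Spatial error.} This is the main obstacle. The trace of $\partial_d$ on $\omega\times\{\ell\}$ costs half a derivative beyond $H^1$, so the natural rate is $h^{1/2}$, and we must avoid losing any regularity of $w^\dagger$.

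We mimic the proof of Lemma \ref{lem:dis_embedding}. Insert the Ritz projection $R_h w^\dagger$ and split
\begin{equation*}
e_h=\partial_d(w_h-R_hw^\dagger)+\partial_d(R_hw^\dagger-w^\dagger).
\end{equation*}
For the second term, we repeat the argument leading to \eqref{eqn:elliptic_err} with $v=w^\dagger(t)$. This gives $ch^{1/2}\|w^\dagger(t)\|_{H^2(\Omega)}$, which is bounded since $\Delta w^\dagger=\partial_t^\alpha w^\dagger-\mathcal{G}\in L^2$ uniformly in $t$ by the $W^{1,p}$ regularity. For the first term, $\rho_h=w_h-R_hw^\dagger$ satisfies
\begin{equation*}
\partial_t^\alpha\rho_h-\Delta_h\rho_h=P_h\partial_t^\alpha(w^\dagger-R_hw^\dagger).
\end{equation*}
By Lemma \ref{lem:dis_embedding} and the discrete solution operator representation,
\begin{equation*}
\|\partial_d\rho_h(t)\|_{L^2(\omega)}\le c\|(-\Delta_h)^\gamma\rho_h(t)\|_{L^2(\Omega)}.
\end{equation*}
We bound the right-hand side by pairing the smoothing factor $(t-s)^{(1-\gamma)\alpha-1}$ with $\|(I-R_h)\partial_t^\alpha w^\dagger\|\le ch\|\partial_t^\alpha w^\dagger\|_{H^1}$.

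The delicate point is whether $\partial_t^\alpha w^\dagger\in H^1$ holds uniformly in time. If it does not, we instead trade the $h^{2\gamma-1}$ factor from an inverse inequality against the $h^2$ $L^2$-error, possibly by shifting $\gamma$ closer to $3/4$. Either way this produces a factor $h^{2-2\gamma}\ge h^{1/2}$, which is enough because $\gamma>3/4$.
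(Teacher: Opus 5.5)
Your temporal estimate is essentially the paper's Step~2. The piece $\partial_d(R_hw^\dagger-w^\dagger)$ is also fine, since $\|\Delta w^\dagger(t)\|_{L^2(\Omega)}$ is bounded uniformly in $t$.

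\textbf{The gap is the bound on $\rho_h=w_h-R_hw^\dagger$.}
\begin{itemize}
\item Under Assumption~\ref{assum:reg-num} alone, $\mathcal{G}(0)=f^\dagger(0)\partial_dR(0)$ is only in $L^2(\Omega)$ and need not vanish. This is exactly why the lemma carries the factor $t_n^{(1-\gamma)\alpha-1}$; compare Remark~\ref{rmk:wnh-wdag}.
\item We have $\partial_t^\alpha w^\dagger(t)=F(t)\mathcal{G}(0)+\int_0^tF(s)\mathcal{G}'(t-s)\,\d s\to\mathcal{G}(0)$ as $t\to0$. A uniform $H^1$ bound on $\partial_t^\alpha w^\dagger$ would therefore force $\mathcal{G}(0)\in H_0^1(\Omega)$. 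The correct bound is only $\|\partial_t^\alpha w^\dagger(s)\|_{H^1(\Omega)}\le c(s^{-\alpha/2}+1)$.
\item Paired with the factor $(t-s)^{(1-\gamma)\alpha-1}$ coming from $(-\Delta_h)^\gamma$, this gives $\|(-\Delta_h)^\gamma\rho_h(t)\|_{L^2(\Omega)}\le ch\bigl(1+t^{-\alpha(\gamma-\frac12)}\bigr)$. This blows up as $t\to0$, since $\gamma>\frac12$. So you do not obtain $ch^{1/2}$ uniformly in $t_n$.
\end{itemize}

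Your fallback does not close this. For $\gamma>\frac34$ one has $2-2\gamma<\frac12$, so $h^{2-2\gamma}\ge h^{1/2}$. That means the bound $ch^{2-2\gamma}$ is \emph{weaker} than the claimed one, not sufficient; the inequality is being used in the wrong direction. Letting $\gamma\downarrow\frac34$ only yields $h^{1/2-\epsilon}$.

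The loss comes from sending $\rho_h$ through Lemma~\ref{lem:dis_embedding}. For discrete functions oscillating at the mesh scale, $\|(-\Delta_h)^\gamma v_h\|_{L^2(\Omega)}$ can exceed the true trace size $ch^{-1/2}\|\nabla v_h\|_{L^2(\Omega)}$ by a factor of order $h^{3/2-2\gamma}\gg1$. When $\mathcal{G}(0)$ is rough, $\rho_h$ has exactly such components at small times.

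\textbf{A repair within your framework.} Bypass Lemma~\ref{lem:dis_embedding} for $\rho_h$ and proceed as follows.
\begin{itemize}
\item Apply \eqref{eqn:dis_trace_ineq} elementwise to the piecewise constant $\nabla\rho_h$, as in the proof of Lemma~\ref{lem:dis_embedding}. This gives $\|\partial_d\rho_h(t)\|_{L^2(\omega)}\le ch^{-1/2}\|\nabla\rho_h(t)\|_{L^2(\Omega)}$.
\item Use the $H^1$-smoothing $\|(-\Delta_h)^{1/2}E_h(t)\|\le ct^{\alpha/2-1}$ together with $\|(I-R_h)v\|_{L^2(\Omega)}\le ch\|v\|_{H^1(\Omega)}$. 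Then $\|\nabla\rho_h(t)\|_{L^2(\Omega)}\le ch\int_0^t(t-s)^{\frac\alpha2-1}(s^{-\frac\alpha2}+1)\,\d s\le ch$ uniformly in $t$, because the two singularities now balance into a Beta integral.
\item Combining the two gives $ch^{1/2}$.
\end{itemize}
You would also need to actually prove the $H^1$ bound on $\partial_t^\alpha w^\dagger$ from the representation above, rather than leaving it as an open ``delicate point''.

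The paper avoids the issue by a different route. It expresses $E(t)$ through $(-\Delta)^{-1}\frac{\d}{\d t}(I-F(t))$ and integrates by parts in time. This reduces the spatial error to elliptic and resolvent Ritz-type errors driven by $\mathcal{G}(0)$ and $\mathcal{G}'(s)$. Each is bounded by \eqref{eqn:elliptic_err} in terms of only the $L^2(\Omega)$ norm of the data, uniformly in $t_n$, so no $H^1$ regularity of $\partial_t^\alpha w^\dagger$ is needed.
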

\begin{proof}
Let $w_h(t)$ be the spatially semidiscrete solution  satisfying $w_h(0)=0$ and 
    \begin{equation*}
        \partial_t^\alpha w_h(t)-\Delta_h w_h(t) =P_h(\partial_d\mathcal{F}(t)).
    \end{equation*}
Then by the triangle inequality, we split the error $\|\partial_d w_h^n-\partial_d w^\dagger(t_n) \|_{L^2(\omega)}$ into two parts:
    \begin{equation*}
        \|\partial_d w_h^n-\partial_d w^\dagger(t_n) \|_{L^2(\omega)}\le 
        \| \partial_d w^\dagger(t_n)-\partial_d w_h(t_n) \|_{L^2(\omega)}+ \|  \partial_d w_h(t_n)-\partial_d w_h^n \|_{L^2(\omega)}:=\mathrm{I}^n+\mathrm{II}^n.
    \end{equation*}
The rest of the proof is split into two steps.

\smallskip
\noindent\underline{Step 1. Bound on the term ${\rm I}_1^n$.} With   the semidiscrete solution operator $$E_h(t):=\frac{1}{2\pi {\rm i}}\int_{\Gamma_{\theta,\sigma}}e^{zt}  (z^\alpha -\Delta_h)^{-1}\, \d z,$$ $w_h(t)$ can be represented by \cite[Section 6.2]{Jin:2021book}
    \begin{equation*}
        w_h(t)=\int_0^t E_h(t-s) P_h( \partial_d \mathcal{F}(s)) \d s.
    \end{equation*}
Recall that the following two identities hold \cite[Lemma 1.7]{JinZhou:2023book}
\begin{equation*}
     -E(t)=(-\Delta)^{-1} \frac{\d}{\d t}(I-F(t))   \quad \text{and}\quad -E_h(t)=(-\Delta)^{-1} \frac{\d}{\d t}(I-F_h(t)),  
\end{equation*} 
with the operators $F(t)$ and $F_h(t)$ defined respectively by
\begin{equation*}
     F(t):=\frac{1}{2\pi {\rm i}}\int_{\Gamma_{\theta,\sigma }}e^{zt} z^{\alpha-1} (z^\alpha-\Delta)^{-1}\, \d z  \quad \text{and}\quad   F_h(t):=\frac{1}{2\pi {\rm i}}\int_{\Gamma_{\theta,\sigma }}e^{zt} z^{\alpha-1} (z^\alpha-\Delta_h)^{-1}\, \d z.
\end{equation*}
 Then by integration by parts, we have
    \begin{align*}
        &w^\dagger(t_n)-  w_h(t_n)=\int_0^{t_n} E(s)(\partial_d \mathcal{F}(t_n-s))-E_h(s)P_h( \partial_d \mathcal{F}(t_n-s)) \d s\\
        =&(-\Delta_h)^{-1} (I-F_h(t_n))P_h(\partial_d \mathcal{F}(0))- (-\Delta)^{-1} (I-F(t_n))\partial_d \mathcal{F}(0)\\
        &+\int_0^{t_n} (-\Delta_h)^{-1}(I-F_h(s))  P_h(\partial_d \mathcal{F}'(t_n-s) ) - (-\Delta)^{-1}(I-F(s))  \partial_d \mathcal{F}'(t_n-s)\d s\\
        =:&\mathrm{I}_1^n+\mathrm{I}_2^n .
    \end{align*}
Note that the term $\mathrm{I}_1^n$ can be further split into
\begin{align*}
    \mathrm{I}_1^n=& (-\Delta_h)^{-1}  P_h(\partial_d \mathcal{F}(0))- (-\Delta)^{-1}  \partial_d \mathcal{F}(0)\\
      &-\left( (-\Delta_h)^{-1}  F_h(t_n) P_h(\partial_d \mathcal{F}(0))- (-\Delta)^{-1}  F(t_n) \partial_d \mathcal{F}(0)\right)=:\mathrm{I}_{1,1}^n-\mathrm{I}_{1,2}^n.
\end{align*}
Next we bound the terms ${\rm I}_{1,1}^n$ and ${\rm I}_{1,2}^n$ separately. To bound the term ${\rm I}_{1,1}^n$, let $v\in H_0^1(\Omega)\cap H^2(\Omega)$ solve
    $ -\Delta v= \partial_d \mathcal{F}(0)$ in $ \Omega$  and $v_h\in V_h^{0}$ solve
    \begin{equation*}
        ( \nabla v_h,\nabla \varphi_h)= (\partial_d \mathcal{F}(0),\varphi_h),\quad \forall \varphi_h\in V_h^{0}.
    \end{equation*}
Then the error estimate \eqref{eqn:elliptic_err} directly implies  
    \begin{align*}
        \|\partial_d  \mathrm{I}_{1,1}^n\|_{L^2(\omega)}=\|\partial_d v_h-\partial_d v\|_{L^2(\omega)}\le ch^{\frac{1}{2}}\|\partial_d \mathcal{F}(0)\|_{L^2(\Omega)}.
    \end{align*} 
Next by the definitions of   $F(t_n)$ and $F_h(t_n)$, we have 
    \begin{align*} 
    & {\rm I}_{1,2}^n
    =\frac{1}{2\pi \mathrm{i}}\int_{\Gamma_{\theta,\sigma}} e^{zt_n}z^{\alpha-1}\left((z^\alpha-\Delta_h)^{-1}(-\Delta_h)^{-1}P_h(\partial_d\mathcal{F}(0))-(z^\alpha-\Delta)^{-1}(-\Delta)^{-1}\partial_d\mathcal{F}(0) \right)\d z.
\end{align*}
Now using the identity $z^\alpha(z^\alpha-\Delta)^{-1}=I-(-\Delta)(z^\alpha-\Delta)^{-1}$ (and similarly for $\Delta_h$), we deduce
\begin{align*}
  {\rm I}_{1,2}^n  =&\frac{1}{2\pi \mathrm{i}}\int_{\Gamma_{\theta,\sigma}} e^{zt_n}z^{-1}\left((-\Delta_h)^{-1}P_h(\partial_d\mathcal{F}(0))-(-\Delta)^{-1}\partial_d\mathcal{F}(0) \right) \d z\\
    &+ \frac{1}{2\pi \mathrm{i}}\int_{\Gamma_{\theta,\sigma}} e^{zt_n}z^{-1}\left((z^\alpha-\Delta)^{-1}\partial_d\mathcal{F}(0)-(z^\alpha-\Delta)^{-1}P_h(\partial_d\mathcal{F}(0) )\right) \d z.
    \end{align*}
    Similarly, by repeating the preceding argument with the 
 estimate \eqref{eqn:elliptic_err}, we derive
    \begin{align*} 
        \|\partial_d (z^\alpha-\Delta_h)^{-1} P_h(\partial_d\mathcal{F}(0))-\partial_d (z^\alpha-\Delta)^{-1} \partial_d\mathcal{F}(0)\|_{L^2(\omega)}\le ch^{\frac{1}{2}}\|\partial_d \mathcal{F}(0)\|_{L^2(\Omega)},\quad \forall z\in\Gamma_{\theta,\sigma}.
    \end{align*}
Then  we get
    \begin{align*}       
        \|\partial_d { \rm I}_{1,2}^n\|_{L^2(\omega)}\le  ch^{\frac{1}{2}}\|\partial_d\mathcal{F}(0)\|_{L^2(\Omega)} \left| \int_{\Gamma_{\theta,\sigma} } e^{zt_n}z^{-1}\d z \right| \le ch^{\frac{1}{2}} \|\partial_d\mathcal{F}(0)\|_{L^2(\Omega)}.
    \end{align*}
    This proves the estimate of $\mathrm{I}_1^n$.
Next, by Assumption \ref{assum:reg-num}, $\partial_d \mathcal{F}'\in L^p(0,T;L^2(\Omega))$, and repeating the argument for the term $\mathrm{I}_1^n$ gives
\begin{align*}
   \|\partial_d  \mathrm{I}_2^n\|_{L^2(\omega)}
        \le &ch^{\frac{1}{2}}\int_0^{t_n}\| \partial_d \mathcal{F}'(s)\|_{L^2(\Omega)}\d s.
    \end{align*}
    Thus, we can bound the term  $\mathrm{I}^n$ by
   $\| \partial_d \mathrm{I}^n \|_{L^2(\omega)} \le ch^{\frac{1}{2}}$.

\medskip
\noindent\underline{Step 2. Bound on the term $\mathrm{II}^n$.} Since $\partial_d \mathcal{F}\in W^{1,p}(0,T;L^2(\Omega))$, we have 
$\partial_d\mathcal{F}(t)= \partial_d\mathcal{F}(0)+(1\ast \partial_d\mathcal{F}')(t)$.
    Then $w_h(t)$ can be represented by 
    \begin{align*}
        w_h(t)&=\frac{1}{2\pi\mathrm{i}}\int_{\Gamma_{\theta,\sigma}} e^{zt}z^{-1}(z^\alpha-\Delta_h)^{-1} P_h\partial_d\mathcal{F}(0)\d z +\left((E_h\ast 1)\ast \partial_d\mathcal{F}'\right)(t).
    \end{align*}
Meanwhile, let $ \delta_{t_n}$ be the Dirac-delta function at $t_n$ (from the left side) and let $ E_{h,\tau}(t)=\tau \sum_{n=0}^{\infty}E_{h,\tau}^n\delta_{t_n}(t) $. Then $w_h^n$ can be expressed as 
    \begin{align*}
        w_h^n=&\frac{1}{2\pi \mathrm{i}}\int_{\Gamma_{\theta,\sigma}^\tau}e^{z t_n} e^{-z\tau} z^{-1} \big( \delta_\tau( e^{-z \tau})^\alpha -\Delta_h \big)^{-1} P_h(\partial_d\mathcal{F}(0))\d z 
        +  \left((E_{h,\tau}\ast1)  \ast P_h(\partial_d\mathcal{F}') \right)(t_n ).
    \end{align*}
    Then $\mathrm{II}^n=\mathrm{II}^n_1+\mathrm{II}^n_2 $, with 
    \begin{alignat*}{2}
        \mathrm{II}^n_1&:= \frac{1}{2\pi\mathrm{i}}\int_{\Gamma_{\theta,\sigma}} e^{zt_n}z^{-1}(z^\alpha-\Delta_h)^{-1} P_h(\partial_d\mathcal{F}(0))\d z &&  \\
        & \quad-\frac{1}{2\pi \mathrm{i}}\int_{\Gamma_{\theta,\sigma}^\tau}e^{z t_n} e^{-z\tau} z^{-1} \big( \delta_\tau( e^{-z \tau})^\alpha -\Delta_h \big)^{-1} P_h(\partial_d\mathcal{F}(0))\d z,  \\
        \mathrm{II}^n_2 &:=\left((E_h\ast 1)\ast P_h(\partial_d\mathcal{F}')\right)(t_n) -\left((E_{h,\tau}\ast1)  \ast P_h(\partial_d\mathcal{F}') \right)(t_n ) .  
    \end{alignat*} 
    For the term $\mathrm{II}_1^n$, we have 
    \begin{align*}
        \mathrm{II}_1^n=&\frac{1}{2\pi\mathrm{i}}\int_{\Gamma_{\theta,\sigma} \setminus \Gamma_{\theta,\sigma}^\tau } e^{zt_n}z^{-1}(z^\alpha-\Delta_h)^{-1} P_h(\partial_d\mathcal{F}(0))\d z\\
        &+\frac{1}{2\pi \mathrm{i}}\int_{\Gamma_{\theta,\sigma}^\tau}e^{z t_n} \left(z^{-1}(z^\alpha-\Delta_h)^{-1}-e^{-z\tau} z^{-1} \big( \delta_\tau( e^{-z \tau})^\alpha -\Delta_h \big)^{-1}\right) P_h(\partial_d\mathcal{F}(0))\d z.
    \end{align*}
    By the error estimate \eqref{eqn:delta_tau} and the resolvent estimate \eqref{eqn:dis-resol}, we have for $\gamma\in(\frac{3}{4},1)$,
    \begin{equation*}
        \|(-\Delta_h)^{\gamma}(z^{-1}(z^\alpha-\Delta_h)^{-1}-e^{-z\tau} z^{-1} \big( \delta_\tau( e^{-z \tau})^\alpha -\Delta_h \big)^{-1}) \|_{L^2(\Omega)\to L^2(\Omega)}\le c\tau |z|^{(\gamma-1)\alpha}.
    \end{equation*}
    Together with  Lemma \ref{lem:dis_embedding} and the   resolvent estimate \eqref{eqn:dis-resol}, by setting $\sigma=t_n^{-1}$, we  derive
    \begin{align*}
         \| \partial_d \mathrm{II}^n_1\|_{L^2(\omega)}
         \le& c\|P_h(\partial_d\mathcal{F}(0)) \|_{L^2(\Omega)}\int_{c\tau^{-1}}^{\infty} e^{-c\rho t_n} \rho ^{(\gamma-1)\alpha-1} \d \rho \\
         &+ c\tau\|P_h(\partial_d\mathcal{F}(0)) \|_{L^2(\Omega)}\left(\int_{t_n^{-1}}^{\infty} e^{-c\rho t_n} \rho ^{(\gamma-1)\alpha} \d \rho+ t_n ^{(1-\gamma)\alpha-1}   \right)\\
         \le & c\tau t_n ^{(1-\gamma)\alpha-1} \| \partial_d\mathcal{F}(0) \|_{L^2(\Omega)}.
    \end{align*}
    For the term $\mathrm{II}^n_2$, similarly, we have   for $\gamma\in (\frac{3}{4},1)$,
    \begin{equation*}
        \| (-\Delta_h)^{\gamma}((E_h-E_{h,\tau})\ast 1)(t_n)\|_{L^2(\Omega)\to L^2(\Omega)}\le c\tau t_n^{(1-\gamma)\alpha-1}.
    \end{equation*}
    For $t\in (t_{n-1},t_n]$, using Taylor expansion, we have
    \begin{equation*}
        (-\Delta_h)^{\gamma}((E_h-E_{h,\tau})\ast 1)(t)=(-\Delta_h)^{\gamma}((E_h-E_{h,\tau})\ast 1)(t_n)-\int_{t}^{t_n}(-\Delta_h)^{\gamma}(E_h-E_{h,\tau})(s)\d s.
    \end{equation*}
    By the smoothing property in Lemma \ref{lem:dis-sol-op}, we arrive at 
    \begin{align*}
   &\quad \left\|\int_{t}^{t_n}(-\Delta_h)^{\gamma}E_h (s)\d s\right\|_{L^2(\Omega)\to L^2(\Omega)}+\left\|\int_{t}^{t_n}(-\Delta_h)^{\gamma}E_{h,\tau} (s)\d s\right\|_{L^2(\Omega)\to L^2(\Omega)}\\
    &\le  c\int_t^{t_n} s^{(1-\gamma)\alpha-1} \d s\le c\tau t_n^{(1-\gamma)\alpha-1}.
    \end{align*}
    Thus, 
    $$ \| (-\Delta_h)^{\gamma}((E_h-E_{h,\tau})\ast 1)(t)\|_{L^2(\Omega)\to L^2(\Omega)}\le c\tau t^{(1-\gamma)\alpha-1}. $$
   This and Lemma \ref{lem:dis_embedding} yield
    \begin{align*}
         \| \partial_d \mathrm{II}^n_2\|_{L^2(\omega)}\le c\tau \int_0^{t_n} (t_n-s)^{(1-\gamma)\alpha-1}\|\partial_d\mathcal{F}'(s)\|_{L^2(\Omega)}\d s.
    \end{align*}
    This completes the proof of the lemma.
\end{proof}
\begin{remark}\label{rmk:wnh-wdag}
    Lemma \ref{lem:wnh-wdag} provides the error bound  $O(\tau t_n^{(1-\gamma)\alpha-1})$. The factor $t_n^{(1-\gamma)\alpha-1}$ indicates that the error becomes unbounded as $t_n \to 0$. If $\partial_d \mathcal{F}(0)=0$,  the proof indicates
    \begin{equation*}
        \|\partial_d w_h^n-\partial_d w^\dagger(t_n) \|_{L^2(\omega)}\le c(h^{\frac{1}{2}}+  \tau  ).
    \end{equation*} 

\end{remark}

\subsection{Numerical scheme for \textbf{(ISP)}}\label{subsec:inverse_err}
Now we develop a numerical scheme for recovering the space-time varying source $f^\dagger$. Assumption \ref{assum:noise} (i) imposes  regularity conditions on the source $\mathcal{F}$.
\begin{assumption}\label{assum:noise}
The problem data satisfies the following conditions.
    \begin{itemize}
        \item[{\rm(i)}] The domain $\omega$ is $C^3$,  and the source $\mathcal{F}=f^\dagger R$ satisfies $\mathcal{F},\partial_d\mathcal{F}\in C([0,T];H^q(\Omega))\cap W^{2,p}(0,T;L^2(\Omega))$, with $p>\frac1\alpha$ and $q\in (0,1]$. Moreover, $\mathcal{F}|_{t=0}=\partial_d\mathcal{F}|_{t=0}=\partial_t\mathcal{F}|_{t=0}=\partial_t\partial_d\mathcal{F}|_{t=0}=0$. 
        \item[{\rm(ii)}]  $z^\delta\in C([0,T];L^2(\omega))$ with $z^{\delta}(0,x')=0$ and $\|z^\delta-u^\dagger(t,x',\ell)\|_{C([0,T];L^2(\omega))}\le \delta $.  
    \end{itemize} 
\end{assumption}

Then, following the argument of \cite[Lemma 2.4]{GaitanKian:2013}, the solution $u^\dagger$ to \eqref{eqn:forward_eq} satisfies 
\begin{equation}\label{eqn:reg-udag}
    u^\dagger(t,x',\ell)\in C([0,T];H^{2+q}(\omega))\cap C^2([0,T];L^2(\omega)).
\end{equation}

The scheme \eqref{eqn:recon} requires differentiating the data $z^\delta$, which is however ill-defined due to the low regularity of $z^\delta$. Instead we define a discrete fractional derivative $\bar{\partial}_\tau^\alpha z^\delta$ using \eqref{eqn:BECQ}:
\begin{equation}\label{eqn:dtz}
    \bar{\partial}_\tau^\alpha (z^\delta)^n:=\tau^{-\alpha}\sum_{j=0}^{n} \omega_j^{(\alpha)}((z^\delta)^{n-j}-(z^\delta)^0).
\end{equation} 
Let   $\mathcal{T}_h^\partial$ be  the restriction of $\mathcal{T}_h$ on the boundary $\omega\subset\partial\Omega$, and let $V_h^\partial\subset H^1(\omega) $ and $X_h^\partial\subset H^1(\omega)$ be the finite element spaces  with continuous piecewise polynomials of degree 1 and 2, respectively, on $\mathcal{T}_h^\partial$. Then we define the discrete Laplacian $-\Delta_h' z^\delta\in V_h^\partial$ by 
\begin{equation}\label{eqn:dxxz}
    (-\Delta_h' z^\delta, \varphi_h)_{\omega}:=-(\partial_nP_{X_h^\partial} z^\delta,\varphi_h)_{\partial\omega}+(\nabla P_{X_h^\partial} z^\delta,\nabla\varphi_h)_{\omega},\quad \forall\varphi_h\in V_h^\partial,
\end{equation}
where $\partial_n$ denotes the unit outward normal derivative to the boundary $\partial\omega$ and $P_{X_h^\partial}:L^2(\omega)\to X_h^\partial$ is the $L^2(\omega) $ projection onto the space $X_h^\partial$. Note that the data $z^\delta $ is  projected onto the space $X_h^\partial$, but the Laplacian is approximated in the space $V_h^\partial$.
The following lemma provides a crucial estimate.
\begin{lemma}\label{lem:dis_noise}
Let Assumption \ref{assum:noise} hold and  $\bar{\partial}_\tau^ \alpha z^\delta$ and $-\Delta_h' z^\delta $ be defined by \eqref{eqn:dtz} and  \eqref{eqn:dxxz}, respectively.  Then there exists $c>0$ independent of $h$, $\tau$ and $t_n$ such that
    \begin{equation*}
        \|\partial_t^\alpha u^\dagger(t_n)- \bar{\partial}_\tau^\alpha z^\delta(t_n) \|_{ L^2(\omega) }\le c(\tau^{-\alpha}\delta+\tau)\quad \text{and}\quad \|  \Delta' u^\dagger(t_n)-\Delta_h'z^\delta(t_n) \|_{ L^2(\omega)}\le c(h^{-2}\delta+h^q).
    \end{equation*} 
\end{lemma}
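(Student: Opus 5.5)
Both estimates follow from splitting the error into a data-noise part and a consistency part, which are treated separately.

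\emph{Time derivative.} Write $g(t):=u^\dagger(t,\cdot,\ell)$, so that
\begin{equation*}
\partial_t^\alpha g(t_n)-\bar\partial_\tau^\alpha (z^\delta)^n=\big(\partial_t^\alpha g(t_n)-\bar\partial_\tau^\alpha g^n\big)+\bar\partial_\tau^\alpha (g-z^\delta)^n .
\end{equation*}
\textbf{Noise term.} Since $g(0)=0=z^\delta(0)$, the definition \eqref{eqn:BECQ} gives $\|\bar\partial_\tau^\alpha (g-z^\delta)^n\|_{L^2(\omega)}\le \tau^{-\alpha}\sum_{j=0}^n|\omega_j^{(\alpha)}|\,\delta$. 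The weights satisfy $\omega_0^{(\alpha)}=1$ and $\omega_j^{(\alpha)}<0$ for $j\ge1$, with $\sum_{j\ge0}\omega_j^{(\alpha)}=0$. Hence $\sum_j|\omega_j^{(\alpha)}|\le 2$, and the noise term is at most $2\tau^{-\alpha}\delta$.

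\textbf{Consistency term.} By \eqref{eqn:reg-udag}, $g\in C^2([0,T];L^2(\omega))$. From Assumption \ref{assum:noise}(i), i.e. $\mathcal F(0)=\partial_t\mathcal F(0)=0$ together with the zero initial datum, we also get $g(0)=g'(0)=0$; I expect to verify this from the solution representation \eqref{eqn:solrep-u} differentiated in time. Then the standard BECQ error estimate \cite[Chapter 3]{JinZhou:2023book} gives $O(\tau)$ uniformly in $t_n$. One route is to write $g=t\ast g''$ and use that BECQ applied to $t$ has error $c\tau t^{-\alpha}$, which is integrable, so the convolution yields $c\tau\|g''\|_{L^\infty(L^2)}$. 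Alternatively one can use the generating-function estimate \eqref{eqn:delta_tau}, $|\delta_\tau(e^{-z\tau})^\alpha-z^\alpha|\le c\tau|z|^{1+\alpha}$, applied to the Laplace transform of $g$.

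\emph{Laplacian.} Split
\begin{equation*}
\Delta' g-\Delta_h' z^\delta=\big(\Delta' g-\Delta_h' g\big)+\Delta_h'(g-z^\delta).
\end{equation*}
\textbf{Noise term.} Apply \eqref{eqn:dxxz} with $e:=g-z^\delta$ and test with $\varphi_h=-\Delta_h' e$. The volume term is handled by the inverse inequality on $X_h^\partial$ and $V_h^\partial$: $|(\nabla P_{X_h^\partial}e,\nabla\varphi_h)_\omega|\le ch^{-2}\|P_{X_h^\partial}e\|\,\|\varphi_h\|$. For the boundary term, the discrete trace inequality \eqref{eqn:dis_trace_ineq} (applied on $\omega$) gives $\|\partial_n P_{X_h^\partial}e\|_{L^2(\partial\omega)}\le ch^{-1/2}\|\nabla P_{X_h^\partial}e\|_{L^2(\omega)}\le ch^{-3/2}\|e\|$ and $\|\varphi_h\|_{L^2(\partial\omega)}\le ch^{-1/2}\|\varphi_h\|$. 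Together these give $\|\Delta_h'e\|\le ch^{-2}\delta$, using $L^2$-stability of $P_{X_h^\partial}$.

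\textbf{Consistency term.} Here $g(t_n)\in H^{2+q}(\omega)$. Since $\partial\omega$ is $C^3$, I would use that $-\Delta_h' v$ is characterized by $(-\Delta_h'v,\varphi_h)_\omega=(-\Delta' v,\varphi_h)_\omega+(\partial_n(v-P_{X_h^\partial}v),\varphi_h)_{\partial\omega}+(\nabla(P_{X_h^\partial}v-v),\nabla\varphi_h)_\omega$, which follows from integrating by parts. The difference $-\Delta_h'v-P_{V_h^\partial}(-\Delta' v)$ is then a functional of $v-P_{X_h^\partial}v$. Because $X_h^\partial$ consists of quadratics, its approximation property gives $\|v-P_{X_h^\partial}v\|_{H^s}\le ch^{2+q-s}\|v\|_{H^{2+q}}$ for $s\le 2$ (elementwise for $s>1$, valid on a quasi-uniform mesh). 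Combining this with the trace inequality \eqref{eqn:cts_trace_ineq} and the inverse inequality on $\varphi_h$ bounds the boundary term by $h^{-1/2}\cdot h^{1/2+q}$ and the gradient term by $h^{-1}\cdot h^{1+q}$, so the duality over $\varphi_h$ yields $O(h^q)$. Finally, $\|\Delta'v-P_{V_h^\partial}\Delta'v\|\le ch^q\|v\|_{H^{2+q}}$. Curved-boundary fitting issues for the $C^3$ domain $\omega$ are absorbed, as the paper implicitly does.

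\emph{Main obstacle.} I expect the hardest step to be the consistency part of the Laplacian estimate. It needs the elementwise $H^2$-approximation of the $L^2$ projection onto quadratics and the fractional order $q$ via interpolation, and it needs careful trace handling so that no $h^{-1/2}$ loss remains. The time part is routine once $g(0)=g'(0)=0$ is established.
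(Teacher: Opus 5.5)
Your proposal is correct and follows essentially the paper's argument. For the time derivative you use the same noise/consistency split, with $\sum_j|\omega_j^{(\alpha)}|\le 2$ and the cited BECQ estimate. For the Laplacian you reduce, by integrating by parts against $V_h^\partial$ test functions, to gradient and boundary terms of $P_{X_h^\partial}$-errors, bounded by inverse and trace inequalities; the only difference from the paper is that you separate the noise and consistency pieces before this duality step rather than after. Your explicit remark that a uniform $O(\tau)$ bound needs your $g'(0)=0$, which follows from the compatibility conditions in Assumption~\ref{assum:noise}(i), is a detail the paper leaves implicit in its citation.
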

\begin{proof}
By the triangle inequality, we have 
    \begin{equation*}
        \| \partial_t^\alpha u^\dagger(t_n)- \bar{\partial}_\tau^\alpha z^\delta(t_n) \|_{ L^2(\omega) }\le \| \partial_t^\alpha u^\dagger(t_n)- \bar{\partial}_\tau^\alpha u^\dagger(t_n) \|_{ L^2(\omega) }+\| \bar{\partial}_\tau^\alpha u^\dagger(t_n)- \bar{\partial}_\tau^\alpha z^\delta(t_n) \|_{ L^2(\omega) }.
    \end{equation*}
By the definition \eqref{eqn:dtz}, we have $\omega_0^{(\alpha)}=1$, $\omega_j^{(\alpha)}<0$ for $j\ge1$, and $\sum_{j=0}^\infty \omega_j^{(\alpha)}=0$. Hence, \begin{align}\label{eqn:dtau_u-z}
        \| \bar{\partial}_\tau^\alpha u^\dagger(t_n)- \bar{\partial}_\tau^\alpha z^\delta(t_n) \|_{ L^2(\omega) }
        \le& \tau^{-\alpha}\sum_{j=0}^n \|\omega_j^{(\alpha)} (u^\dagger(t_{n-j})-z^\delta(t_{n-j}) )\|_{ L^2(\omega) }\notag\\
        \le& \tau^{-\alpha}\|u^\dagger-z^\delta\|_{C([0,T];L^2(\omega))}\sum_{j=0}^n|\omega_j^{(\alpha)}|\le c\tau^{-\alpha}\delta.
    \end{align}
    Meanwhile,  the regularity estimate \eqref{eqn:reg-udag} and \cite[Theorem 3.1]{JinZhou:2023book} imply 
    \begin{equation*}
         \| \partial_t^\alpha u^\dagger(t_n)- \bar{\partial}_\tau^\alpha u^\dagger(t_n) \|_{ L^2(\omega) }\le c\tau.
    \end{equation*}
This estimate and \eqref{eqn:dtau_u-z} imply
    \begin{align*}
        \| \partial_t^\alpha u^\dagger(t_n)- \bar{\partial}_\tau^\alpha z^\delta(t_n) \|_{  L^2(\omega)  } \le c(\tau^{-\alpha}\delta+\tau).
    \end{align*}
    Next, using the $L^2(\omega)$ projection $P_{V_h^\partial}:L^2(\omega)\to V_h^\partial$, we have 
    \begin{equation*}
        \|\Delta' u^\dagger(t_n) -\Delta_h' z^\delta(t_n)  \|_{L^2(\omega)}\le  \|\Delta' u^\dagger(t_n) -P_{V_h^\partial} \Delta' u^\dagger  (t_n)\|_{L^2(\omega)}+ \|P_{V_h^\partial} \Delta' u^\dagger(t_n) -\Delta_h' z^\delta (t_n) \|_{L^2(\omega)}.
    \end{equation*} 
    The regularity estimate \eqref{eqn:reg-udag} and the error estimate \eqref{eqn:error_P_h} imply 
    \begin{equation*}
         \|\Delta' u^\dagger(t_n) -P_{V_h^\partial} \Delta' u^\dagger(t_n)  \|_{L^2(\omega)}\le ch^q.
    \end{equation*}
By the definition \eqref{eqn:dxxz}, the inverse inequality \eqref{eqn:inverse_ineq} and  trace inequality \eqref{eqn:dis_trace_ineq}, we have 
    \begin{align*}
        &\|P_{V_h^\partial} \Delta' u^\dagger(t_n) -\Delta_h' z^\delta (t_n) \|_{L^2(\omega)}= \sup_{\varphi\in L^2(\omega)}\frac{(P_{V_h^\partial} \Delta' u^\dagger(t_n) -\Delta_h' z^\delta (t_n), \varphi)_\omega}{\|\varphi\|_{L^2(\omega)}}\\
        =& \sup_{\varphi\in L^2(\omega)}\frac{( \Delta' u^\dagger(t_n) -\Delta_h' z^\delta(t_n), P_{V_h^\partial}\varphi_h)_\omega}{\|\varphi\|_{L^2(\omega)}}\\
        =&\sup_{\varphi\in L^2(\omega) }\frac{( \nabla P_{X_h^\partial} z^\delta(t_n)- \nabla u^\dagger(t_n) , \nabla P_{V_h^\partial}\varphi)_\omega+( \partial_n u^\dagger(t_n)-\partial_nP_{X_h^\partial} z^\delta(t_n),P_{V_h^\partial}\varphi)_{\partial\omega}}{\|\varphi\|_{L^2(\omega)}}\\
        \le & ch^{-1} \|\nabla P_{X_h^\partial} z^\delta(t_n)- \nabla u^\dagger (t_n)\|_{L^2(\omega)}+ ch^{-\frac{1}{2}} \|\partial_n P_{X_h^\partial} z^\delta(t_n)- \partial_n u^\dagger (t_n)\|_{L^2(\partial\omega)}.
    \end{align*}
    By the inverse inequality \eqref{eqn:inverse_ineq}, the discrete trace inequality \eqref{eqn:dis_trace_ineq} and the approximation property of $P_{X_h^\partial}$, we derive  
    \begin{align*}
        \|\nabla P_{X_h^\partial} z^\delta(t_n)- \nabla u^\dagger (t_n)\|_{L^2(\omega)}
        \le &\|\nabla P_{X_h^\partial} z^\delta(t_n)- \nabla  P_{X_h^\partial} u^\dagger(t_n) \|_{L^2(\omega)}\\
        &+ \|\nabla P_{X_h^\partial} u^\dagger(t_n)- \nabla u^\dagger(t_n) \|_{L^2(\omega)} 
        \le c(h^{-1}\delta+h^{1+q})\end{align*}
    and
         \begin{align*} \|\partial_n P_{X_h^\partial} z^\delta(t_n)- \partial_n u^\dagger(t_n) \|_{L^2(\partial\omega)}
         \le &\|\partial_n P_{X_h^\partial} z^\delta(t_n)- \partial_n  P_{X_h^\partial} u^\dagger(t_n) \|_{L^2(\partial\omega)}\\
        &+ \|\partial_n P_{X_h^\partial} u^\dagger(t_n)- \partial_n u^\dagger(t_n) \|_{L^2(\partial\omega)} \le c(h^{-\frac{3}{2}}\delta+h^{\frac{1}{2}+q}).
    \end{align*}
    Combining the preceding estimates gives the desired estimate on $\|\Delta' u^\dagger(t_n) -\Delta_h' z^\delta(t_n)  \|_{L^2(\omega)}$.
\end{proof}

\begin{remark}\label{rmk:dis_noise}
Lemma \ref{lem:dis_noise} relies on the additional regularity provided by Assumption \ref{assum:noise}. The requirement $u^\dagger(\cdot, x', \ell) \in C^2([0, T]; L^2(\omega))$ is restrictive.
    By the argument  of \cite[Lemma 4.3]{JinShinZhou:2023}, one can still derive the error estimate if
\begin{equation}\label{eqn:weak-1}
u^\dagger(t,x',\ell)\in C^\alpha([0,T];L^2(\omega))\cap  C^2((0,T];L^2(\omega)),
\end{equation}
and there exists $t_0\in(0,T)$ such that 
\begin{equation}\label{eqn:weak-2}
\|\partial_t^\alpha u^\dagger(t,\cdot,\ell)\|_{L^2(\omega)}\le c \quad \text{and}\quad\|\partial_t\partial_t^\alpha u^\dagger(t,\cdot,\ell)\|_{L^2(\omega)}\le ct^{-1}\quad \forall t\in(0,t_0).
\end{equation}
Then the following estimate holds
\begin{equation}\label{eqn:weak-err}
\|(\partial_t^\alpha u^\dagger(t_n)- \bar{\partial}_\tau^\alpha z^\delta(t_n) )_{n=1}^{N}\|_{ \ell^2(L^2(\omega)) }\le c(\tau^{-\alpha}\delta+\tau^{\frac{1}{2}}|\log \tau|).
    \end{equation} 
Moreover, conditions \eqref{eqn:weak-1} and \eqref{eqn:weak-2} hold if the source $\mathcal{F}=f^\dagger R$ satisfies
$$\mathcal{F},\partial_d\mathcal{F}\in C([0,T];H^q(\Omega))\cap W^{2,p}(0,T;L^2(\Omega)),\quad 
\text{with} ~~ p>1/\alpha~~ \text{and} ~~q\in(0,1].$$ Then the compatibility conditions
$\mathcal{F}|_{t=0}=\partial_d\mathcal{F}|_{t=0}=\partial_t\mathcal{F}|_{t=0}=\partial_t\partial_d\mathcal{F}|_{t=0}=0$
are not needed, which is much weaker than Assumption~\ref{assum:noise}(i), at the expense of only $\ell^2(L^2(\Omega))$ bound and a temporal convergence rate $O(\tau^{1/2})$.
\end{remark} 
\vskip5pt

Next, we propose a fully discrete iterative scheme for recovering the source $f^\dagger(t,x^\prime)$, based on the   formula  \eqref{eqn:recon}. For an initial guess $f^0=(f^{0,n})_{n=1}^N $, consider the following iteration scheme, with the update $f^{k+1}=(f^{k+1,n})_{n=1}^N $ from $f^{k}=(f^{k,n})_{n=1}^N$ given by
\begin{equation}\label{eqn:recon_dis}
        f^{k+1}(t,x')=\frac{\bar{\partial}_\tau^\alpha z^\delta-\Delta_h' z^\delta}{R}(t,x')-\frac{\partial_d w_h^n(f^k)}{R}(t,x'),
\end{equation}
where $w_h^n(f^k)$ solves problem \eqref{eqn:fully-dis-w} (with $f\equiv f^k$).  To analyze the convergence of the algorithm, we define the  weighted $\ell^2$ norm. For $\lambda>0$, let
\begin{equation}\label{eqn:dis_weighted_norm}
    \|v\|_{\ell^2_\lambda }=\Big(\tau \sum_{n=1}^{\infty}  |e^{-\lambda t_n } v^n|^2  \Big)^{\frac{1}{2}}.
\end{equation}
For a finite sequence $(v^n)_{n=1}^{N}$, we denote $\|(v^n)_{n=1}^{N}\|_{\ell_\lambda^2}:=\|(v^n)_{n=1}^{\infty}\|_{\ell_\lambda^2}$, by setting $v^n=0$ for $n>N$. Note for a finite sequence, the  $\ell^2_\lambda$ norm is equivalent to the standard $\ell^2$ norm.  Similarly, for a Banach space $X$, we denote by $\|v\|_{\ell^2_\lambda( X)} $  the weighted norm for the vector-valued space $\ell^2_\lambda(X)$.

The next lemma  gives the stability in the $\ell^2_\lambda$ norm for  $u_h^n $ and $w_h^n $.
\begin{lemma}\label{lem:dis_forward_stab}
    Let Assumption \ref{assum:reg} hold. Let $f_1,f_2\in\ell^2(L^2(\omega))$, and  $w_h^n(f_i)$ be the solution to problem \eqref{eqn:fully-dis-w}. Then for  $\gamma\in(\frac{3}{4},1)$,  there exists $c>0$  independent of $h$, $\tau$, $N$ and $\lambda$ such that
    \begin{equation*}
         \|\partial_d w_{1,h}^n-\partial_dw_{2,h}^n \|_{\ell^2_\lambda(L^2(\omega))}\le c   (\tau^{(1-\gamma)\alpha}+ \lambda^{(\gamma-1)\alpha} )   \|   f_1 -  f_2 \|_{\ell^2_\lambda(L^2(\omega))}.
    \end{equation*} 
\end{lemma}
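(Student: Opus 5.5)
By linearity, $w_{1,h}^n-w_{2,h}^n=w_h^n(f)$ with $f=f_1-f_2$. The representation \eqref{eqn:dis-solrep} then gives
\begin{equation*}
\partial_d (w_{1,h}^n-w_{2,h}^n)=\tau\sum_{j=1}^n \partial_d E_{h,\tau}^{n-j}P_h(f^j\partial_d R^j).
\end{equation*}
I will bound each summand pointwise in $n$ and then take the weighted $\ell^2_\lambda$ norm with a discrete Young inequality. This mirrors the continuous argument in Theorem \ref{thm:stab} and Proposition \ref{prop:recon}.

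\emph{Pointwise bound.} Lemma \ref{lem:dis_embedding} bounds the trace norm by the fractional discrete Laplacian, and Lemma \ref{lem:dis-sol-op} then gives the smoothing estimate. Using also the $L^2$-stability of $P_h$ and $\partial_d R\in L^\infty$, we get
\begin{equation*}
\|\partial_d E_{h,\tau}^{n-j}P_h(f^j\partial_dR^j)\|_{L^2(\omega)}\le c\|(-\Delta_h)^\gamma E_{h,\tau}^{n-j}P_h(f^j\partial_dR^j)\|_{L^2(\Omega)}\le c\,t_{n-j+1}^{(1-\gamma)\alpha-1}\|f^j\|_{L^2(\omega)}.
\end{equation*}
Here $f^j$ is extended constantly in $x_d$, so its $L^2(\Omega)$ norm is $\sqrt{2\ell}\,\|f^j\|_{L^2(\omega)}$. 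Consequently
\begin{equation*}
\|\partial_d w_h^n(f)\|_{L^2(\omega)}\le c\,\tau\sum_{j=1}^n \kappa^{n-j}\|f^j\|_{L^2(\omega)},\qquad \kappa^m:=t_{m+1}^{(1-\gamma)\alpha-1}.
\end{equation*}

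\emph{Weighted Young inequality.} Multiplying by $e^{-\lambda t_n}=e^{-\lambda t_{n-j}}e^{-\lambda t_j}$ turns this into a discrete convolution of the sequences $(e^{-\lambda t_m}\kappa^m)_{m\ge0}$ and $(e^{-\lambda t_j}\|f^j\|)_{j\ge1}$. The discrete Young inequality $\|a*b\|_{\ell^2}\le\|a\|_{\ell^1}\|b\|_{\ell^2}$, with the $\tau$ weights carried along, yields
\begin{equation*}
\|\partial_d w_h(f)\|_{\ell^2_\lambda(L^2(\omega))}\le c\Big(\tau\sum_{m=0}^\infty e^{-\lambda t_m}t_{m+1}^{(1-\gamma)\alpha-1}\Big)\|f\|_{\ell^2_\lambda(L^2(\omega))}.
\end{equation*}

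\emph{Main step: the kernel sum.} I split the sum into $m=0$ and $m\ge1$, in analogy with \eqref{eqn:int_ett}.
\begin{itemize}
\item For $m=0$ the term is $\tau\cdot\tau^{(1-\gamma)\alpha-1}=\tau^{(1-\gamma)\alpha}$.
\item For $m\ge1$, the function $s^{(1-\gamma)\alpha-1}$ is decreasing, so $t_{m+1}^{(1-\gamma)\alpha-1}\le s^{(1-\gamma)\alpha-1}$ for $s\in[t_m,t_{m+1}]$. Also $e^{-\lambda t_m}\le e^{\lambda\tau}e^{-\lambda s}$ on this interval. Hence
\begin{equation*}
\tau\sum_{m\ge1}e^{-\lambda t_m}t_{m+1}^{(1-\gamma)\alpha-1}\le e^{\lambda\tau}\int_0^\infty e^{-\lambda s}s^{(1-\gamma)\alpha-1}\,\d s=e^{\lambda\tau}\Gamma((1-\gamma)\alpha)\lambda^{-(1-\gamma)\alpha}.
\end{equation*}
\end{itemize}

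The factor $e^{\lambda\tau}$ is the one delicate point, because the constant must be independent of $\lambda$ and $\tau$. I remove it by comparing instead with the left neighbour. For $m\ge1$ and $s\in[t_{m-1},t_m]$ we have $e^{-\lambda t_m}\le e^{-\lambda s}$ and $t_{m+1}\ge t_m\ge s$, so $t_{m+1}^{(1-\gamma)\alpha-1}\le s^{(1-\gamma)\alpha-1}$. This bounds the $m\ge1$ sum by $\int_0^\infty e^{-\lambda s}s^{(1-\gamma)\alpha-1}\d s\le c\lambda^{-(1-\gamma)\alpha}$ with no extra factor.

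Combining the two parts gives the constant $c(\tau^{(1-\gamma)\alpha}+\lambda^{(\gamma-1)\alpha})$. This constant is independent of $h$, $\tau$, $N$ and $\lambda$, and the estimate follows.
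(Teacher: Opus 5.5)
Your proposal is correct and follows the paper's route: the representation \eqref{eqn:dis-solrep}, then Lemmas \ref{lem:dis_embedding} and \ref{lem:dis-sol-op} for the pointwise kernel $t_{n-j+1}^{(1-\gamma)\alpha-1}$, then the weighted discrete Young inequality, and finally the split of the kernel sum into the $m=0$ term and an integral comparison. Your left-neighbour comparison on $[t_{m-1},t_m]$ is what justifies the paper's terse step $\tau\sum_{n\ge2}e^{-\lambda t_{n-1}}t_n^{(1-\gamma)\alpha-1}\le c\int_0^T e^{-\lambda s}s^{(1-\gamma)\alpha-1}\,\d s$ without a stray $e^{\lambda\tau}$ factor, and your first term $\tau^{(1-\gamma)\alpha}$ is correct (the paper's display has the typo $\tau^{(1-\gamma)\alpha-1}$).
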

\begin{proof}
    Let $w_{i,h}^n=w_h^n(f_i)$ and $e_h^n=w_{1,h}^n-w_{2,h}^n$. The solution representation \eqref{eqn:dis-solrep} implies   
    \begin{equation*}
        e_h^n=\tau \sum _{j=1}^{n} E_{h,\tau}^{n-j}\big( (f_1^j-f_2^j)\partial_d R^j  \big).
    \end{equation*}
Lemmas \ref{lem:dis-sol-op} and \ref{lem:dis_embedding} and the  upper bound of $ \partial_d R $ in Assumption \ref{assum:reg}  yield
    \begin{align*}
     \|\partial_d e_h^n \|_{L^2(\omega)}
       \le c  \| (-\Delta)^{\gamma}e_h^n \|_{L^2(\Omega)}  
      \le  c \tau \sum_{j=1}^n (t_{n+1}-t_j)^{(1-\gamma)\alpha-1}  \| f_1^j-f_2^j  \|_{L^2(\omega)}.
    \end{align*} 
    Multiplying $e^{-\lambda t_n}$ on both sides, taking the   $\ell^2_\lambda$ norm and applying Young’s inequality lead to 
    \begin{align*}
        \|\partial_d e_h^n \|_{\ell^2(L^2(\omega))}\le &c \Big(\tau \sum_{n=1}^N \big|e^{-\lambda t_n} \tau \sum_{j=1}^n (t_{n+1}-t_j)^{(1-\gamma)\alpha-1} \|   f_1^j -  f_2^j \|_{L^2(\omega)} \big|^2\Big)^{\frac{1}{2}}\\
        \le &c \Big(\tau \sum_{n=1}^N \big| \tau \sum_{j=1}^n  e^{-\lambda (t_n-t_j)} (t_{n+1}-t_j)^{(1-\gamma)\alpha-1}  e^{-\lambda t_j}  \|   f_1^j -  f_2^j \|_{L^2(\omega)}  \big|^2\Big)^{\frac{1}{2}}\\
        \le & c \Big( \tau \sum_{n=1}^N e^{-\lambda t_{n-1}} t_n^{(1-\gamma)\alpha-1}  \Big) \|   f_1^j -  f_2^j \|_{\ell^2_\lambda(L^2(\omega))}.
    \end{align*}
Using the estimate \eqref{eqn:int_ett}, we arrive at 
\begin{align*}
    \tau \sum_{n=1}^N e^{-\lambda t_{n-1}} t_n^{(1-\gamma)\alpha-1} = &\tau^{(1-\gamma)\alpha-1}+ \tau \sum_{n=2}^N e^{-\lambda t_{n-1}} t_n^{(1-\gamma)\alpha-1} \\
    \le &\tau^{(1-\gamma)\alpha-1}+c  \int_0^T e^{-\lambda s}s^{(1-\gamma)\alpha-1}\d s \le c  (\tau^{(1-\gamma)\alpha}+ \lambda^{(\gamma-1)\alpha} ).
\end{align*} 
Therefore, we conclude that
    \begin{equation*}
        \|\partial_d e_h^n\|_{\ell^2(L^2(\omega))}\le    c   (\tau^{(1-\gamma)\alpha}+ \lambda^{(\gamma-1)\alpha} )   \|   f_1^j -  f_2^j \|_{\ell^2_\lambda(L^2(\omega))}  .
    \end{equation*}
\end{proof}

The next theorem gives the convergence of the  scheme \eqref{eqn:recon_dis}.
\begin{theorem}\label{thm:recon_dis}
Let Assumption \ref{assum:reg} hold.  Then for sufficiently large $\lambda>0$, there exists $\tau_0>0$, such that for all $0<\tau \le \tau_0$, given  $f^0=(f^{0,n})_{n=1}^{N}\in \ell^2_\lambda(L^2(\omega))$,  the   scheme \eqref{eqn:recon_dis} converges to a unique limit $f^*=(f^{*,n})_{n=1}^{N}\in  \ell^2_\lambda(L^2(\omega))$, and moreover, there exist $\gamma\in(\frac{3}{4},1)$ and $c>0$ independent of $h$, $\tau$, $N$ and $\lambda$ such that \begin{equation}\label{eqn:recon_dis_conv}
        \|f^k-f^*\|_{\ell^2_\lambda(L^2(\omega))}\le (c\tau^{(1-\gamma)\alpha } + c\lambda^{(\gamma-1)\alpha })^k\| f^0-f^* \|_{\ell^2_\lambda(L^2(\omega))}.
    \end{equation} 
\end{theorem}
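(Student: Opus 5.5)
The proof is a discrete analogue of Proposition \ref{prop:recon}: we show that the discrete fixed-point map is a contraction in $\ell^2_\lambda(L^2(\omega))$ and apply the Banach fixed point theorem. Define $K_{h,\tau}\colon \ell^2_\lambda(L^2(\omega))\to \ell^2_\lambda(L^2(\omega))$ (sequences indexed by $n=1,\dots,N$) by
\begin{equation*}
(K_{h,\tau} f)^n = \frac{\bar{\partial}_\tau^\alpha (z^\delta)^n-\Delta_h' (z^\delta)^n}{R^n}-\frac{\partial_d w_h^n(f)}{R^n},
\end{equation*}
with the traces evaluated on $\omega\times\{\ell\}$. The scheme \eqref{eqn:recon_dis} then reads $f^{k+1}=K_{h,\tau}f^k$.

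\textbf{Step 1: the map is well defined.} The data term does not depend on $f$. It is a finite sequence of $L^2(\omega)$ functions: $\bar\partial_\tau^\alpha z^\delta$ is a finite combination of $L^2(\omega)$ functions, and $\Delta_h'z^\delta\in V_h^\partial$. Assumption \ref{assum:reg} gives $|R(t,x',\ell)|\ge c_R$, so division by $R^n$ is bounded on $L^2(\omega)$. Since $w_h^n(f)\in V_h^0$ is piecewise linear, its elementwise trace $\partial_d w_h^n(f)|_{\omega}$ lies in $L^2(\omega)$. Hence $K_{h,\tau}$ maps the finite-dimensional-in-time space $\ell^2_\lambda(L^2(\omega))$ into itself.

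\textbf{Step 2: the Lipschitz bound.} In $K_{h,\tau}f_1-K_{h,\tau}f_2$ the data term cancels, because $w_h^n$ depends linearly on $f$. We get
\begin{equation*}
\|K_{h,\tau}f_1-K_{h,\tau}f_2\|_{\ell^2_\lambda(L^2(\omega))}\le c_R^{-1}\|\partial_d w_{h}^n(f_1)-\partial_d w_{h}^n(f_2)\|_{\ell^2_\lambda(L^2(\omega))}\le c(\tau^{(1-\gamma)\alpha}+\lambda^{(\gamma-1)\alpha})\|f_1-f_2\|_{\ell^2_\lambda(L^2(\omega))},
\end{equation*}
by Lemma \ref{lem:dis_forward_stab}, with $\gamma\in(\frac34,1)$ fixed and $c$ independent of $h,\tau,N,\lambda$.

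This step is the main point to check. Lemma \ref{lem:dis_forward_stab} uses Lemma \ref{lem:dis_embedding}, which requires $(-\Delta_h)^\gamma$ acting on $V_h^0$; this holds since $w_h^n\in V_h^0$. It also uses the $h$-uniform smoothing in Lemma \ref{lem:dis-sol-op}. The lemma's proof shows the sum is bounded by $\tau^{(1-\gamma)\alpha}$ (the $\tau^{(1-\gamma)\alpha-1}$ in the displayed line should carry an extra factor $\tau$, coming from the $j=n$ term $\tau\cdot\tau^{(1-\gamma)\alpha-1}$). The other contribution comes from the convolution sum $\tau\sum_{n\ge2}e^{-\lambda t_{n-1}}t_n^{(1-\gamma)\alpha-1}$, which is compared to the integral \eqref{eqn:int_ett} using monotonicity of the integrand.

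\textbf{Step 3: choosing the parameters and concluding.} Choose $\lambda$ large so that $c\lambda^{(\gamma-1)\alpha}\le\frac14$. Then choose $\tau_0$ so that $c\tau_0^{(1-\gamma)\alpha}\le \frac14$. The contraction constant is then $L:=c\tau^{(1-\gamma)\alpha}+c\lambda^{(\gamma-1)\alpha}\le \frac12<1$ for all $\tau\le\tau_0$, uniformly in $h$ and $N$. The space $\ell^2_\lambda(L^2(\omega))$ with finitely many indices is complete, so the Banach fixed point theorem gives a unique fixed point $f^*=K_{h,\tau}f^*$ and convergence of $f^k$ from any $f^0$. Finally, apply Step 2 with $f_1=f^k$ and $f_2=f^*$ to get $\|f^{k+1}-f^*\|\le L\|f^k-f^*\|$. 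Induction on $k$ yields \eqref{eqn:recon_dis_conv}.
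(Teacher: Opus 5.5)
Your proposal is correct and follows essentially the same route as the paper: define $K_{h,\tau}$, get the Lipschitz constant $c(\tau^{(1-\gamma)\alpha}+\lambda^{(\gamma-1)\alpha})$ from Lemma \ref{lem:dis_forward_stab}, make it less than one by taking $\lambda$ large and then $\tau_0$ small (the paper uses $\frac13$ where you use $\frac14$), and conclude with the Banach fixed-point theorem. Your correction that the $n=1$ term in the proof of Lemma \ref{lem:dis_forward_stab} should be $\tau^{(1-\gamma)\alpha}$ is right, and so is your sign $-\partial_d w_h^n(f)/R^n$ in $K_{h,\tau}$ (the paper's proof writes $+$, which does not affect the contraction estimate).
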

\begin{proof}
We define the map $K_{h,\tau}:\ell^2_\lambda(L^2(\omega))\to \ell^2_\lambda(L^2(\omega))$ by 
    \begin{equation*}
        K_{h,\tau}(f)=\frac{\bar{\partial}_\tau^\alpha z^\delta-\Delta_h' z^\delta}{R} +\frac{\partial_d w_h^n(f^k)}{R} .
    \end{equation*}
    For any $f^i=(f^{i,n})_{n=1}^N\in \ell^2_\lambda(L^2(\omega)) $, by   Lemma \ref{lem:dis_forward_stab} and Assumption \ref{assum:reg}, we have
    \begin{align*}
        \|K_{h,\tau}f^1-K_{h,\tau}f^2\|_{\ell^2_\lambda(L^2(\omega))}= &\|R^{-1}\partial_d(w_{h}^{1,n} -w_{h}^{2,n}   )\|_{\ell^2_\lambda(L^2(\omega))}\le  c (\tau^{(1-\gamma)\alpha}+ \lambda^{(\gamma-1)\alpha} ) \| f^{1} -  f^{2}  \|_{\ell^2_\lambda(L^2(\omega))}.  
    \end{align*}
    We choose a sufficiently large $\lambda$ such that $c\lambda^{(\gamma-1)\alpha}\le \frac{1}{3}$ and then choose a sufficiently small $\tau_0$ such that $c\tau_0^{(1-\gamma)\alpha}\le \frac{1}{3}$.
    Thus, $K_{h,\tau}$ is a contraction map on the space $\ell^2_\lambda(L^2(\omega))$ for all $0<\tau\le \tau_0$. The Banach fixed-point theorem implies that the iteration converges to a unique limit $f^*=(f^{*,n})_{n=1}^{N}\in \ell^2_\lambda(L^2(\omega))$ and the estimate \eqref{eqn:recon_dis_conv} holds.  
\end{proof}

Last, we derive an error bound on the recovered source $f^*$.
\begin{theorem}\label{thm:error}
    Let Assumption \ref{assum:reg}, \ref{assum:reg-num} and \ref{assum:noise} hold. Let $f^\dagger$ be the exact source and $f^*$ be the limit of the   scheme \eqref{eqn:recon_dis}. Then  there exists $\tau_0>0$ such that for all $0<\tau\le \tau_0$, there exists $c>0$ independent of $h$, $\tau$, $\delta$ and $N$ such that \begin{equation}\label{eqn:error}
        \| ((f^*)^n-(f^\dagger)^n)_{n=1}^{N} \|_{ \ell^2(L^2(\omega))}\le  c(\tau^{-\alpha}\delta+\tau+ h^{-2}\delta+h^{\min(q,\frac{1}{2})}   ).
    \end{equation}   
\end{theorem}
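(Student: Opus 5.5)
The plan is to compare the discrete fixed point $f^*=K_{h,\tau}f^*$ with the exact source sampled on the grid, $(f^\dagger)^n:=f^\dagger(t_n)$. The exact source satisfies the pointwise identity \eqref{eqn:f-repre} on $\omega\times\{\ell\}$:
\begin{equation*}
(f^\dagger)^n=\frac{\partial_t^\alpha u^\dagger(t_n)-\Delta' u^\dagger(t_n)-\partial_d w^\dagger(t_n)}{R^n}.
\end{equation*}
Subtracting this from $f^{*,n}=\frac{\bar\partial_\tau^\alpha (z^\delta)^n-\Delta_h'(z^\delta)^n-\partial_d w_h^n(f^*)}{R^n}$ and using $|R(t,x',\ell)|\ge c_R$ from Assumption \ref{assum:reg}, we get pointwise in $n$
\begin{align*}
\|f^{*,n}-(f^\dagger)^n\|_{L^2(\omega)}\le{}& c\|\bar\partial_\tau^\alpha (z^\delta)^n-\partial_t^\alpha u^\dagger(t_n)\|_{L^2(\omega)}+c\|\Delta_h'(z^\delta)^n-\Delta' u^\dagger(t_n)\|_{L^2(\omega)}\\
&+c\|\partial_d w_h^n(f^\dagger)-\partial_d w^\dagger(t_n)\|_{L^2(\omega)}+c\|\partial_d w_h^n(f^*)-\partial_d w_h^n(f^\dagger)\|_{L^2(\omega)}.
\end{align*}
Here $w_h^n(f^\dagger)$ denotes the fully discrete solution \eqref{eqn:fully-dis-w} driven by the sampled exact source. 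The first two terms are bounded by Lemma \ref{lem:dis_noise}, giving $c(\tau^{-\alpha}\delta+\tau+h^{-2}\delta+h^q)$ uniformly in $n$. Their $\ell^2_\lambda(L^2(\omega))$ norm is then bounded by the same quantity times $T^{1/2}$.

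The third term is the consistency error of the direct solver, which Lemma \ref{lem:wnh-wdag} controls. Under Assumption \ref{assum:noise}(i) we have $\partial_d\mathcal F(0)=0$. By Remark \ref{rmk:wnh-wdag}, the singular factor $t_n^{(1-\gamma)\alpha-1}$ then drops out of the $\mathrm{II}_1^n$ contribution. For the $\mathrm{II}_2^n$ part I would use $\partial_d\mathcal F'\in L^p$ with $p>1/((1-\gamma)\alpha)$, as in Assumption \ref{assum:reg-num}, together with Hölder's inequality, to obtain a uniform bound $c(h^{1/2}+\tau)$. Even without this refinement, the bound $\tau t_n^{(1-\gamma)\alpha-1}$ is square summable in the $\ell^2$ sense only if $(1-\gamma)\alpha>1/2$, which fails in general. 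So I will rely on the compatibility condition and take the uniform bound $c(h^{1/2}+\tau)$.

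The fourth term is absorbed using Lemma \ref{lem:dis_forward_stab}, the same contraction estimate underlying Theorem \ref{thm:recon_dis}:
\begin{equation*}
\|\partial_d w_h^n(f^*)-\partial_d w_h^n(f^\dagger)\|_{\ell^2_\lambda(L^2(\omega))}\le c(\tau^{(1-\gamma)\alpha}+\lambda^{(\gamma-1)\alpha})\|f^*-f^\dagger\|_{\ell^2_\lambda(L^2(\omega))}.
\end{equation*}
First fix $\lambda$ so large that $c\lambda^{(\gamma-1)\alpha}\le\frac13$, and then choose $\tau_0$ so small that $c\tau_0^{(1-\gamma)\alpha}\le\frac13$. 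The fourth term can then be moved to the left-hand side, which leaves $\frac13\|f^*-f^\dagger\|_{\ell^2_\lambda}$ bounded by the sum of the first three contributions.

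Finally, because $\lambda$ is now a fixed constant, $e^{-\lambda T}\le e^{-\lambda t_n}\le 1$, so the $\ell^2_\lambda$ and $\ell^2$ norms are equivalent with constants independent of $h,\tau,\delta,N$. This yields \eqref{eqn:error} with $h^{q}+h^{1/2}\le c h^{\min(q,1/2)}$.

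The main obstacle is the third term: I need a bound on $\|\partial_d w_h^n-\partial_d w^\dagger(t_n)\|_{L^2(\omega)}$ that is uniform in $n$, or at least $\ell^2$-summable at the rate $\tau$. This means checking that the compatibility conditions in Assumption \ref{assum:noise}(i) remove the initial-layer term $\tau t_n^{(1-\gamma)\alpha-1}$. A secondary check is that all the relevant data values (the $z^\delta$ contributions and $R^n$) are evaluated consistently at $t_n$, and that the constant from Lemma \ref{lem:dis_forward_stab} does not depend on $h$, which holds because Lemma \ref{lem:dis_embedding} is $h$-uniform.
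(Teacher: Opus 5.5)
Your proposal is correct and follows the paper's proof essentially step for step. You use the same split into the two data terms (Lemma \ref{lem:dis_noise}), the consistency term $\partial_d w_h^n(f^\dagger)-\partial_d w^\dagger(t_n)$ (Lemma \ref{lem:wnh-wdag} with Remark \ref{rmk:wnh-wdag}, using $\partial_d\mathcal F(0)=0$), and the term $\partial_d w_h^n(f^*)-\partial_d w_h^n(f^\dagger)$, which is absorbed via Lemma \ref{lem:dis_forward_stab} after fixing $\lambda$ large and $\tau_0$ small; norm equivalence then finishes the argument.

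One correction to your aside: without the compatibility condition, the sequence $(\tau t_n^{(1-\gamma)\alpha-1})_n$ is still square summable, with $\ell^2$ norm $O(\tau^{(1-\gamma)\alpha+\frac12})$ (exactly as in Remark \ref{rmk:error}). So the compatibility condition is needed to recover the full rate $O(\tau)$, not to rescue summability.
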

\begin{proof}
    Note that the exact source $f^\dagger$ satisfies
    \begin{equation*}
        f^\dagger=\frac{  \partial_t^\alpha u^\dagger-\Delta'u^\dagger}{R}- \frac{  \partial_d w^\dagger}{R},
    \end{equation*}
with   $w^\dagger=\partial_d u^\dagger$ satisfying \eqref{eqn:dudx}. Since $f^*$ is the fixed point of the scheme \eqref{eqn:recon_dis}, we obtain
    \begin{align*}
        (f^*)^n-(f^\dagger)^n
        &= \frac{\bar{\partial}_\tau^\alpha (z^\delta)^n-\partial_t^\alpha u^\dagger(t_n)}{R} +\frac{ \Delta'u^\dagger(t_n) - \Delta_h' (z^\delta)^n }{R} +\frac{\partial_d w^\dagger(t_n) - \partial_d w_h^n(f^*)}{R} \\  &:=\mathrm{I}^n+\mathrm{II}^n+\mathrm{III}^n.
    \end{align*}
    By Lemma \ref{lem:dis_noise} and the positive lower bound $|R(t,x',\ell)|\ge c_R>0$ from Assumption \ref{assum:reg}, we have
    \begin{equation*}
        \| \mathrm{I}^n \|_{ L^2(\omega)}
        \le  c(\tau^{-\alpha}\delta+\tau )\quad \text{and}\quad  \| \mathrm{II}^n \|_{ L^2(\omega)}\le  c(h^{-2}\delta+h^q ),
    \end{equation*}
    where $c>0$ is independent of $h$, $\tau$, $t_n$ and $\delta$. For the term $\mathrm{III}^n$, by the triangular inequality, Lemma \ref{lem:wnh-wdag}, Remark \ref{rmk:wnh-wdag} and Lemma \ref{lem:dis_forward_stab}, we have  
    \begin{align*}
    &\quad \| (\mathrm{III}^n)_{n=1}^{N} \|_{\ell^2_\lambda( L^2(\omega))}\\
        & \le c\, \|(\partial_d w_h^n(f^*) -\partial_d w_h^n(f^\dagger) )_{n=1}^{N}\|_{\ell^2_\lambda( L^2(\omega))}+ c\, \|(\partial_d w_h^n(f^\dagger) -\partial_d w^\dagger(t_n) )_{n=1}^{N}\|_{\ell^2_\lambda( L^2(\omega))}\\
       &  \le c\, (\tau^{(1-\gamma)\alpha}+\lambda^{(\gamma-1)\alpha})\| ( (f^*)^n -  (f^\dagger)^n )_{n=1}^N\|_{\ell^2_\lambda(L^2(\omega))}+c\,(h^{\frac{1}{2}}+ \tau ),
    \end{align*} 
    with $\gamma\in (\frac{3}{4},1)$ and  $c>0$   independent of $h$, $\tau$, $t_n$, $ \delta$, $\lambda$ and $\gamma$. {\color{blue}Therefore, we choose $\tau_0$ such that $c\tau_0^{(1-\gamma)\alpha}\le \frac{1}{3}$ and choose $\lambda$ such that $c\lambda^{(\gamma-1)\alpha}<\frac{1}{3}$.}  By combining the estimates of  $\mathrm{I}^n$ and $\mathrm{II}^n$, taking $\ell_\lambda^2$ norm, we get
    \begin{equation*}
        \| ((f^*)^n-(f^\dagger)^n)_{n=1}^{N} \|_{ \ell^2_\lambda(L^2(\omega))}\le  c(\tau^{-\alpha}\delta+\tau+ h^{-2}\delta+h^{\min(q,\frac{1}{2})}   ).
    \end{equation*}
     The desired estimate follows from the equivalence of the $\|\cdot\|_{\ell^2_\lambda(L^2(\omega))} $  and $\|\cdot\|_{\ell^2(L^2(\omega))} $ norms.
\end{proof}
\begin{remark}\label{rmk:error}
One may derive an error bound  under the  weaker regularity  in Remark \ref{rmk:dis_noise}. Indeed, Lemma \ref{lem:dis_noise} provides
\begin{align*}
\|\partial_t^\alpha u^\dagger(t_n)- \bar{\partial}_\tau^\alpha z^\delta(t_n) \|_{ L^2(\omega) }&\le c(\tau^{-\alpha}\delta+\tau^{\frac{1}{2}}|\log\tau|),\\
 \|  \Delta' u^\dagger(t)-\Delta_h'z^\delta(t) \|_{ L^2(\omega)}&\le c(h^{-2}\delta+h^q.
\end{align*}
By Lemma \ref{lem:wnh-wdag} and taking the weighted $\ell_\lambda^2$ norm,
    \begin{align*}
        \|(\partial_d w_h^n(f^\dagger) -\partial_d w^\dagger(t_n) )_{n=1}^{N}\|_{\ell^2_\lambda( L^2(\omega))}
        & \le ch^{\frac{1}{2}}+c\Big(\tau\sum_{n=1}^{\infty}|e^{-\lambda t_n}\tau  t_n^{(1-\gamma)\alpha-1}|^2\Big)^{\frac{1}{2}}  \\ & \le c(h^{\frac{1}{2}}+\tau^{(1-\gamma)\alpha+\frac{1}{2}}).
    \end{align*}
    Hence, we arrive at  
    \begin{equation}\label{eqn:error_weak}
        \| ((f^*)^n-(f^\dagger)^n)_{n=1}^{N} \|_{ \ell^2_\lambda(L^2(\omega))}\le  c(\tau^{-\alpha}\delta+\tau^{\frac{1}{2}}|\log\tau|+ h^{-2}\delta+h^{\min(q,\frac{1}{2})}  ).
    \end{equation}
    The error estimates \eqref{eqn:error} and \eqref{eqn:error_weak} provide a guideline for choosing the discretization parameters $h$ and $\tau$.   In practice, it is essential to choose $h$ and $\tau$ such that the error of  numerical differentiation is properly balanced with the discretization error, i.e. $\tau^{-\alpha}\delta\sim \tau $, $h^{-2}\delta\sim h^{\min(q,\frac{1}{2})} $ in \eqref{eqn:error} and $\tau^{-\alpha}\delta\sim \tau^{\frac{1}{2}} $, $h^{-2}\delta\sim h^{\min(q,\frac{1}{2})} $ in \eqref{eqn:error_weak}. 
\end{remark}
\begin{remark}\label{rmk:error_high_reg}
The error estimate in Theorem \ref{thm:error} can be improved if the problem data has higher regularity: the $O(h^{\frac12})$ term can be refined by bounding the $L^\infty(\Omega)$ norm. If  $\mathcal{F},\partial_d \mathcal{F}\in C([0,T];L^\infty(\Omega))\cap W^{1,p}(0,T;L^\infty(\Omega))$, with $p>\frac{1}{\alpha}$, following \cite[Theorem 6.8]{Thomee:2006}, we can derive 
$$ \|\partial_d w_h^n-\partial_d w^\dagger(t_n)\|_{L^\infty(\omega)}\le c(h+\tau t_n^{(1-\gamma)\alpha-1}),$$ and, consequently, 
    \begin{equation*}
        \| ((f^*)^n-(f^\dagger)^n)_{n=1}^{N} \|_{ \ell^2_\lambda(L^2(\omega))}\le  c(\tau^{-\alpha}\delta+\tau^{\frac{1}{2}}|\log\tau|+ h^{-2}\delta+h^{\min(q,1)}  ).
    \end{equation*}
Moreover, given smooth data, higher-order finite element spaces and convolution quadrature based on higher-order backward differentiation formulas can be employed to achieve higher order convergence rates. However, the factors $\tau^{-\alpha}\delta$ and $h^{-2}\delta$ are inevitable, reflecting the ill-posedness of inherent {\rm\textbf{(ISP)}}. Since the measured data $z^\delta$ is often irregular, the analysis focuses on the case of minimal regularity. The sharpness of these   estimates will be numerically illustrated in Section \ref{sec:num}.
\end{remark}

\section{Numerical results and discussions}\label{sec:num}
Now we present numerical experiments to illustrate the reconstruction algorithm. Fix a square domain $\Omega = (0, 1)^2$ and $T=1$. The noisy data $z^\delta$ is measured on the boundary segment $(0, 1) \times \{1\}$ over the time interval $(0, T)$:
\begin{equation*}
    z^\delta(x_1,t)=u^\dagger(x_1,1,t)+\delta\| u^\dagger(\cdot,1,t)\|_{L^\infty(\omega)}\xi(x_1,t),
\end{equation*}
where $\delta$ denotes the relative noise level and $\xi$ follows the standard Gaussian distribution. The exact state $u^\dagger=u(f^\dagger)$  is computed on  a highly refined mesh  with $h=1/200$ and $\tau =1/1000$. The accuracy of a reconstruction $f^*$
relative to the exact one $f^\dagger$ is measured by the relative $L^2$ error  $e_f={\|f^*-f^\dagger\|_{\ell^2(L^2(\omega))}}/{\|f^\dagger\|_{L^2(0,T;L^2(\omega))}} $. 

The first example is about a smooth source.
\begin{example}\label{ex:smooth1}
     $R(x_1,x_2,t)=x_2$ and $f^\dagger(x_1,t)=\sin(\pi x_1) (1-\cos(4\pi t))t$.
\end{example}

First we verify the following error  bound in Theorem \ref{thm:error} and Remark \ref{rmk:error_high_reg}:
\begin{equation*}
    \| ((f^*)^n-(f^\dagger)^n)_{n=1}^{N} \|_{ \ell^2(L^2(\omega))}\le  c(\tau^{-\alpha}\delta+\tau+ h^{-2}\delta+ {h} ),
\end{equation*}
with $\alpha\in \{0.25,0.50,0.75,1\}$.
For $\delta=0$, Theorem \ref{thm:error} and Remark \ref{rmk:error_high_reg} predict a convergence rate $O(h)$ in space and $O(\tau)$ in time. Fig. \ref{Fig:conv_smooth1} (a)-(b) indicate an $O(h)$ empirical convergence rate in space and $O(\tau)$ in time, which aligns well with the theoretical prediction. 
To examine the convergence rate with respect to $\delta$, we first choose the parameters $h=\frac{1}{6}$ and $\tau=\frac{1}{20}$ for $\delta=\text{1e-2}$ and then refine the parameters as $h^{-2}\delta\sim h$ and $\tau^{-\alpha}\delta\sim\tau$  as the noise level $\delta $ decreases. Fig. \ref{Fig:conv_smooth1} (c) indicates an empirical rate $O(\delta^{0.33})$, which  aligns well with the theoretical one $O(\delta^{\frac{1}{3}})$.

Fig. \ref{Fig:recon_smooth1} presents the recovered source $f^*$ at three   noise levels $\delta\in\{\text{1e-2, 1e-3, 1e-4}\}$ and $\alpha=0.75$. 
Fig. \ref{Fig:err_evolution} (a) shows the variation of the errors $\|f^k-f^\dagger\|_{\ell^2(L^2(\omega))}/ \| f^\dagger\|_{\ell^2(L^2(\omega))}$ and $ 
\|f^k-f^\dagger\|_{\ell_\lambda^2(L^2(\omega))}/ \| f^\dagger\|_{\ell_\lambda^2(L^2(\omega))}$ (with  $\lambda=10$),
with respect to the iteration number $k$ at $\delta=\text{1e-4}$, initialized to $f^0=0$. The iteration is efficient: the reconstruction errors decay rapidly and stabilize within 10 iterations. The evolution profiles for the standard and weighted errors are nearly identical, indicating that the weighted norm $\|\cdot\|_{\ell^2_\lambda(L^2(\omega))}$ serves primarily as a theoretical tool.
\begin{figure}[htbp]
	\centering
	\begin{tabular}{ccc}
		\includegraphics[width=0.30\textwidth]{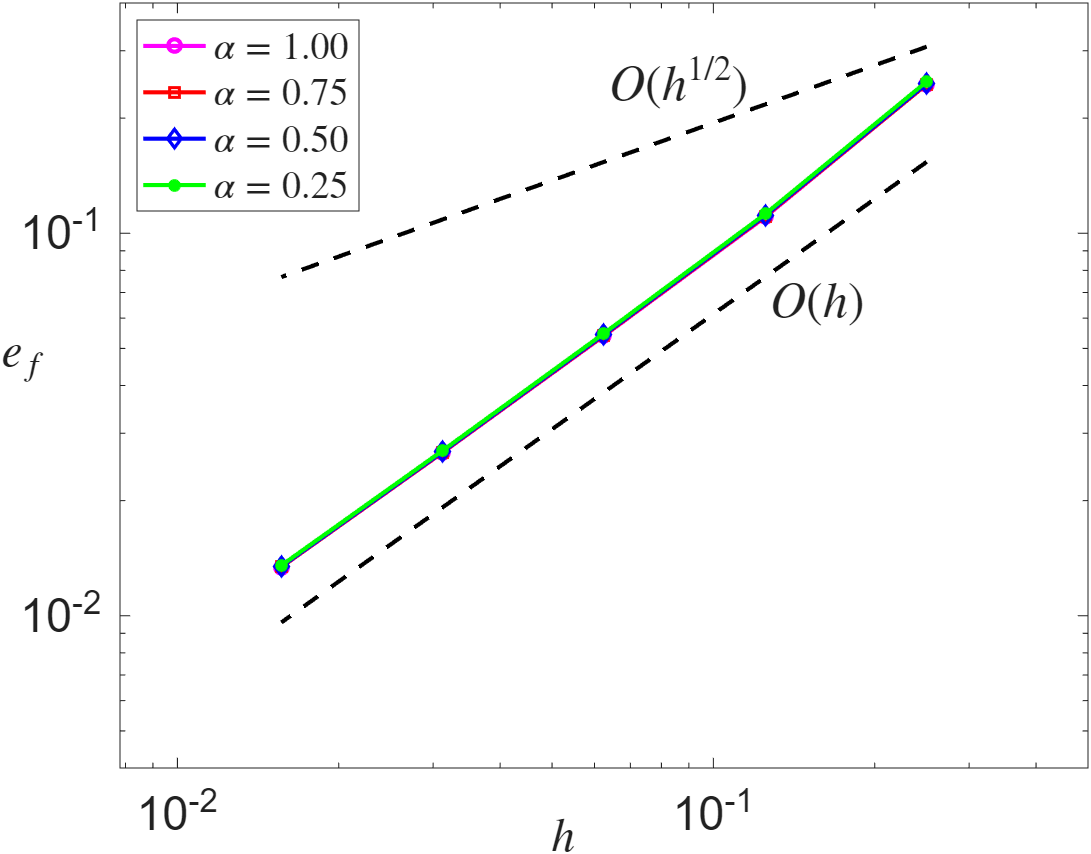}&
		\includegraphics[width=0.30\textwidth]{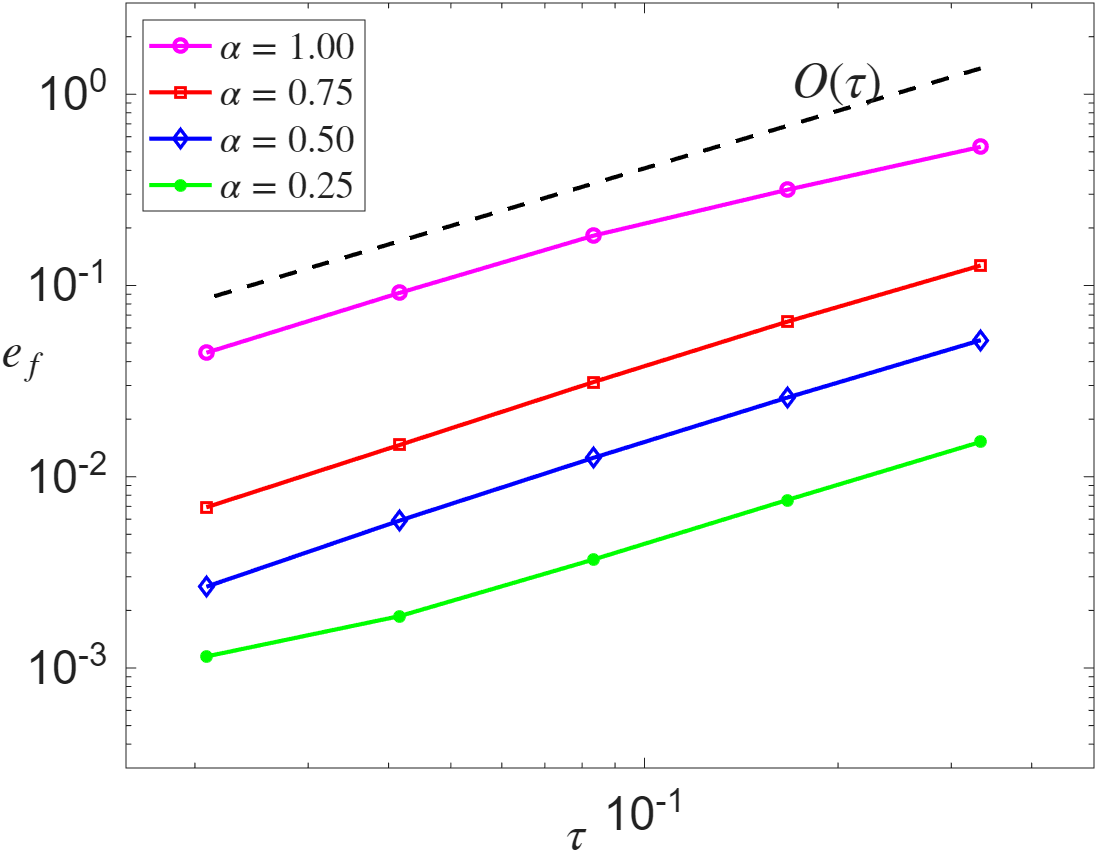}&
		\includegraphics[width=0.30\textwidth]{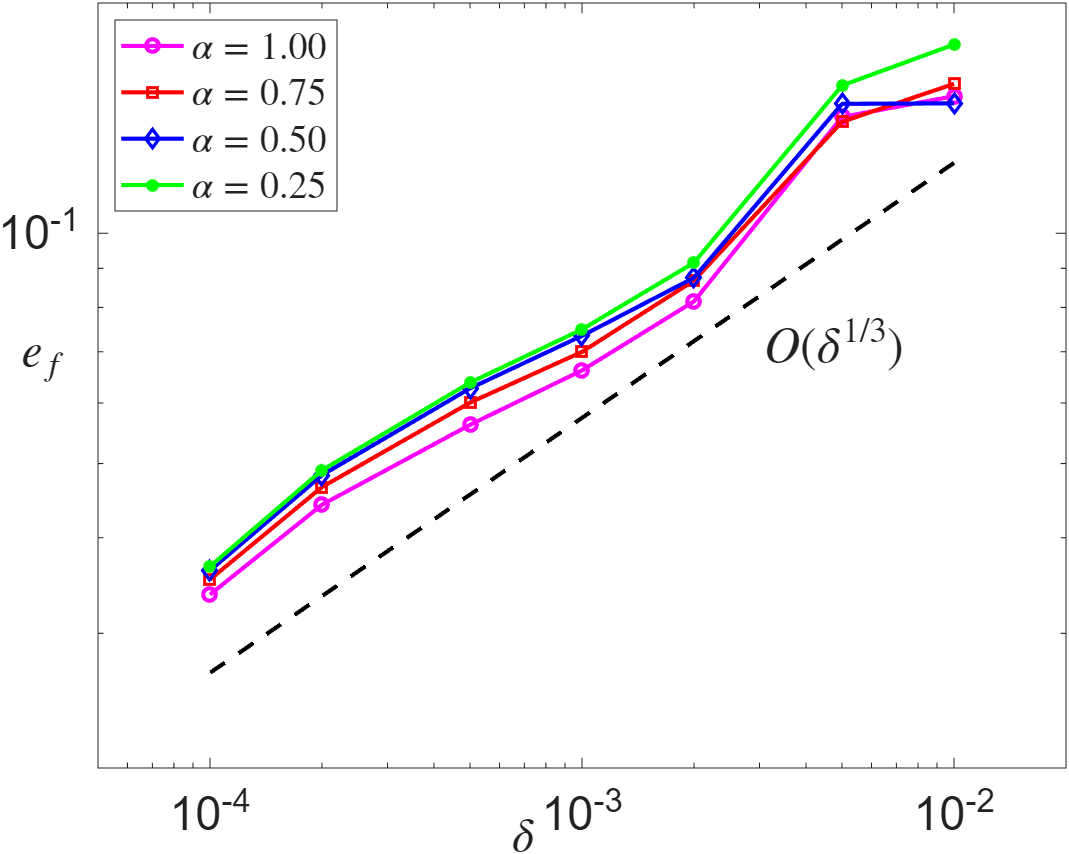}\\
		(a) $e_f$ versus $h$ & (b) $e_f$ versus $\tau$  & (c) $e_f$ versus $\delta$  
	\end{tabular}
	\caption{The convergence of $e_f$ with respect to $h$, $\tau$ and $\delta$ for Example \ref{ex:smooth1}.}
	\label{Fig:conv_smooth1}
\end{figure}
\begin{figure}[htbp]
	\centering
	\begin{tabular}{cccc}
		\includegraphics[width=0.22\textwidth]{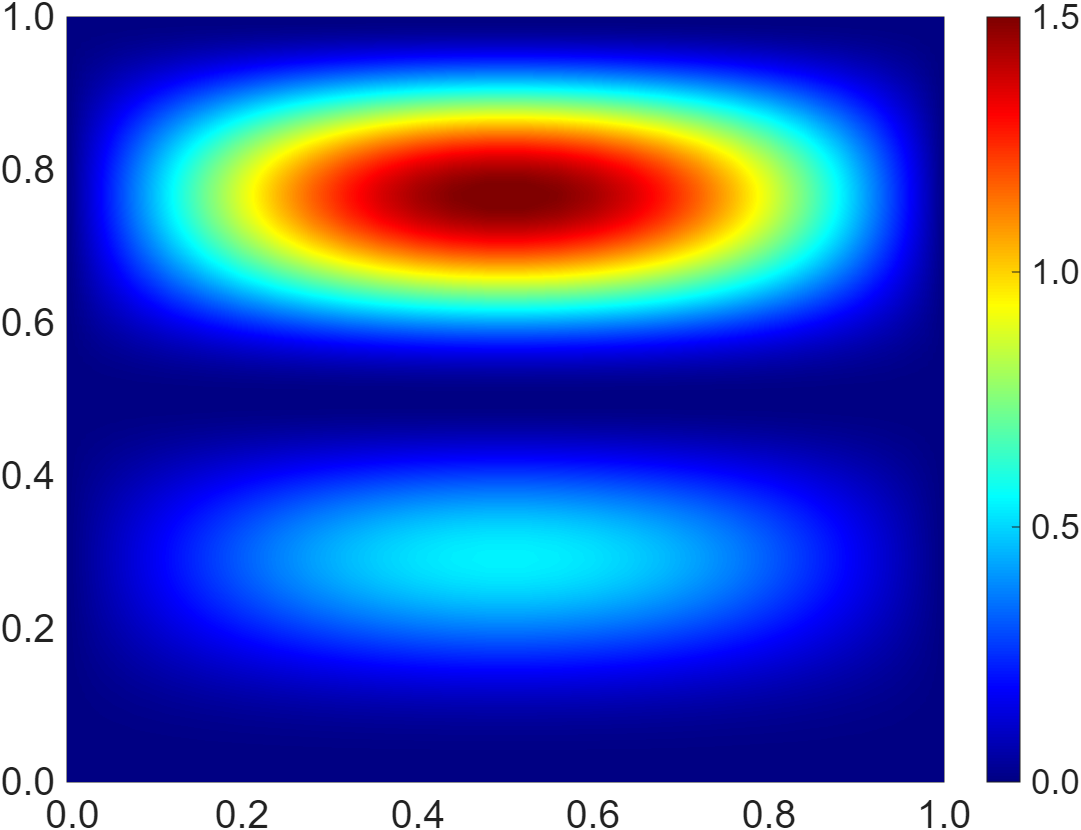}&
		\includegraphics[width=0.22\textwidth]{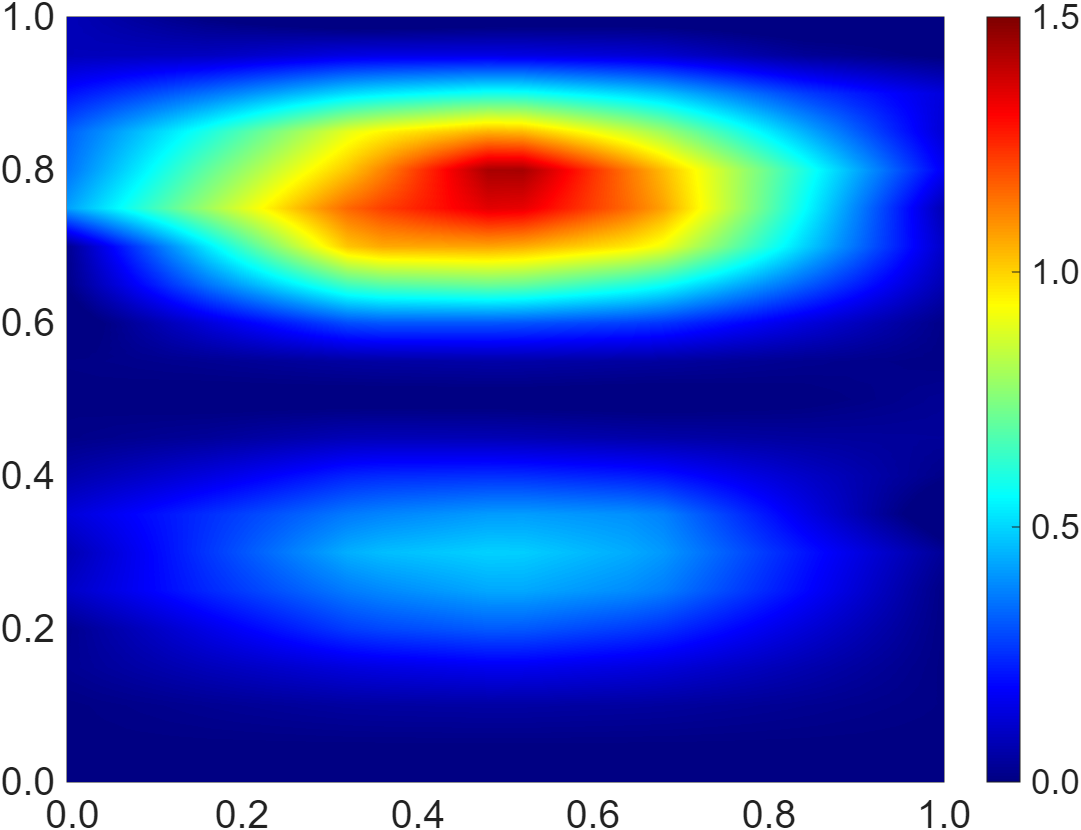}&
		\includegraphics[width=0.22\textwidth]{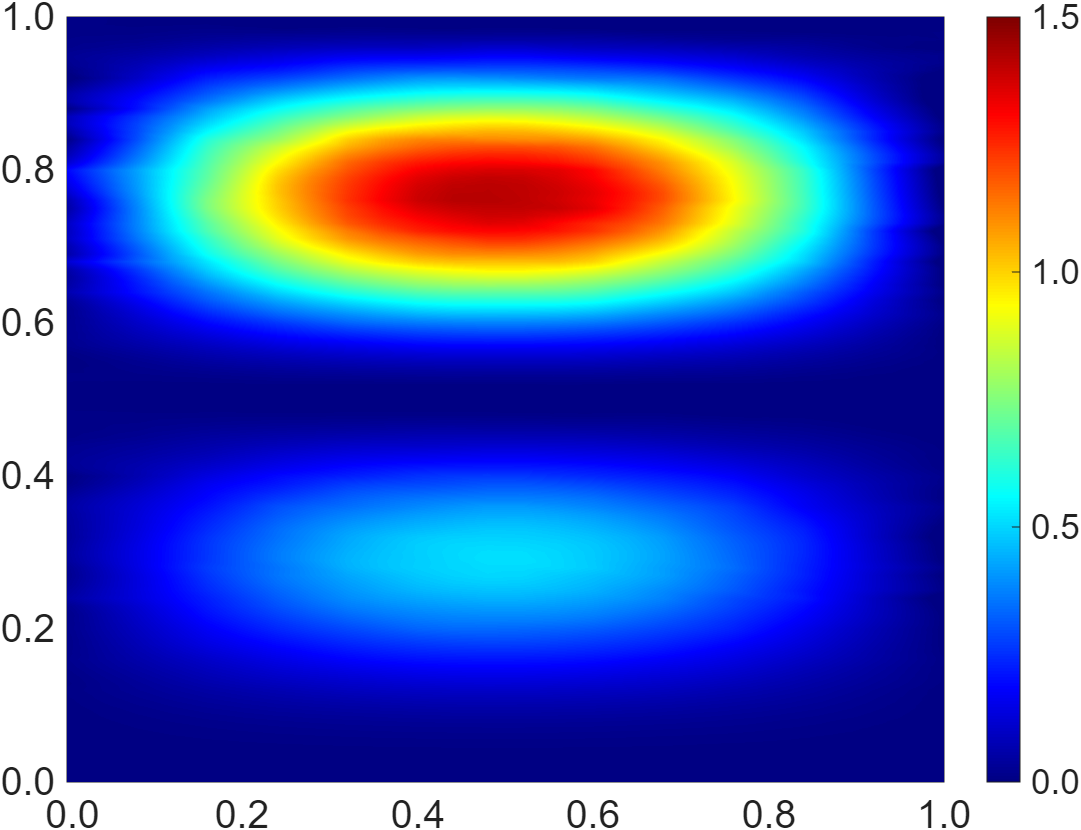}&
		\includegraphics[width=0.22\textwidth]{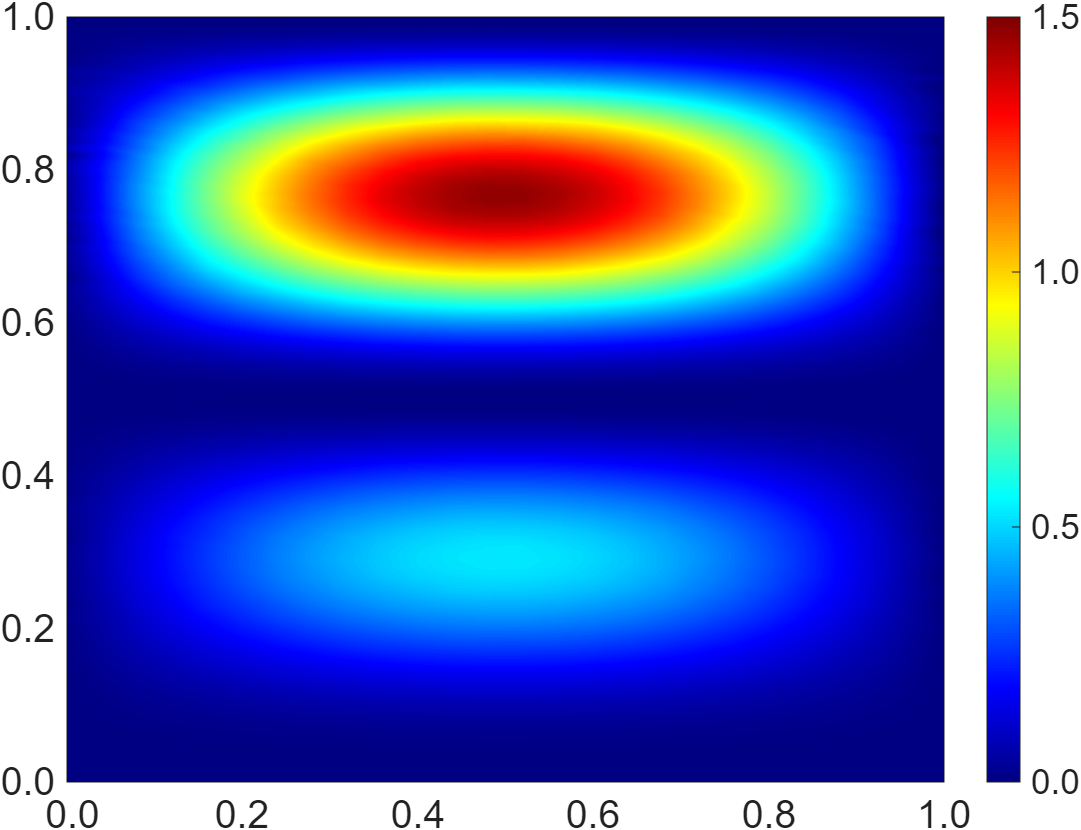}\\
		(a) exact & (b) $\delta=\text{1e-2}$  & (c) $\delta=\text{1e-3}$ & (d) $\delta=\text{1e-4}$  
	\end{tabular}
	\caption{The reconstruction $f^*$  for Example \ref{ex:smooth1}  with $\alpha=0.75$ at different noise levels.}
	\label{Fig:recon_smooth1}
\end{figure}
\begin{figure}[htbp]
	\centering
	\begin{tabular}{cccc} 
		\includegraphics[width=0.22\textwidth]{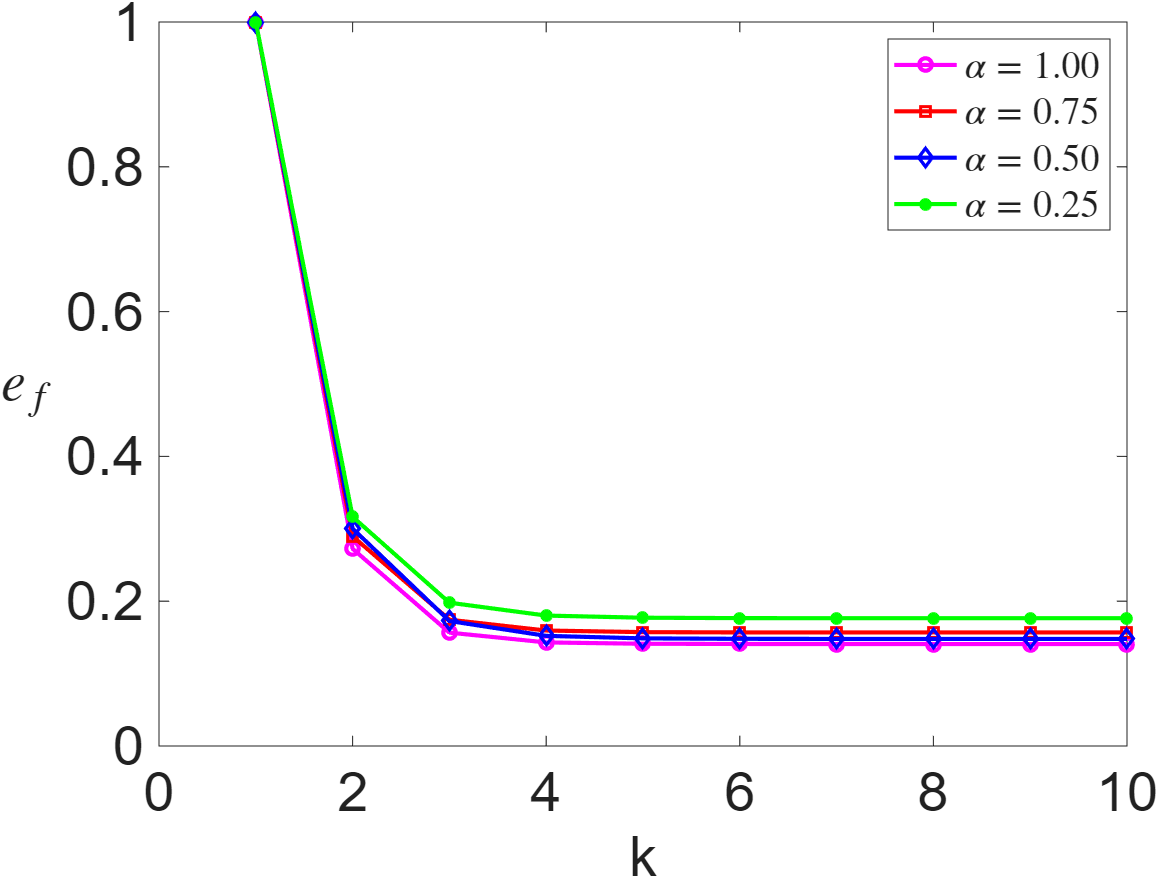}&
        \includegraphics[width=0.22\textwidth]{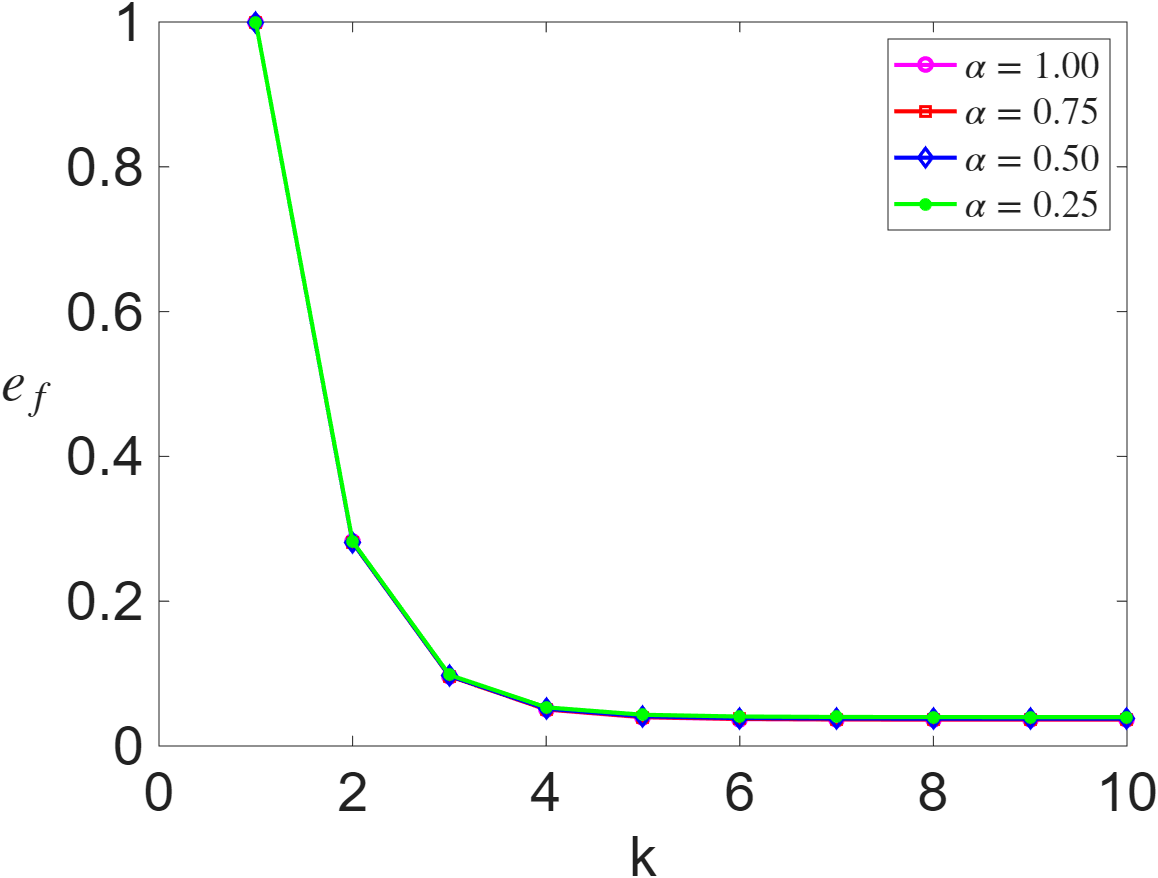}&
        \includegraphics[width=0.22\textwidth]{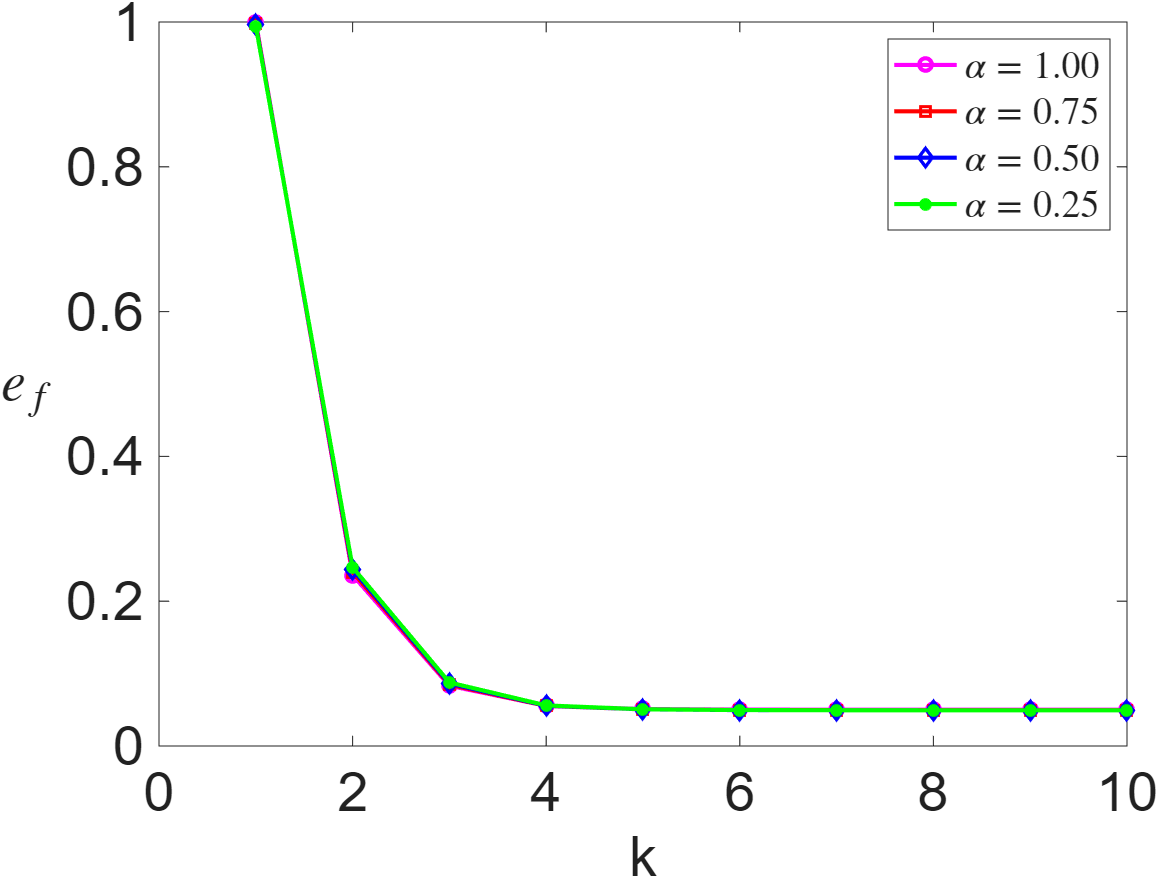}&
        \includegraphics[width=0.22\textwidth]{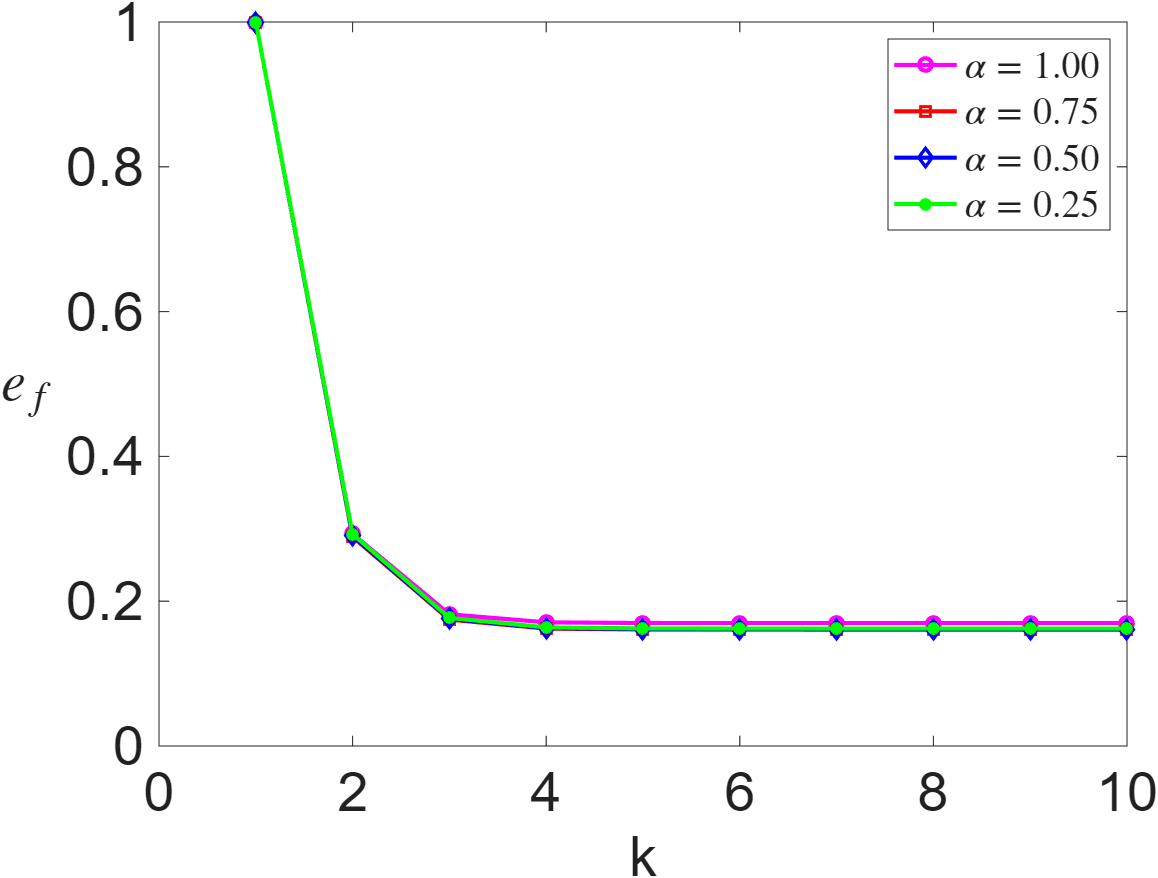}\\ 
		\includegraphics[width=0.22\textwidth]{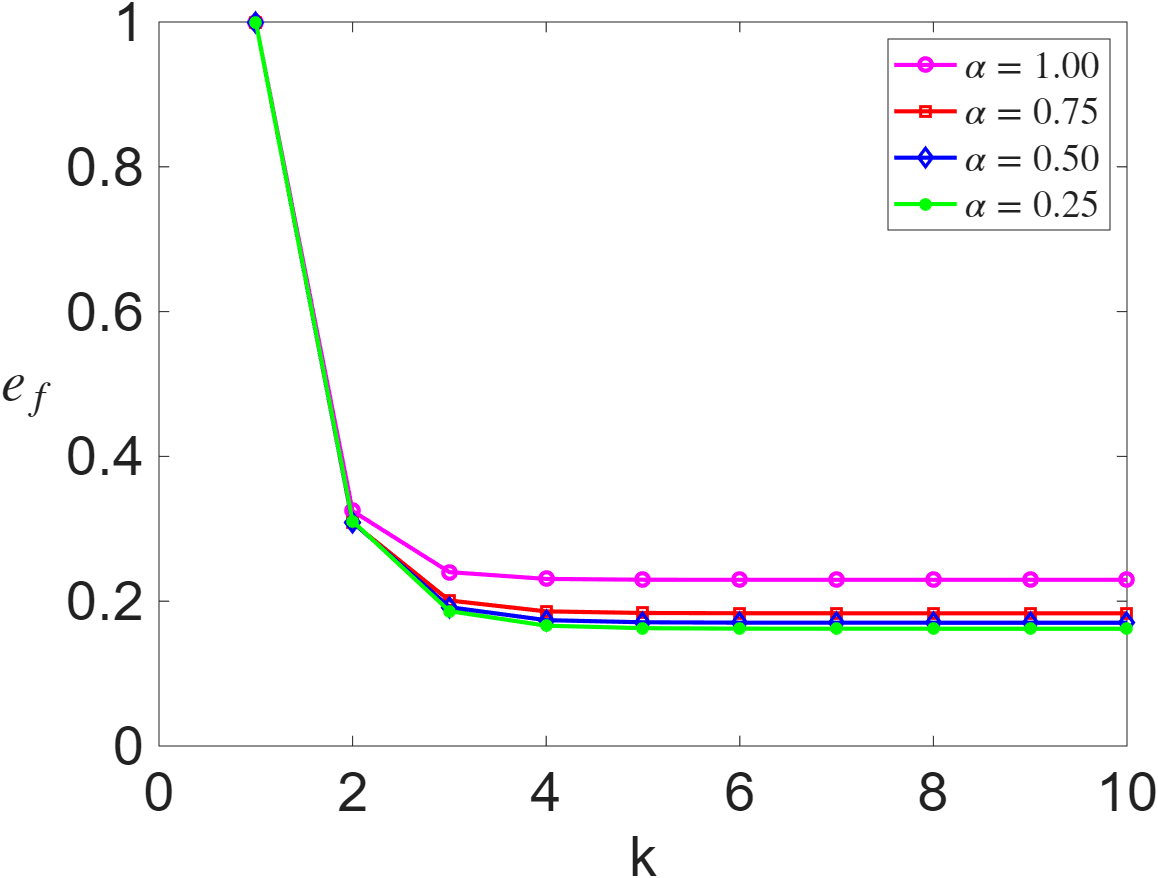}&
        \includegraphics[width=0.22\textwidth]{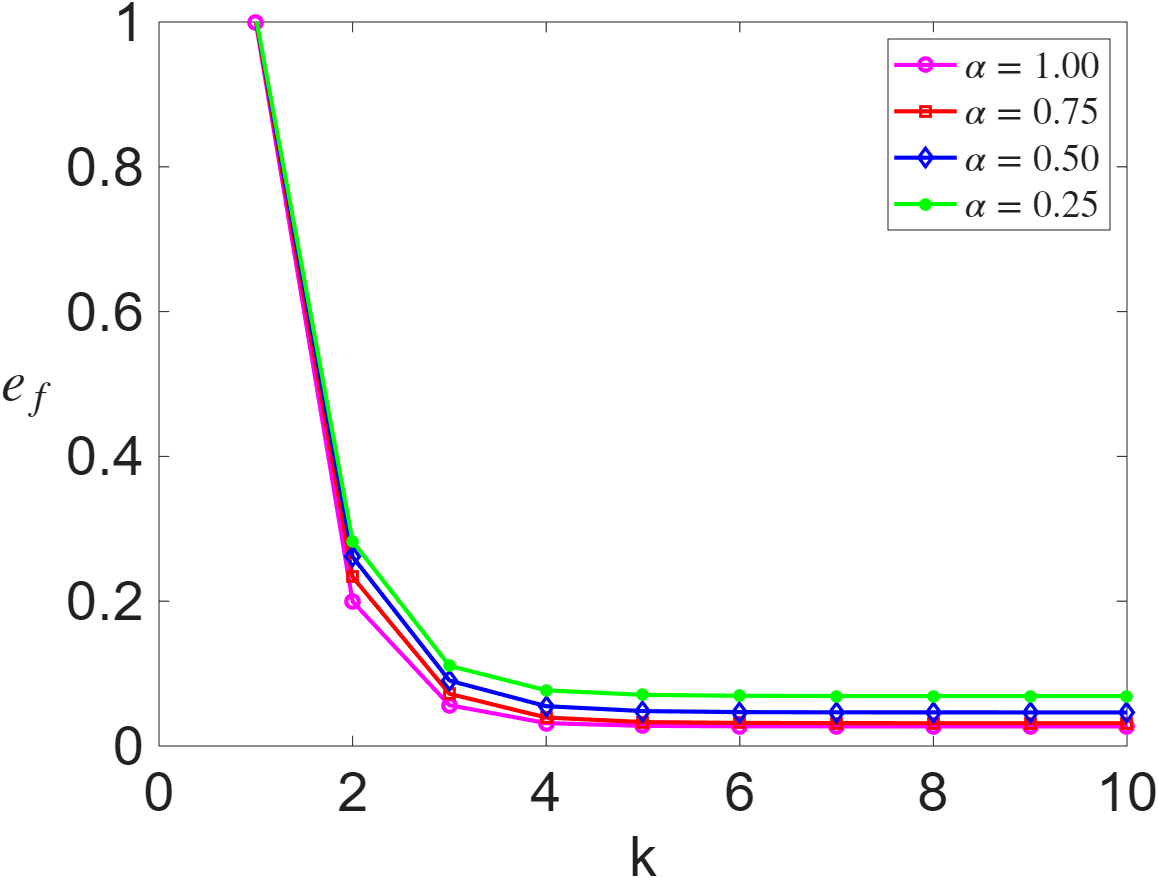}&
        \includegraphics[width=0.22\textwidth]{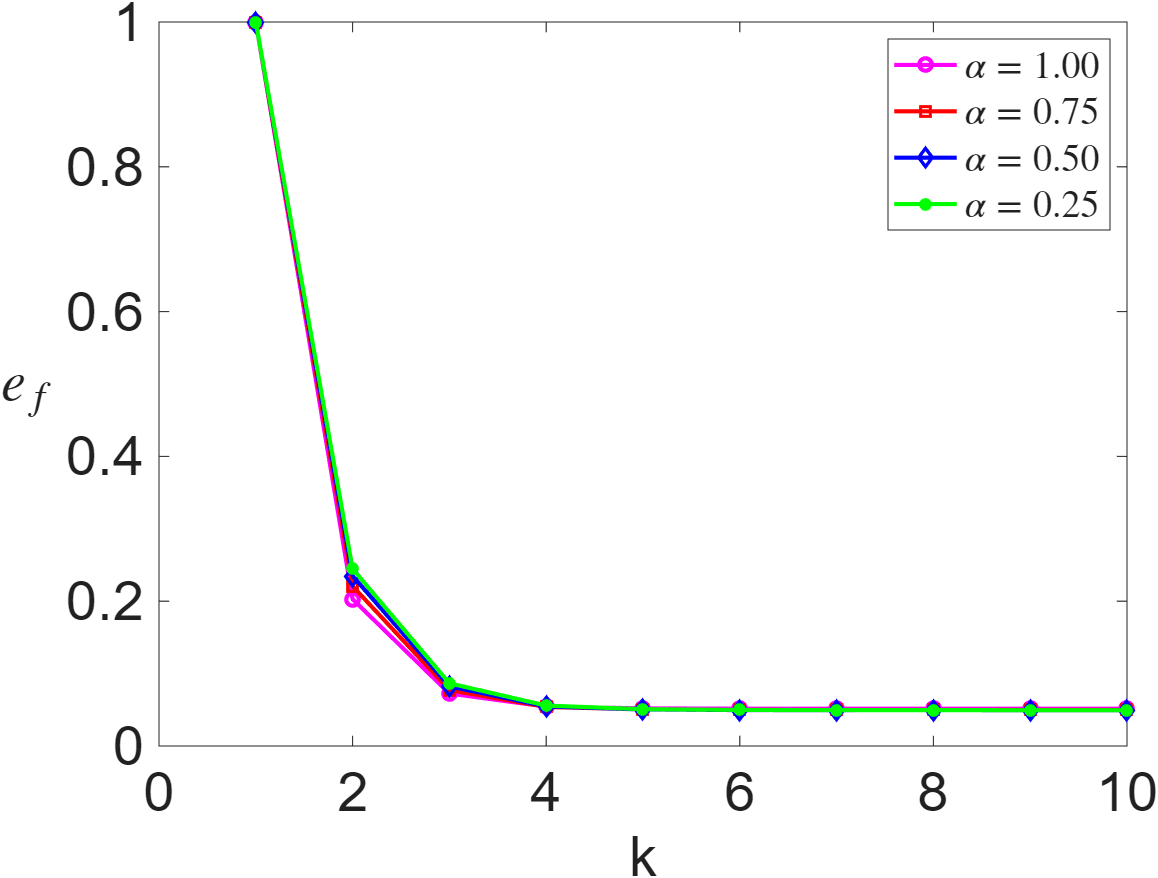}&
        \includegraphics[width=0.22\textwidth]{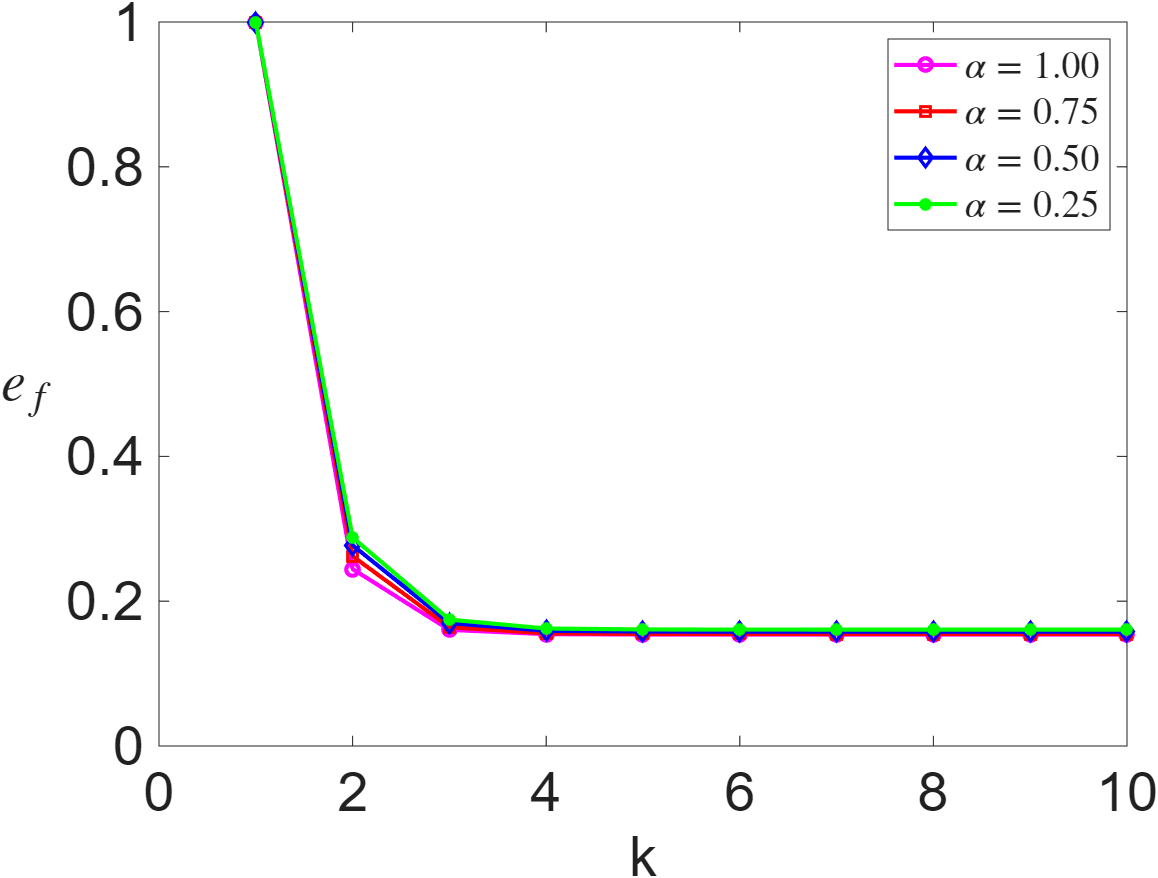}\\
		(a)  \ref{ex:smooth1} & (b)   \ref{ex:smooth2} & (c)  \ref{ex:nonsmooth1} & (d)   \ref{ex:nonsmooth2}
	\end{tabular}
	\caption{Evolution of errors with respect to iteration number $k$ in the case $\delta=\text{1e-4}$. Top: evolution of standard $\ell^2(L^2(\omega))$ error. Bottom:  evolution of weighted $\ell_\lambda^2(L^2(\omega))$ error, with $\lambda=10$.}
	\label{Fig:err_evolution}
\end{figure}

The next example involves a smooth source   not satisfying  Assumption \ref{assum:noise}.
\begin{example}\label{ex:smooth2}
     $R(x_1,x_2,t)=x_2$ and $f^\dagger(x_1,t)=\sin(\pi x_1) (2+\sin(4\pi t))$.
\end{example}

Both $f(x_1,0)$ and $\partial_t f(x_1,0)$ are non-vanishing, and Assumption \ref{assum:noise} does not hold. The state $u^\dagger=u(f^\dagger)$ has a weak singularity at $t=0$. We  verify the error bound  in Remarks \ref{rmk:error} and \ref{rmk:error_high_reg}:
\begin{equation*}
    \| ((f^*)^n-(f^\dagger)^n)_{n=1}^{N} \|_{ \ell^2(L^2(\omega))}\le  c(\tau^{-\alpha}\delta+\tau^{\frac{1}{2}}|\log\tau|+ h^{-2}\delta+ h ),
\end{equation*}
Fig. \ref{Fig:conv_smooth2} presents the convergence rates   with respect to $h$, $\tau$ and $\delta$.  First, we consistently observe an $O(h)$ convergence rate.   However, the temporal convergence varies with the order $\alpha$. For $\alpha=1$, the convergence rate is $O(\tau)$, since the solution to the heat equation is smooth at $t = 0$. For $\alpha=0.75$, the convergence rate is $O(\tau^{0.74})$. The singularity becomes stronger as $\alpha$ decreases to zero, which leads to a further decline in the convergence rate. The convergence rate is around $O(\tau^{0.5})$ for $\alpha=0.5$ and $\alpha=0.25$, which agrees with the theoretical  prediction $O(\tau^{\frac{1}{2}}|\log\tau|)$ from Remark \ref{rmk:error}. 
To study the convergence rate with respect to $\delta$, we choose $h$ and $\tau$ as $h^{-2}\delta\sim h$ and $\tau^{-\alpha}\delta\sim\tau^{\frac{1}{2}}$, with the initial parameters $h=1/6$ and $\tau=1/20$ for $\delta=\text{1e-2}$. Fig. \ref{Fig:conv_smooth2} (c) indicates a  convergence rate $O(\delta^{0.32})$, which is consistent with the predicted rate $O(\delta^{\frac{1}{3}})$.
The reconstructions for $\alpha=0.75$ and $\delta\in \{\text{1e-2, 1e-3, 1e-4}\}$ are presented in Fig. \ref{Fig:recon_smooth2}. The reconstructions are reliable with minor deterioration in the case $\delta=\text{1e-2}$. 
Fig. \ref{Fig:err_evolution} (b) shows the error evolutions with respect to the iteration number $k$ in the case $\delta=\text{1e-4}$, starting from the initial guess  $f^0=0$. The error decay behaves similarly to Example \ref{ex:smooth1}.

\begin{figure}[htbp]
	\centering
	\begin{tabular}{ccc}
		\includegraphics[width=0.30\textwidth]{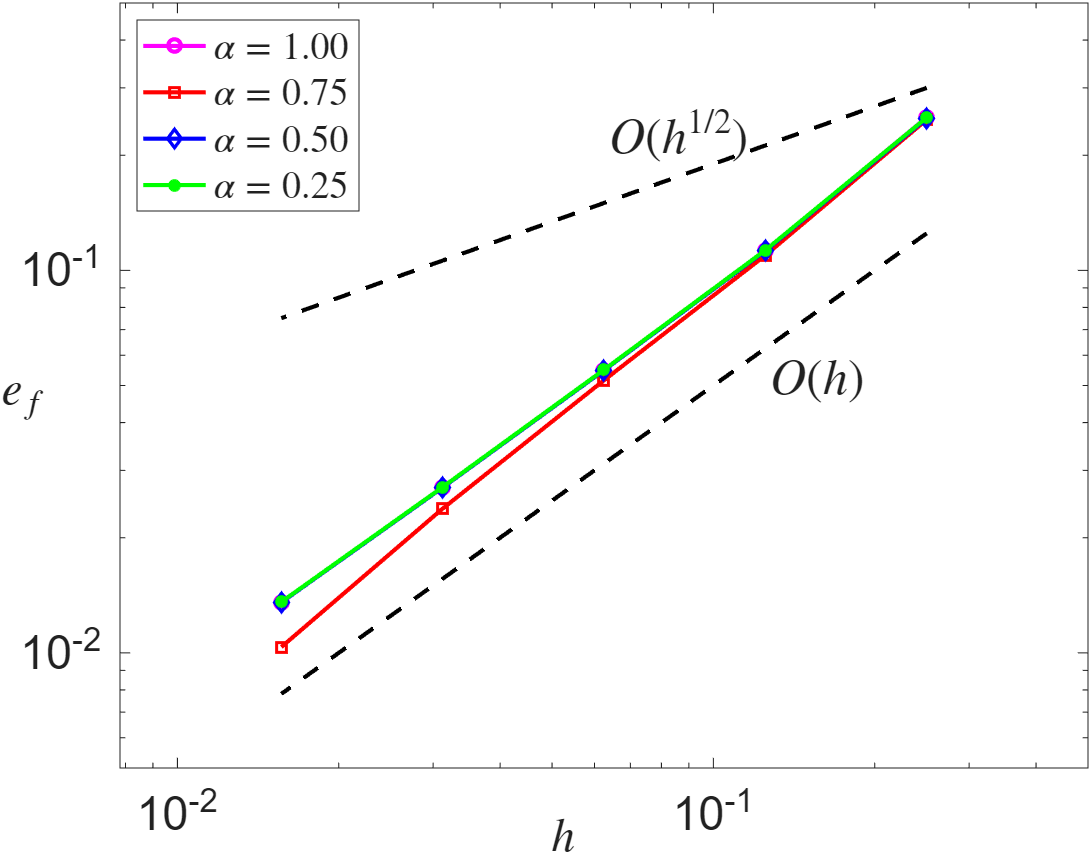}&
		\includegraphics[width=0.30\textwidth]{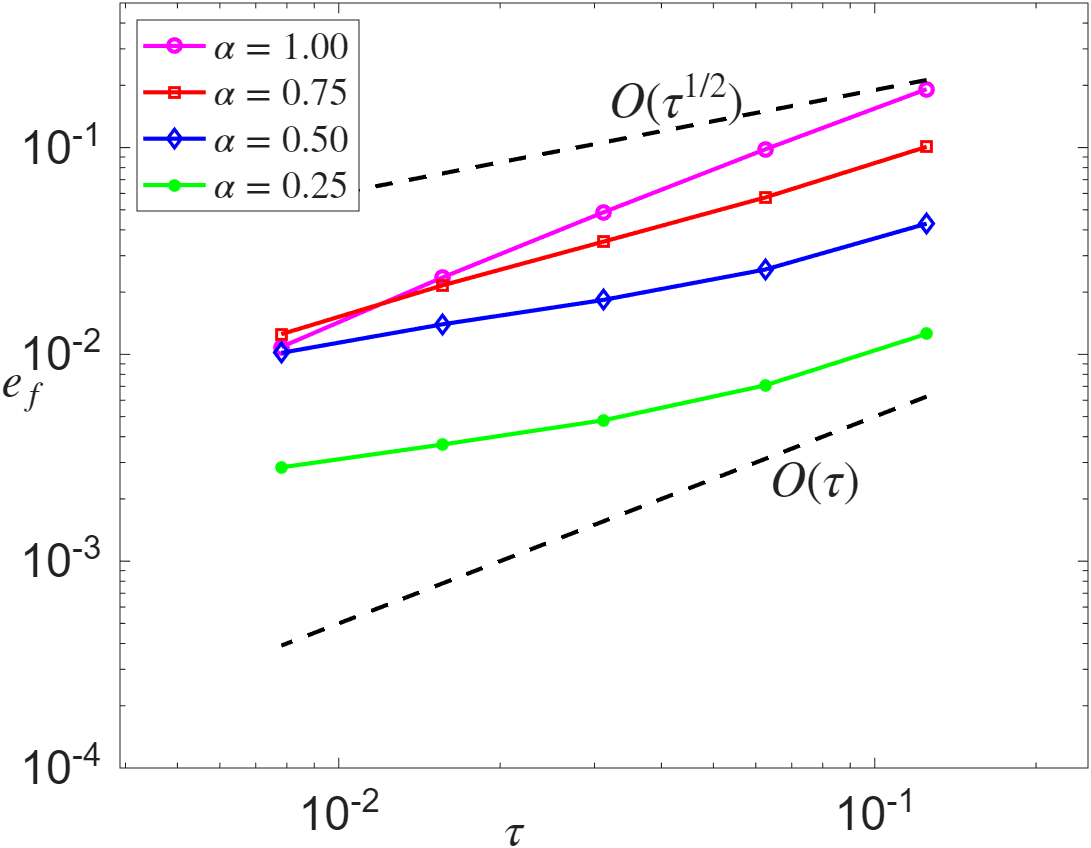}&
		\includegraphics[width=0.30\textwidth]{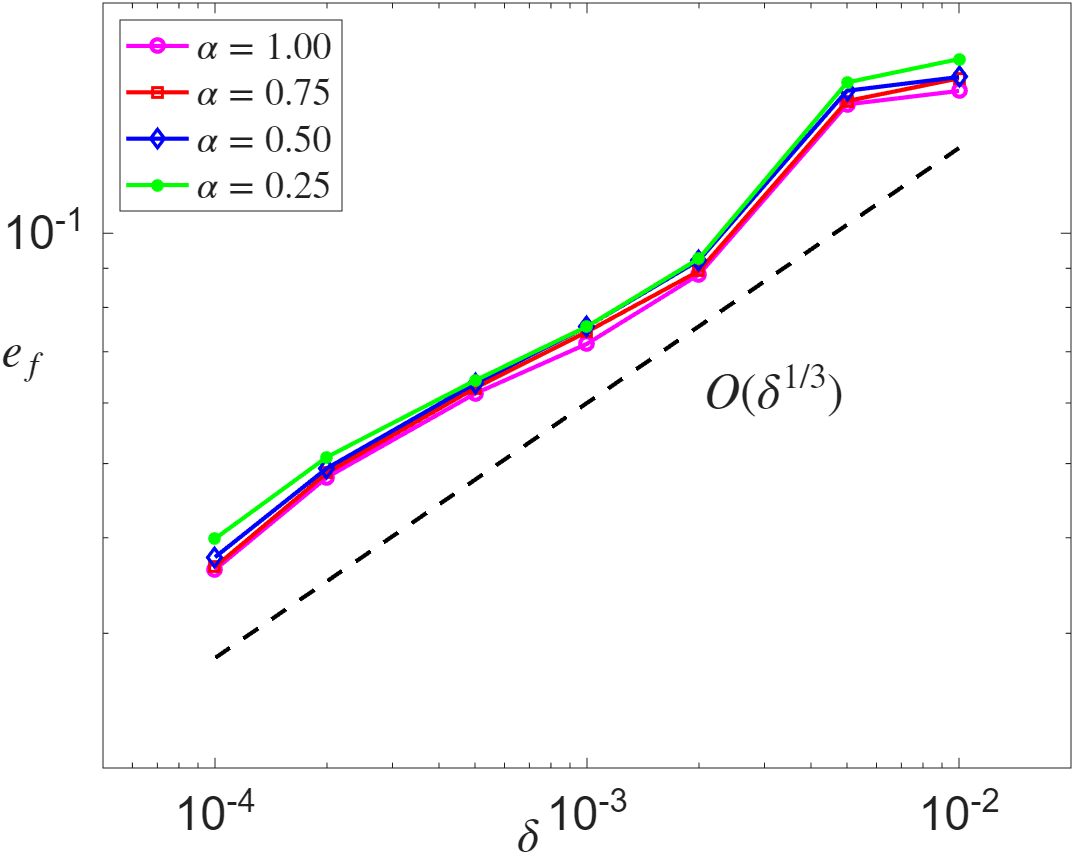}\\
		(a) $e_f$ versus $h$ & (b) $e_f$ versus $\tau$  & (c) $e_f$ versus $\delta$  
	\end{tabular}
	\caption{The convergence of $e_f$ with respect to $h$, $\tau$ and $\delta$ for Example  \ref{ex:smooth2}.}
	\label{Fig:conv_smooth2}
\end{figure}
\begin{figure}[htbp]
	\centering
	\begin{tabular}{cccc}
		\includegraphics[width=0.22\textwidth]{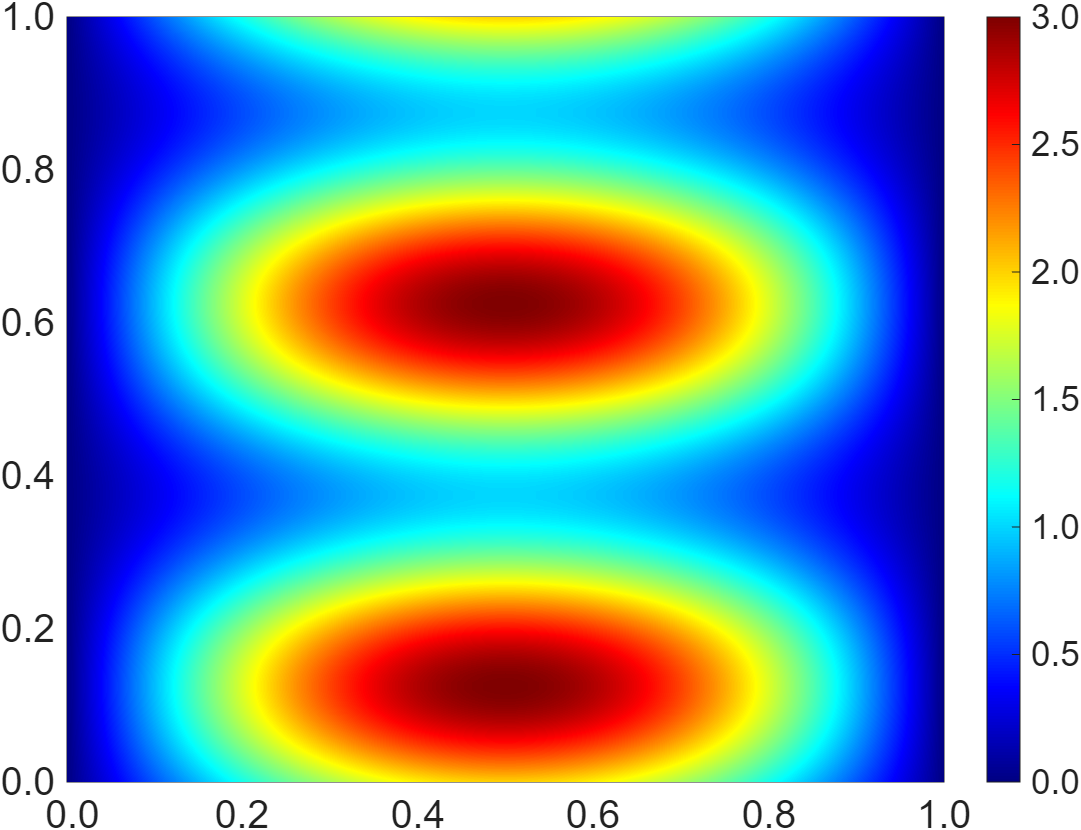}&
		\includegraphics[width=0.22\textwidth]{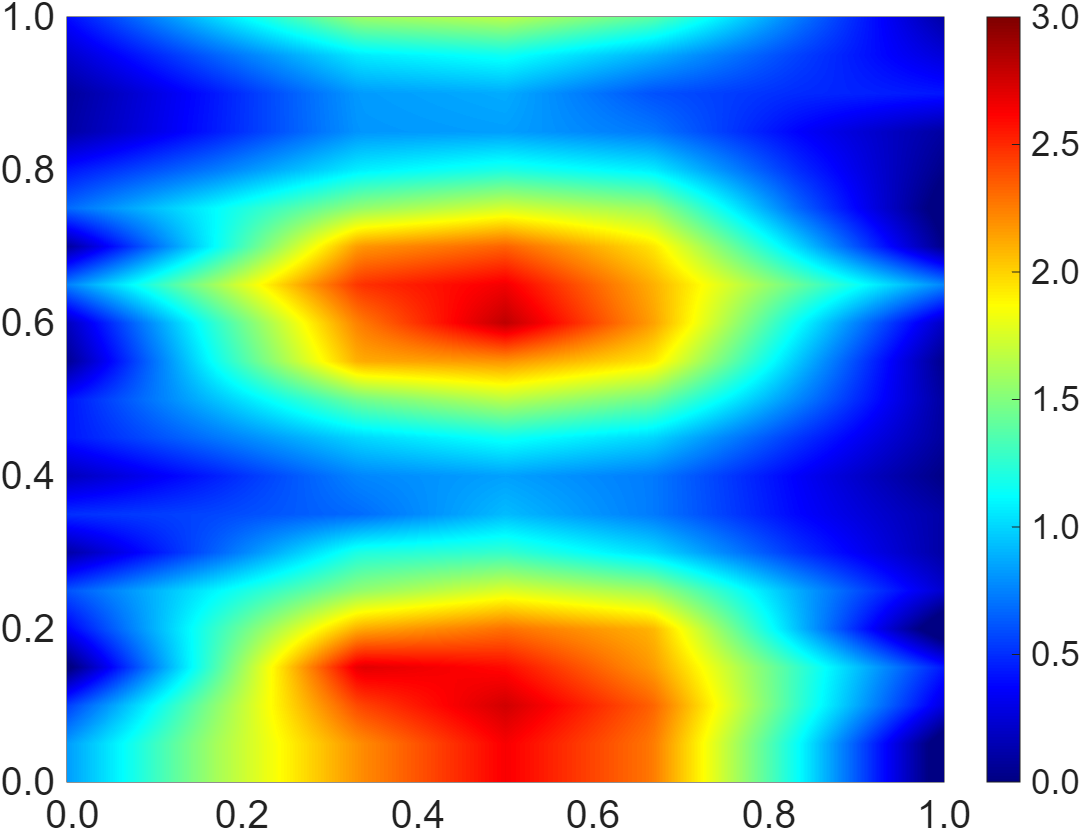}&
		\includegraphics[width=0.22\textwidth]{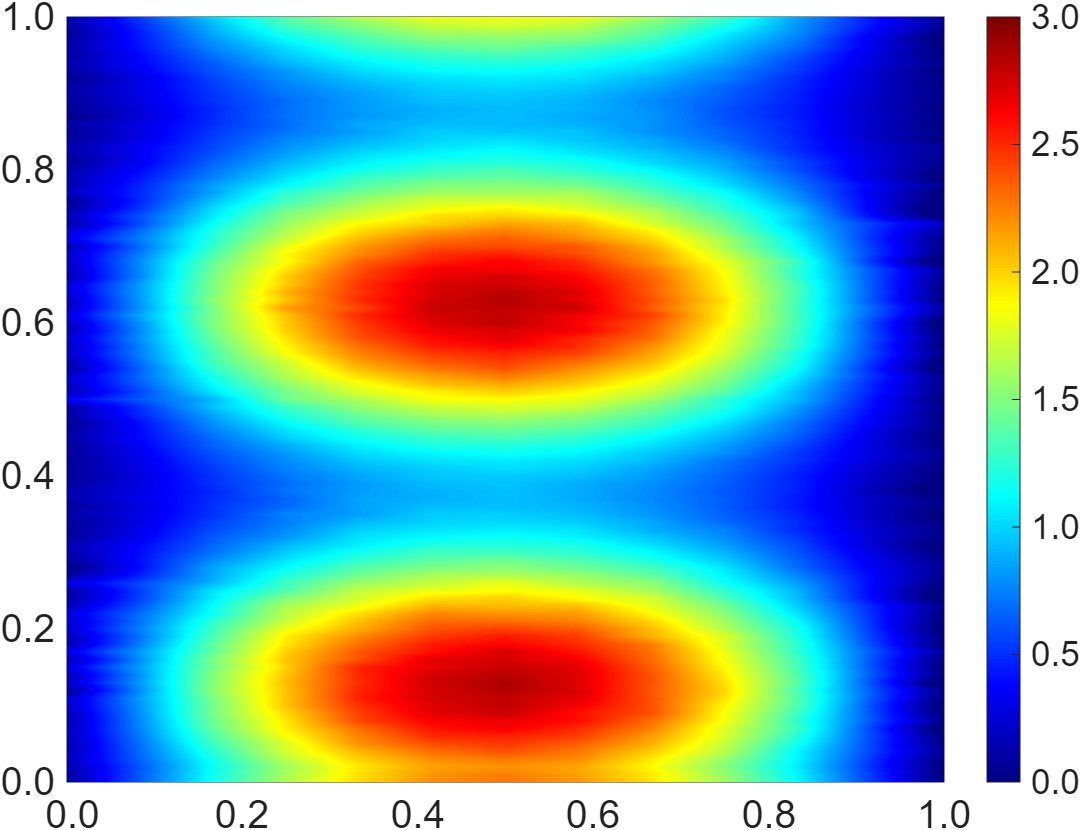}&
		\includegraphics[width=0.22\textwidth]{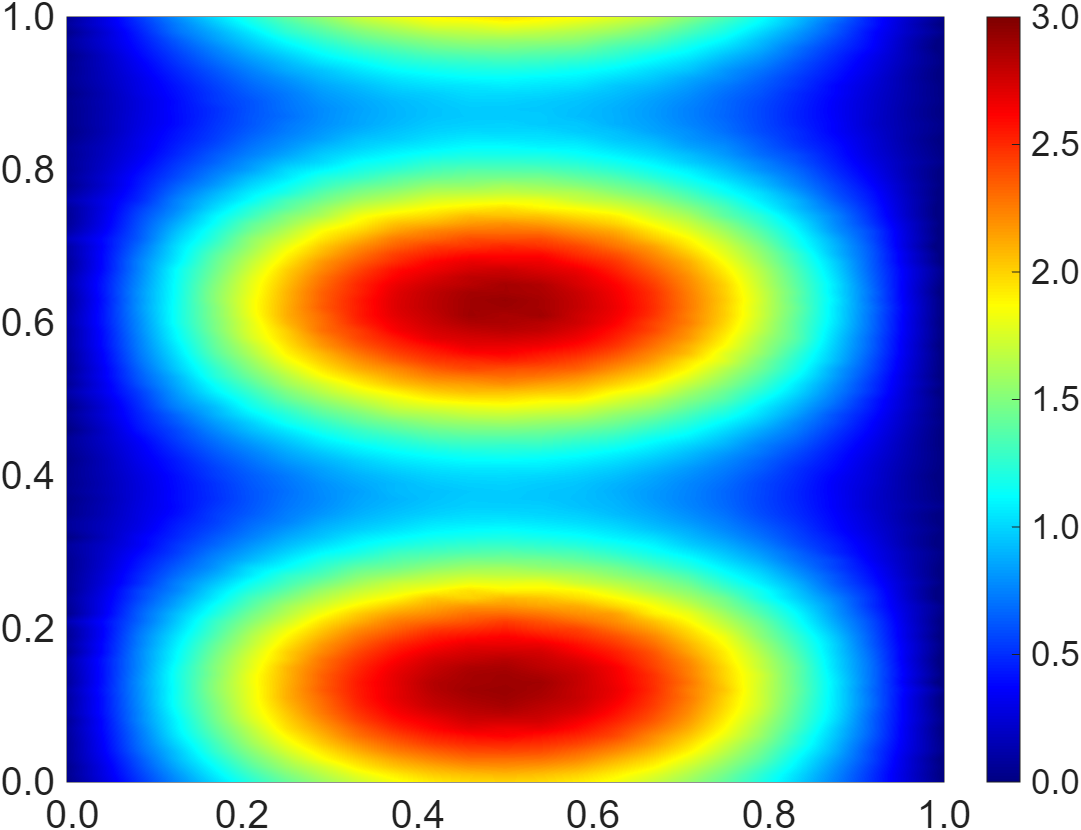}\\
		(a) exact & (b) $\delta=\text{1e-2}$  & (c) $\delta=\text{1e-3}$ & (d) $\delta=\text{1e-4}$  
	\end{tabular}
	\caption{The reconstruction $f^*$  for Example \ref{ex:smooth2}  with $\alpha=0.75$ at different noise levels.}
	\label{Fig:recon_smooth2}
\end{figure}

Next, we investigate nonsmooth sources to verify the sharpness of the error estimate in Theorem \ref{thm:error}.  $\chi_S$ denotes the characteristic function
of a set $S$.
\begin{example}\label{ex:nonsmooth1}
    $R(x_1,x_2,t)=x_2$ and $f^\dagger(x_1,t)= (1+\chi_{[0.5,1]}(x_1))(e^t-1)t$.
\end{example}

The source $f^\dag$ is smooth in time and piecewise constant in space and belongs to $H^{\frac{1}{2}-\epsilon}(\omega)$, for any $\epsilon>0$. Theorem \ref{thm:error} and Remark \ref{rmk:error_high_reg}  imply the error bound
\begin{equation*}
    \| ((f^*)^n-(f^\dagger)^n)_{n=1}^{N} \|_{ \ell^2(L^2(\omega))}\le  c(\tau^{-\alpha}\delta+\tau+ h^{-2}\delta+  h^{\frac{1}{2}-\epsilon}).
\end{equation*}
When $\delta=0$, Fig.~\ref{Fig:conv_nonsmooth1} shows a convergence rate $O(\tau)$ in time, and $O(h^{0.64})$ in space. The latter is close to the theoretical prediction $O(h^{\frac12})$, and indicates the  sharpness of  the error estimate.
To examine the convergence with respect to $\delta$, we take the parameters $h=1/6$ and $\tau=1/20$ for $\delta=10^{-2}$ and then choose $h$ and $\tau$ according to Theorem \ref{thm:error}, namely $h^{-2}\delta \sim h^{\frac12}$ and $\tau^{-\alpha}\delta \sim \tau$.  The empirical rate is about  $O(\delta^{0.24})$, which is very close to the theoretical rate $O(\delta^{\frac15})$.

Fig. \ref{Fig:recon_nonsmooth1} illustrates the numerical reconstructions with $\alpha=0.75$ and $\delta\in \{\text{1e-2, 1e-3, 1e-4}\}$. The recovered source $f^*$ fails to capture the discontinuity of the exact source $f^\dagger$. Moreover, refining the mesh does not necessarily improve the reconstruction quality, since the algorithm involves numerically differentiating the noisy data $z^\delta$. The error bound $h^{-2}\delta+h^{\min(q,\frac{1}{2})}$ indicates that a finer mesh may produce similar or even worse reconstructions.
The error evolutions in the case $\delta=\text{1e-4}$ with respect to the iteration number $k$ are shown in Fig. \ref{Fig:err_evolution} (c), with $f^0=0$.  

\begin{figure}[htbp]
	\centering
	\begin{tabular}{ccc}
		\includegraphics[width=0.30\textwidth]{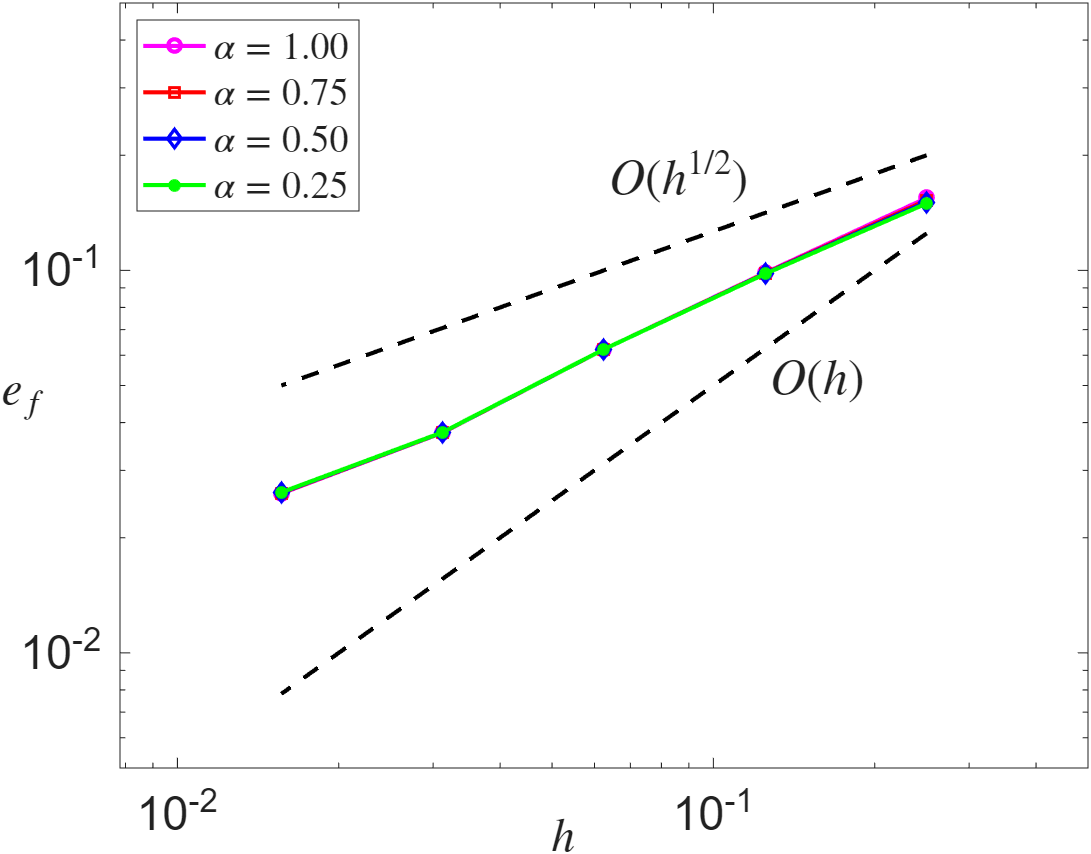}&
		\includegraphics[width=0.30\textwidth]{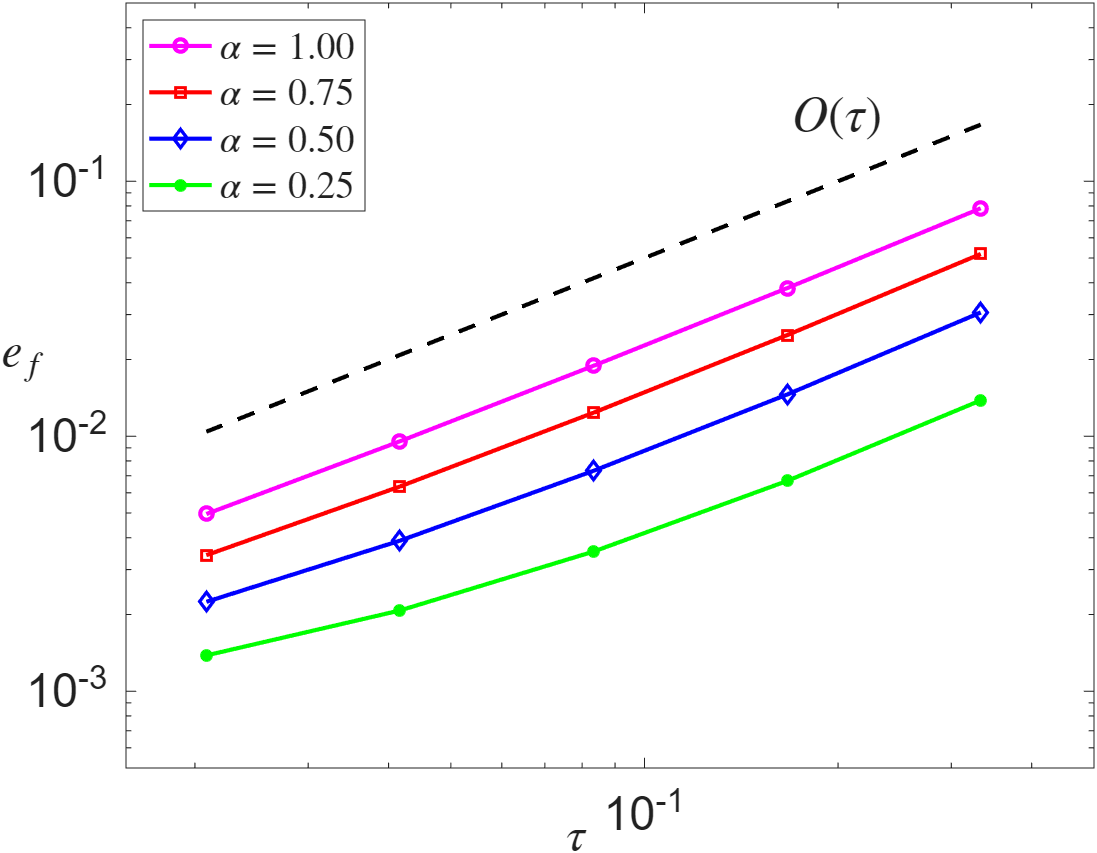}&
		\includegraphics[width=0.30\textwidth]{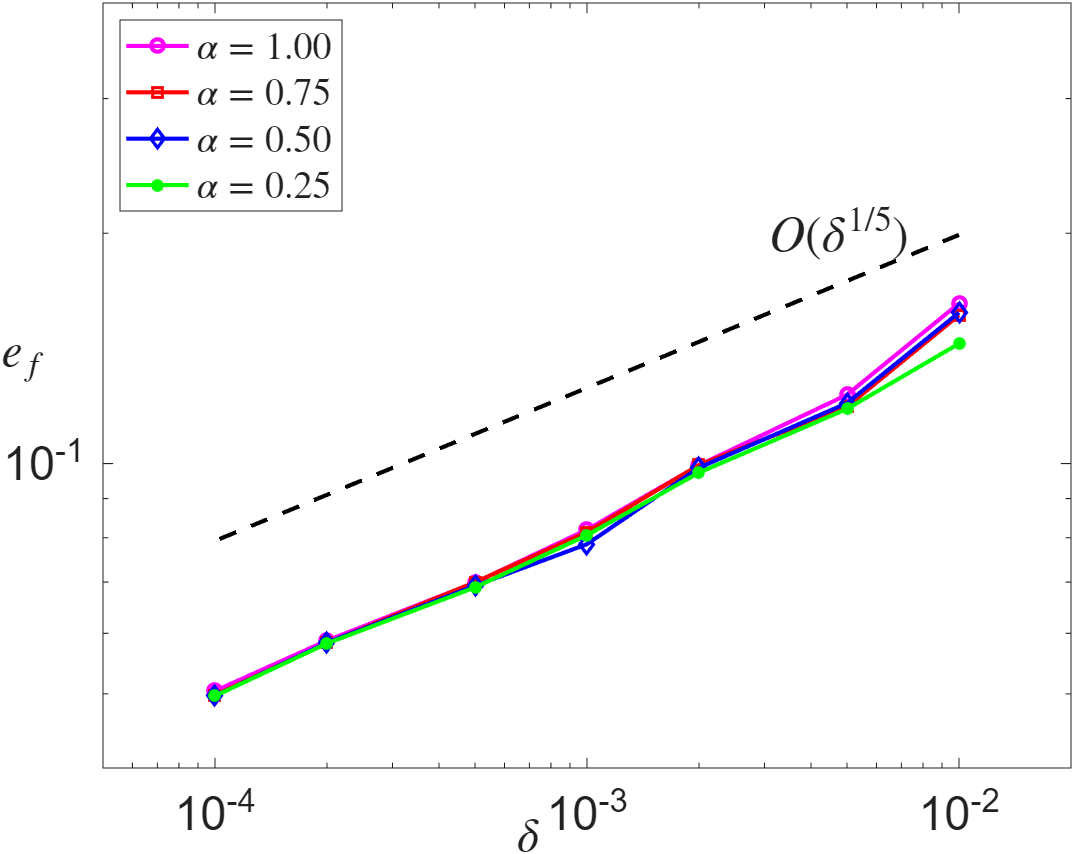}\\
		(a)  $e_f$ versus $h$ & (b)  $e_f$ versus $\tau$  & (c)  $e_f$ versus $\delta$  
	\end{tabular}
	\caption{The convergence of $e_f$ with respect to $h$, $\tau$ and $\delta$ for Example \ref{ex:nonsmooth1}.}
	\label{Fig:conv_nonsmooth1}
\end{figure}
\begin{figure}[htbp]
	\centering
	\begin{tabular}{cccc}
		\includegraphics[width=0.22\textwidth]{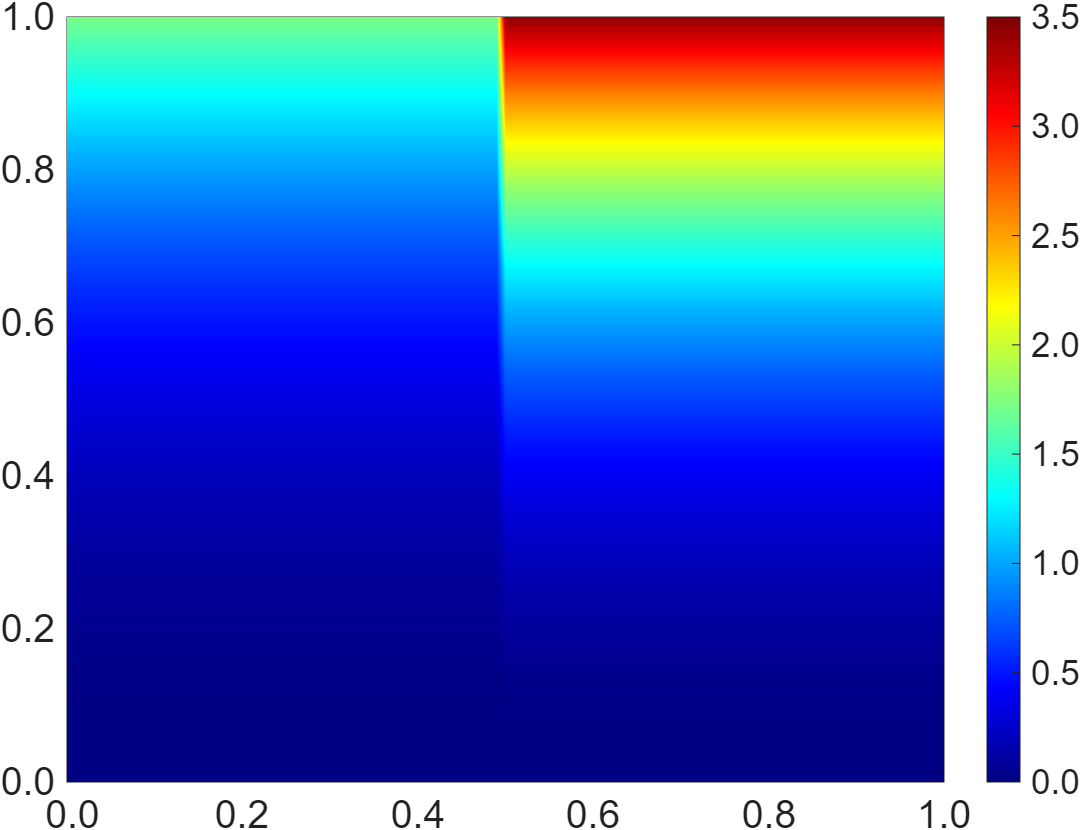}&
		\includegraphics[width=0.22\textwidth]{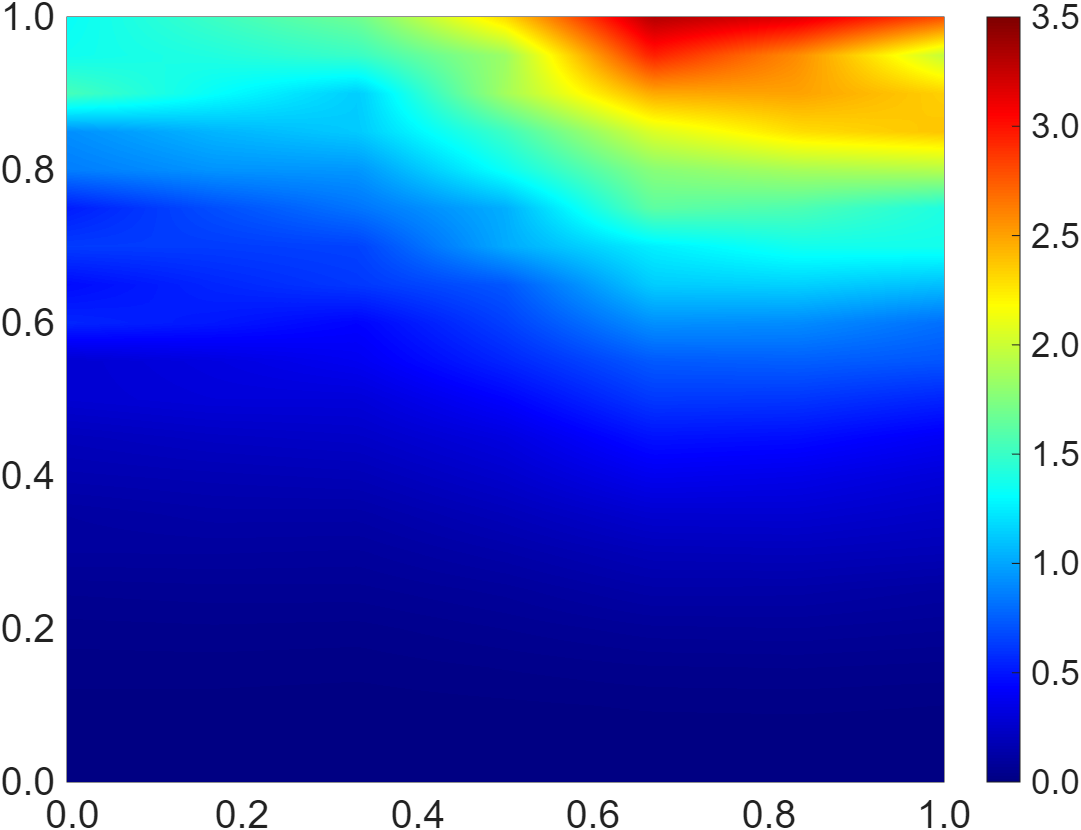}&
		\includegraphics[width=0.22\textwidth]{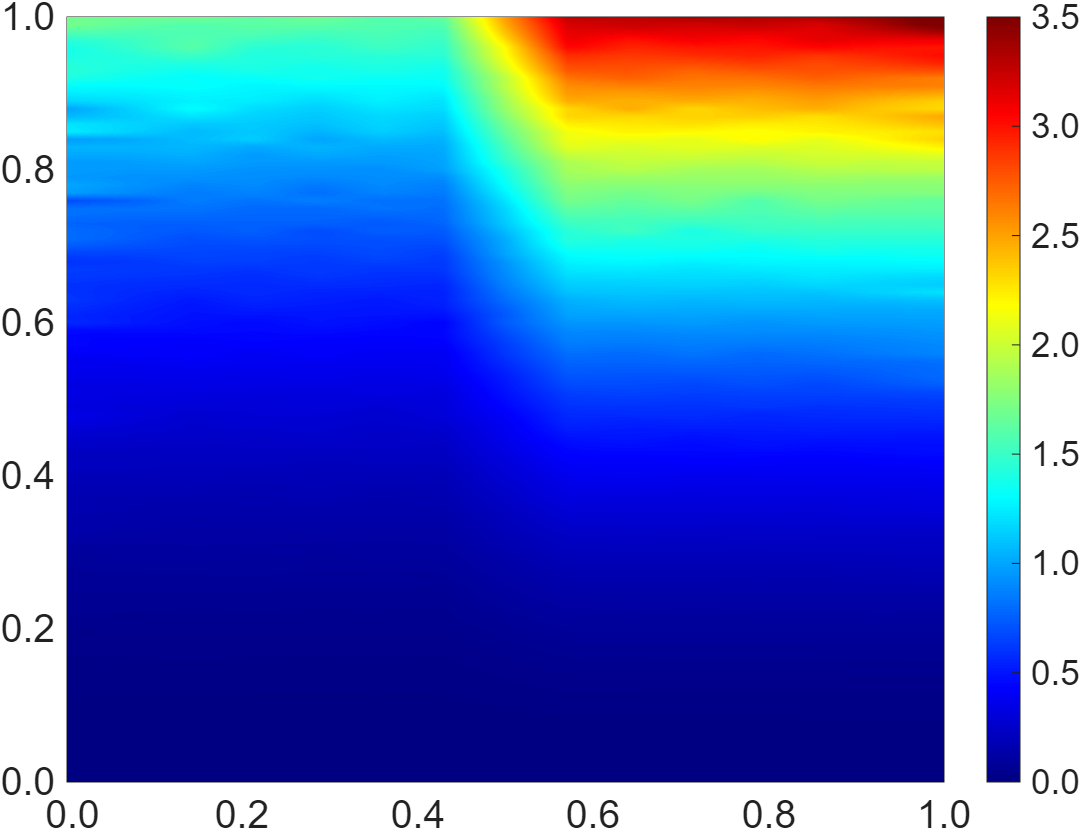}&
		\includegraphics[width=0.22\textwidth]{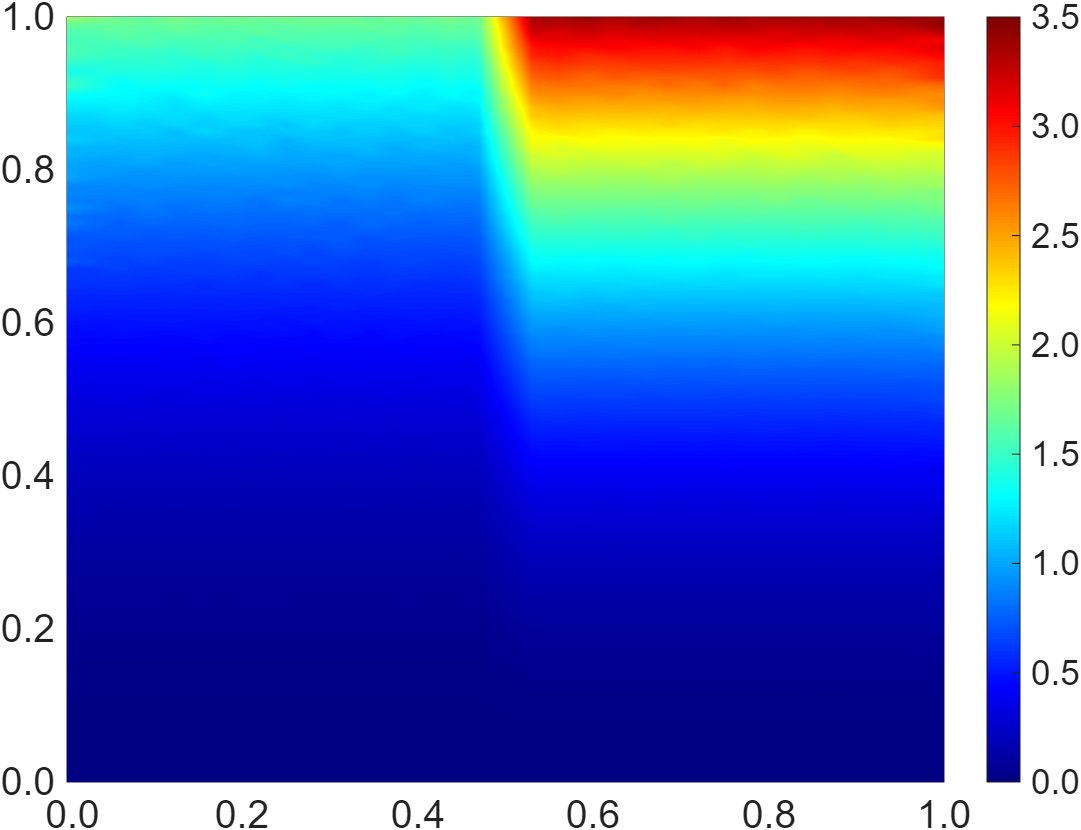}\\
		(a) exact & (b) $\delta=\text{1e-2}$  & (c) $\delta=\text{1e-3}$ & (d) $\delta=\text{1e-4}$  
	\end{tabular}
	\caption{The reconstruction $f^*$  for Example \ref{ex:nonsmooth1}  with $\alpha=0.75$ at different noise levels.}
	\label{Fig:recon_nonsmooth1}
\end{figure}

Last, we consider a more singular source.
\begin{example}\label{ex:nonsmooth2}
      $R(x_1,x_2,t)=x_2$ and $f^\dagger(x_1,t)= (|x_1-0.5|+\text{1e-4})^{-0.4}(1-\cos(4\pi t))t$.
\end{example}

Note that the function $ (|x_1-0.5| )^{-0.4}$ only belongs to $H^{0.1-\epsilon}(\omega)$, with $\epsilon>0$, and the constant $\text{1e-4}$ is to regularize the spatial singularity at $x_1 = 0.5$.  Fig. \ref{Fig:conv_nonsmooth2} shows an $O(\tau)$ and $O(h^{0.17})$ rate in time and space, respectively. The spatial convergence aligns closely with the theoretical prediction $O(h^{0.1})$, indicating the sharpness of the error estimate. To test the convergence with respect to $\delta$, we initialize $h=1/6$ and $\tau=1/20$ for $\delta=\text{1e-2}$, and choose $h$ and $\tau$ following Theorem \ref{thm:error}, i.e.,  $h^{-2}\delta\sim h^{0.1}$ and $ \tau^{-\alpha}\delta\sim \tau$. Fig. \ref{Fig:conv_nonsmooth2} (c) shows an empirical rate  $O(\delta^{0.08})$, slightly higher than the theoretical rate $O(\delta^{1/21})$.  

Fig. \ref{Fig:recon_nonsmooth2} presents the  reconstructions for $\alpha=0.75$  and $\delta\in \{\text{1e-2, 1e-3, 1e-4}\}$. The exact source $f^\dagger$ consists of a peak centered at $x=0.5$ with a magnitude of $60$. The reconstructed source $f^*$ successfully captures the qualitative shape of $f^\dagger$, but it fails to recover the peak intensity, with a maximum magnitude  around $20$. The resolution is expected to improve with adaptive mesh refinement techniques. 
The error evolutions in the case $\delta=\text{1e-4}$ with the initial guess $f^0=0$ are presented in Fig. \ref{Fig:err_evolution} (d).  

\begin{figure}[htbp]
	\centering
	\begin{tabular}{ccc}
		\includegraphics[width=0.30\textwidth]{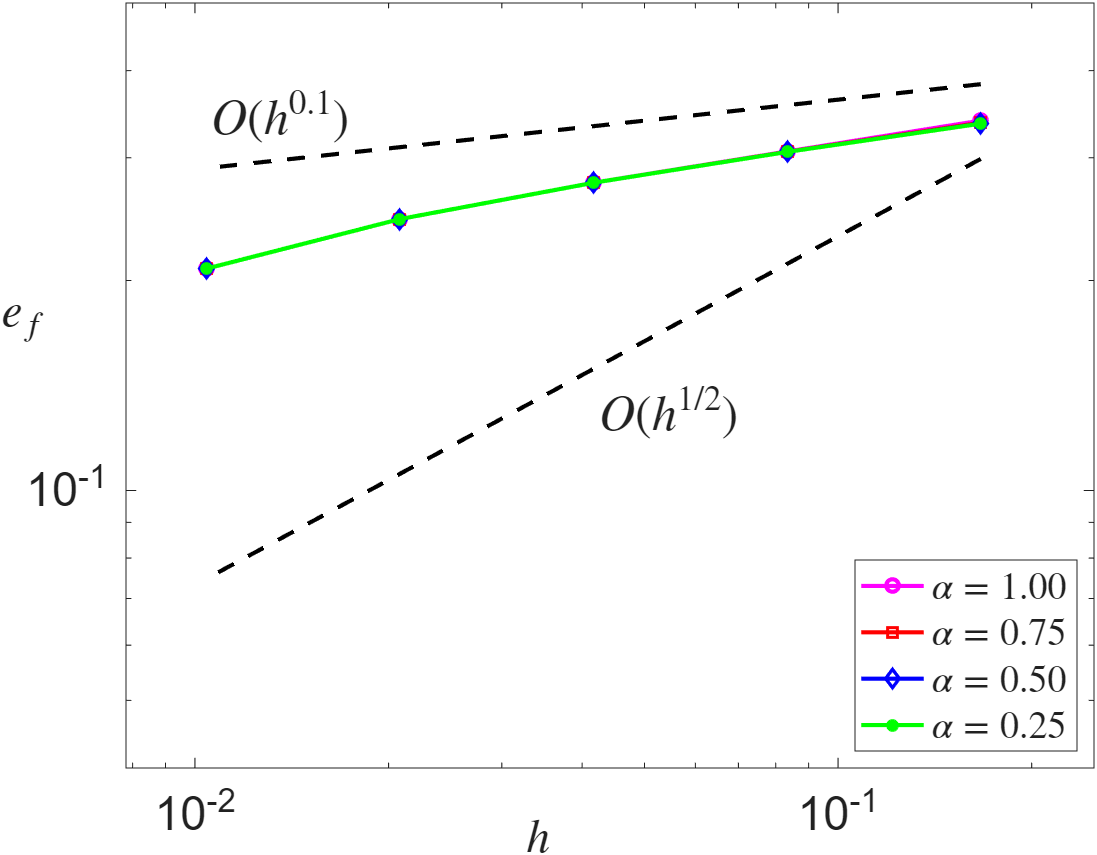}&
		\includegraphics[width=0.30\textwidth]{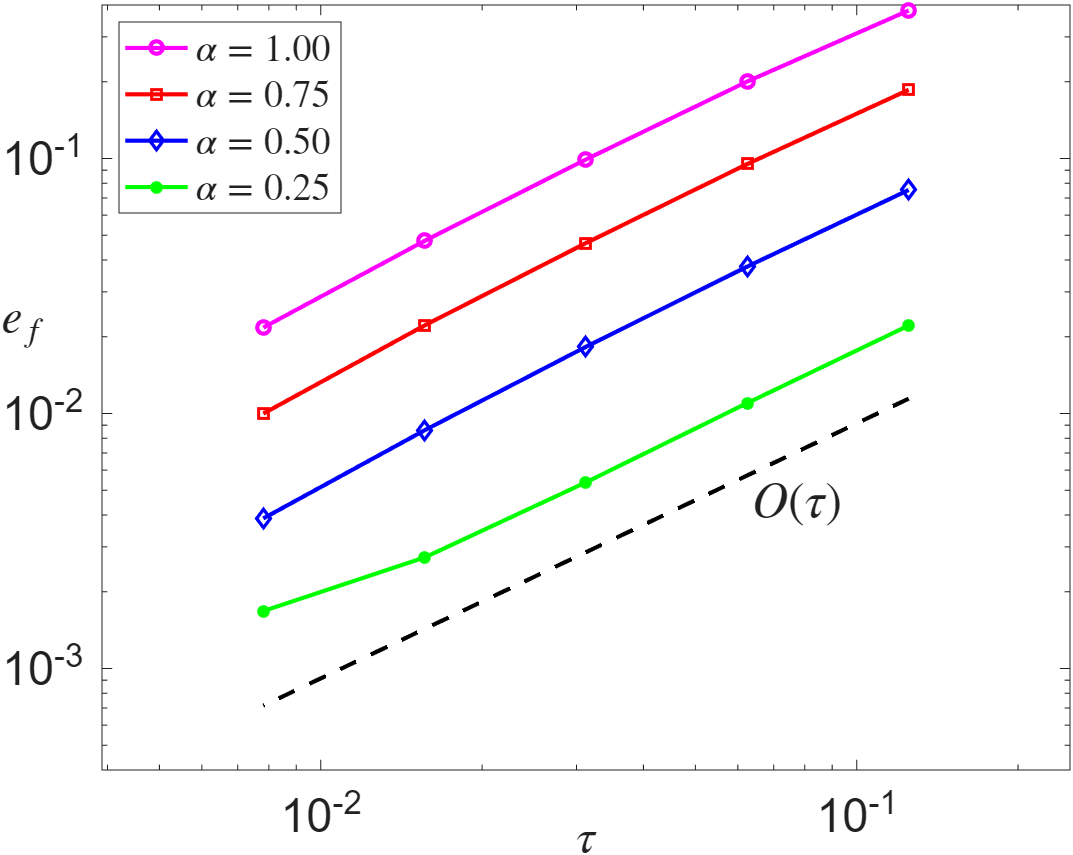}&
		\includegraphics[width=0.30\textwidth]{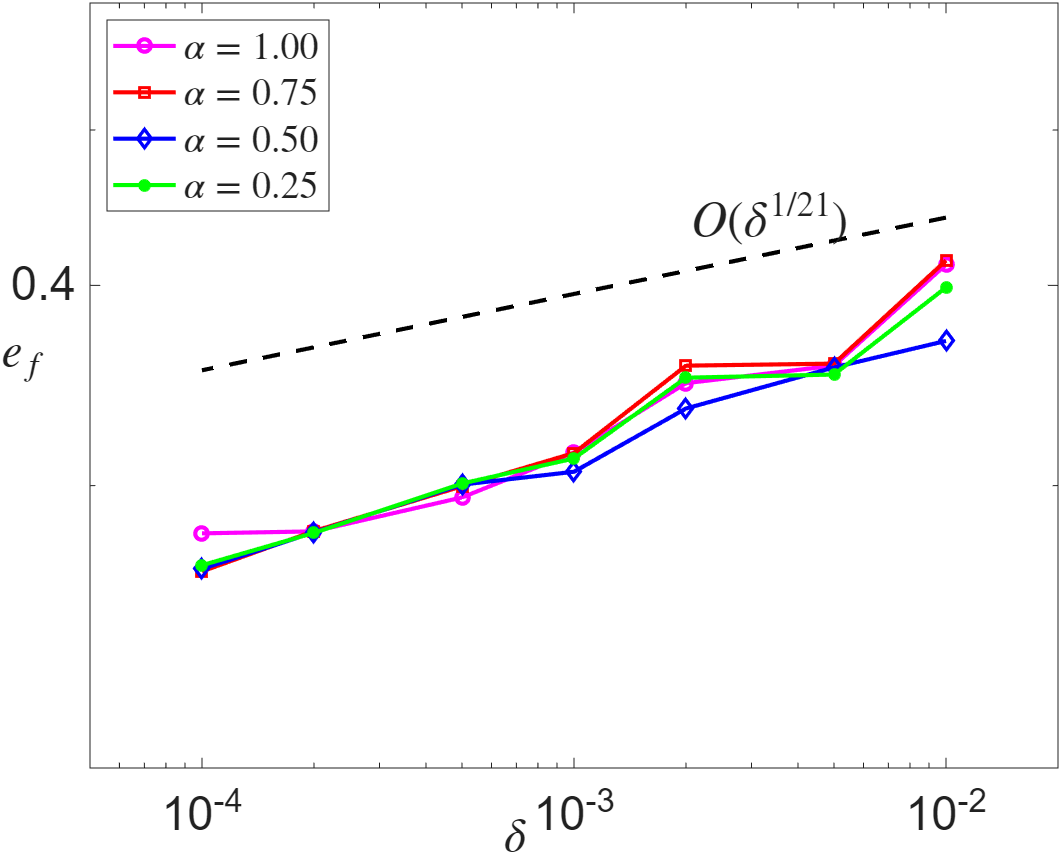}\\
		(a)  $e_f$ versus  $h$ & (b)  $e_f$ versus  $\tau$  & (c)  $e_f$ versus  $\delta$  
	\end{tabular}
	\caption{The convergence of $e_f$ with respect to $h$, $\tau$ and $\delta$ for Example  \ref{ex:nonsmooth2}.}
	\label{Fig:conv_nonsmooth2}
\end{figure}
\begin{figure}[htbp]
	\centering
	\begin{tabular}{cccc}
		\includegraphics[width=0.22\textwidth]{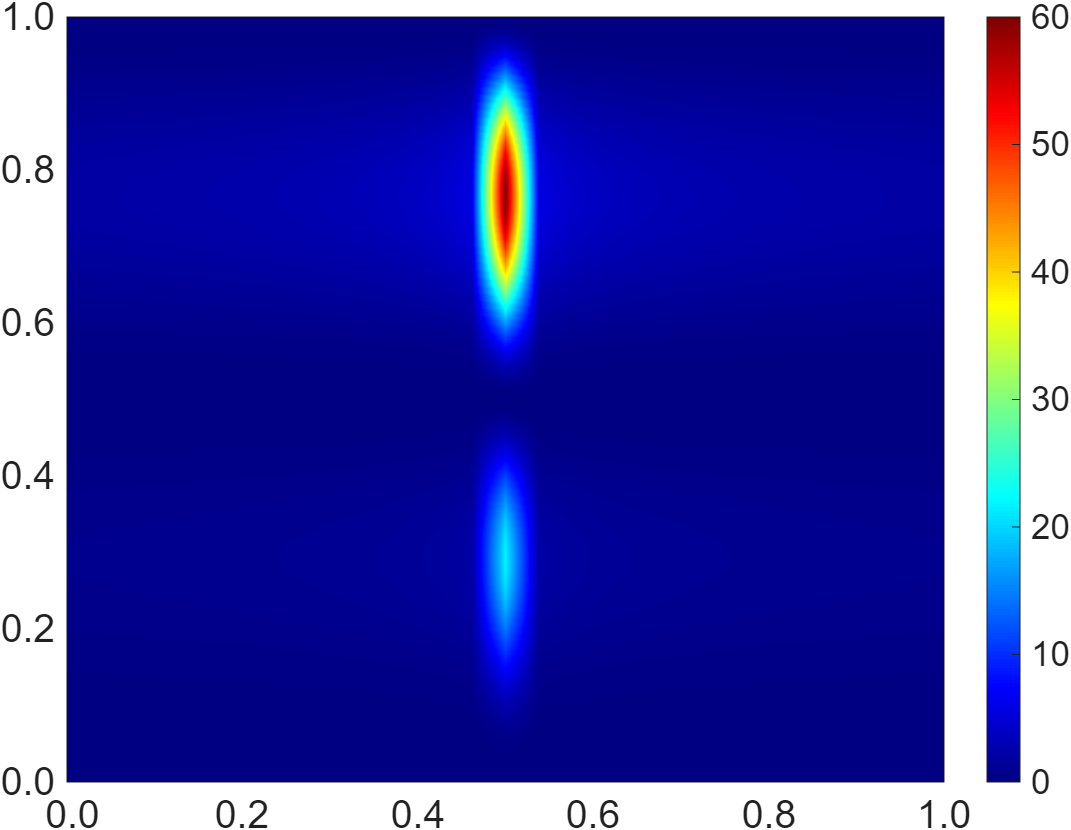}&
		\includegraphics[width=0.22\textwidth]{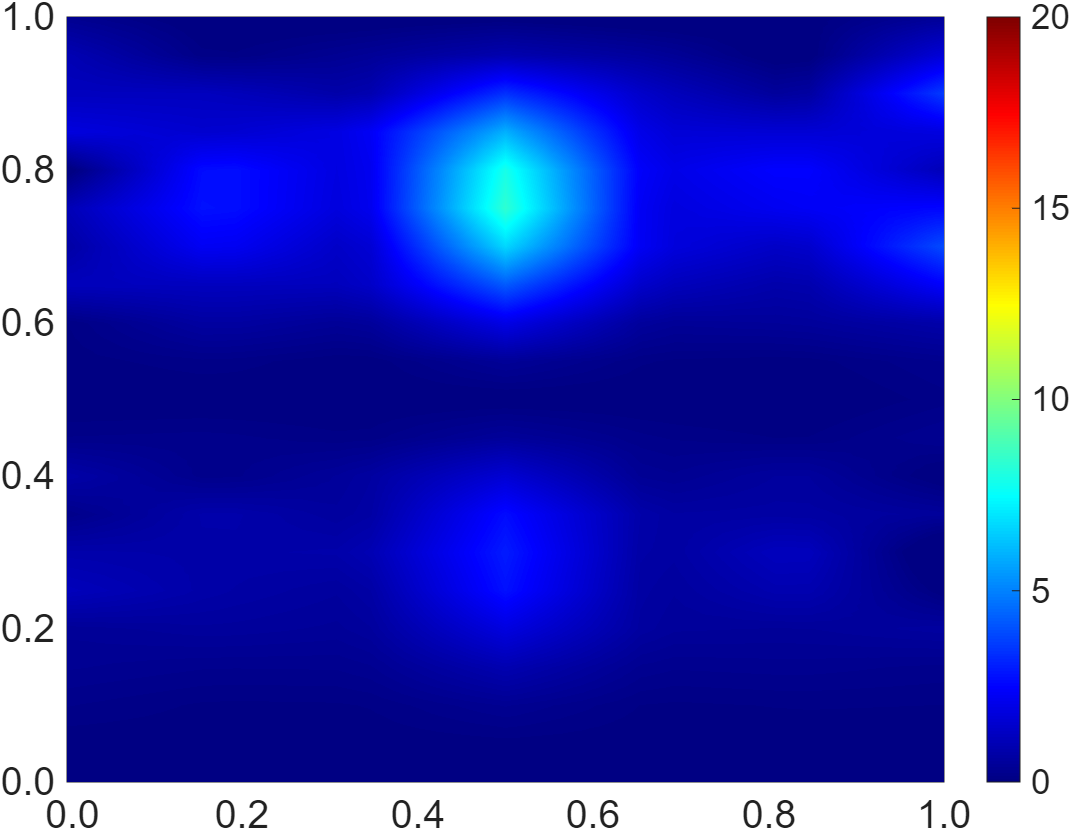}&
		\includegraphics[width=0.22\textwidth]{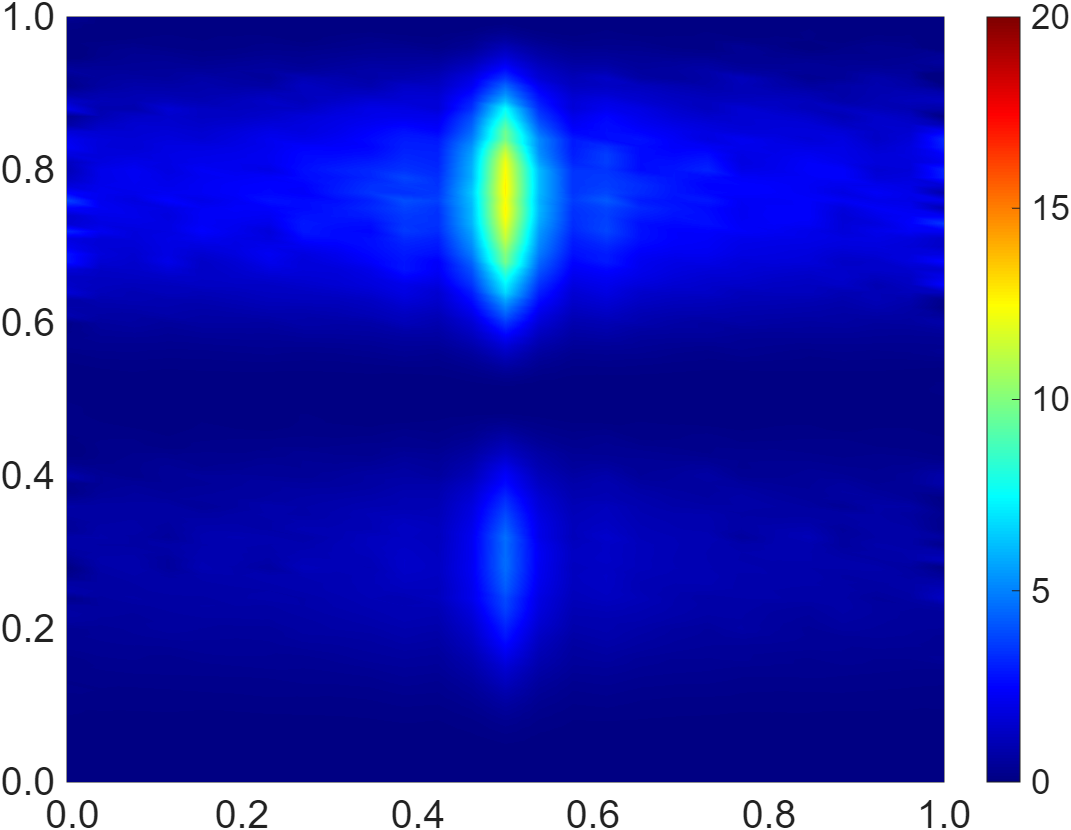}&
		\includegraphics[width=0.22\textwidth]{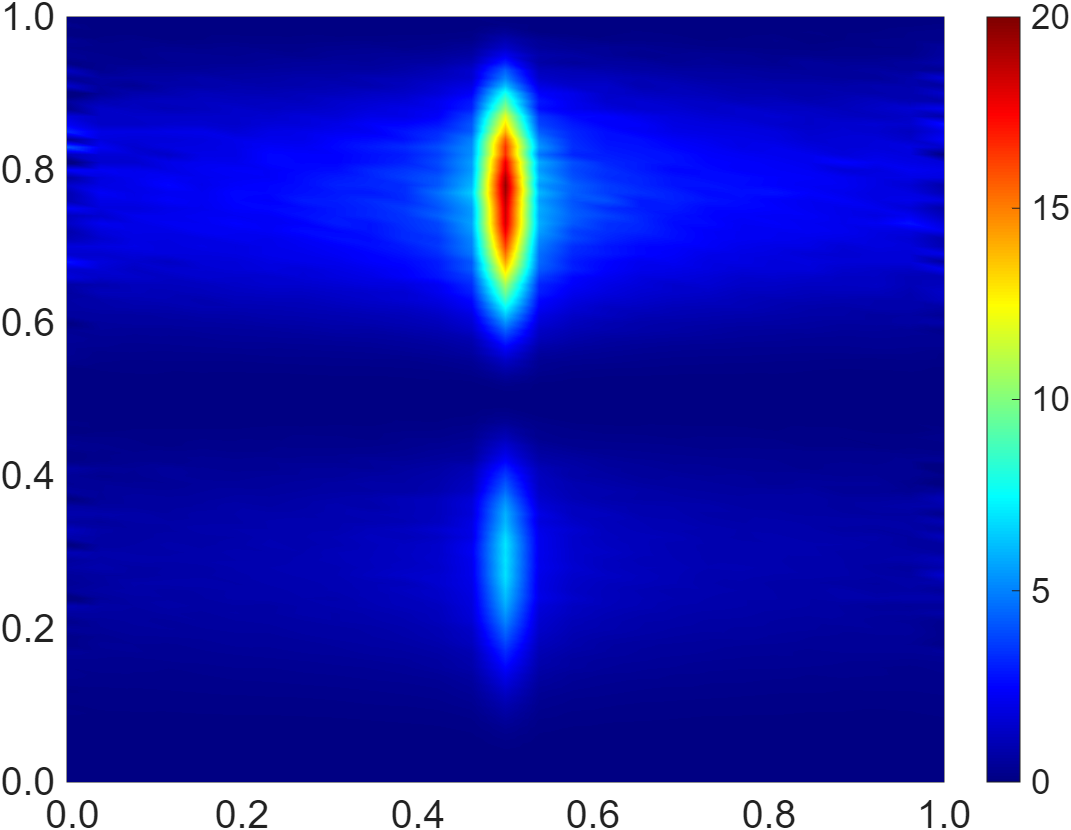}\\
		(a) exact & (b) $\delta=\text{1e-2}$  & (c) $\delta=\text{1e-3}$ & (d) $\delta=\text{1e-4}$  
	\end{tabular}
	\caption{The reconstruction $f^*$  for Example \ref{ex:nonsmooth2}  with $\alpha=0.75$ at different noise levels.}
	\label{Fig:recon_nonsmooth2}
\end{figure}

\section{Conclusion}\label{sec:conclu}
In this paper, we have investigated the reconstruction algorithm and error estimate for the inverse source problem of recovering the space-time varying component of the source on a cylindrical domain from lateral boundary measurement. The additional regularity of the solution along the cylinder's axis enables the construction of a contraction mapping. We have thoroughly analyzed the mapping in a fully discrete setting, using the finite element method in space and convolution quadrature generated by the backward Euler method in time. We proved  the convergence of the discrete reconstruction formula and derived explicit error bounds in terms of the discretization parameters $h$ and $\tau$, which provide  a guideline for choosing these parameters. We presented several numerical experiments that validate the theoretical analysis. 

There are several directions for future research. First, it is of interest to derive \emph{a posteriori} error estimates for the inverse source problem and to develop adaptive numerical algorithms and to provide relevant analysis. Second, the present framework is promising for extension to more challenging nonlinear inverse problems, e.g., the recovery of a space-time dependent potential from boundary measurements, for which establishing well-posed reconstruction schemes and sharp error estimates represents an important topic for further investigation.

\bibliographystyle{abbrv}
\bibliography{ref}

\end{document}